\newtheorem{theorem}{Theorem}[section]
\newtheorem{lemma}[theorem]{Lemma}
\newtheorem{proposition}[theorem]{Proposition}
\newtheorem{corollary}[theorem]{Corollary}
\newtheorem{definition}[theorem]{Definition}
\newtheorem{remark}[theorem]{Remark}
\def \bdry {\partial}
\def \smskip{\vskip 1em}
\newcommand{\CC}{{\mathbb C}}
\newcommand{\p}{\partial}
\newcommand{\bit}{\begin{itemize}}
\newcommand{\eit}{\end{itemize}}
\newcommand{\ben}{\begin{enumerate}}
\DeclareRobustCommand{\een}{ \end{enumerate} }
\title{Singular fibers in algebraic fibrations of genus 2 and their monodromy factorizations}
\author[S.\ Sakall{\i}]{S\"umeyra Sakall{\i}}
\email{ssakalli@uark.edu}\urladdr{https://sites.google.com/umn.edu/ssakalli/home}
\address{Department of Mathematical Sciences, University of Arkansas, Fayetteville, AR 72701, USA}
\author[Jeremy Van Horn-Morris]{Jeremy Van Horn-Morris}\email{jv002@uark.edu}\urladdr{https://jv002.hosted.uark.edu}
\address{Department of Mathematical Sciences, University of Arkansas, Fayetteville, AR 72701, USA}
\begin{document}
\begin{abstract} 
Kodaira's classification of singular fibers in elliptic fibrations and its translation into the language of monodromies and Lefschetz fibrations has been a boon to the study of 4-manifolds. In this article, we begin the work of translating between singular fibers of genus 2 families of algebraic curves and the positive Dehn twist factorizations of Lefschetz fibrations for a certain subset of the singularities described by Namikawa and Ueno in the 70s. We look at four families of hypersurface singularities in $\mathbb{C}^3$. Each hypersurface comes equipped with a fibration by genus 2 algebraic curves which degenerate into a single singular fiber. We determine the resolution of each of the singularities in the family and find a flat deformation of the resolution into simpler pieces, resulting in a fibration of Lefschetz type. We then record the description of the Lefschetz as a positive factorization in Dehn twists. This gives us a dictionary between configurations of curves and monodromy factorizations for some singularities of genus 2 fibrations. 
\end{abstract}
\maketitle

\section{Introduction}

In \cite{Kodaira}, Kodaira classified all singular fibers in pencils of elliptic curves, and showed that in such a pencil, each fiber is either an elliptic curve, a rational curve with a node or a cusp, or a certain sum of rational curves of self-intersections $-2$. Such singular fibers come with a bevy of descriptions used to characterize them: numerical algebraic invariants, plumbing trees, and the monodromy around the link of the singular fiber, to name a few.

Kodaira's program was generalized by many mathematicians to tackle various aspects of this problem in higher genera. Iitaka and Ogg \cite{Ogg} made the first attempts to characterize the singular fibers in genus 2 fibrations. Then Namikawa and Ueno \cite{NamikawaUeno-list, NamikawaUeno-long} gave classification of the plumbing description of the central fiber and the complex representative of the central fiber in Siegel upper half space of singular fibers in pencils of genus two curves. They counted 120 distinct types of central fiber. Horikawa also attacked the problem \cite{Horikawa1, Horikawa} and constructed a numerical invariant of the singular fibers in the fibration.

To be precise, let $\pi: X \rightarrow \mathbb{D}$ be a complex algebraic family of complete curves of genus two over a disc $\mathbb{D}=\{t \in \mathbb C, |t|<\epsilon\}$, where $X$ is a minimal, non-singular, complex analytic surface, and $\pi$ is smooth over the punctured disc $D'=\mathbb{D}-\{0\}$ and has isolated, algebraic singularities. Thus, for every $t \in D'$ the fiber $\pi^{-1}(t)$ is a compact non-singular curve (Riemann surface) of genus two, the restriction of $\pi$ to $D'$ is a smooth fiber bundle, and the central fiber is a singular complex algebraic curve. Such families of curves are called degenerating families of algebraic curves. 

Atomic singular fibers are defined as singular fibers that cannot be split by any perturbation of the degenerating families into fibrations with multiple (simpler) singular fibers. Xiao and Reid proposed the problem of determining all the atomic singular fibers \cite{Xiao, Reid}, and in the genus two case it is studied in \cite{Horikawa, Xiao, Persson, ArakawaAshikaga}. In particular, in \cite{ArakawaAshikaga} and \cite{ArakawaAshikagaII} Arakawa and Ashikaga study splittings of hyperelliptic pencils. They show that any degeneration of hyperelliptic curves of genus two splits into three types namely; a genus one nodal curve (type $0_0$), two $-2$ curves of genera 0 and 1 transversally intersecting each other at two points (class I), and two $-1$ curves of genera both 1 transversally intersecting each other at one point (class II) (see Corollary 3.4 in \cite{ArakawaAshikaga}). Their splitting result comes from the method for a Morsification of singularities using A'Campo-Gusein Zade theory \cite{ACampo, Gusein}, in which a perturbation method is added along with their resolution process.

One invariant that can be extracted from the singularity is its \emph{monodromy}, that of the fibration around the central fiber thought of as an element of the mapping class group of a chosen nearby reference fiber. This is the starting point of the direction taken by Matsumoto and Montesinos who approached the problem from the topological perspective. A homeomorphism $f: \Sigma_g \rightarrow \Sigma_g$ is called pseudo-periodic if it takes a set $\mathcal{C}$ of disjoint union of simple closed curves (called cut curves) to itself, and on the complement it is isotopic to a homeomorphism of finite order, i.e., a periodic map. All monodromies of isolated singular fibers of algebraic families are pseudo-periodic. 
In \cite{MatsumotoMontesinos-paper, MatsumotoMontesinos-book}, Matsumoto and Montesinos show that in fact the monodromy is sufficient to recover Namikawa and Ueno's classification, at least up to homeomorphism. They study the Namikawa-Ueno type fibrations of genus $g\geq 2$, and they prove the following: the topological types of minimal degenerating families of Riemann surfaces of genus $g \geq 2$, over a disk, which are nonsingular outside the origin, are in a bijective correspondence with the conjugacy classes in the mapping class group $MCG(\Sigma_g)$ represented by pseudo-periodic maps of negative twist. The correspondence is given by the topological monodromy. (In the $g=1$ case, Kodaira showed that the analogous correspondence is surjective but not injective \cite{Kodaira}.) 

Throughout the work on this problem, mathematicians have worked to get better and stronger information about the singularity and the fibration. One of the more recent attempts at this comes from the work of Matsumoto building on his work with Montesinos and attempting to understand one of the singularities presented by Namikawa and Ueno. In \cite{Matsumoto-splitting} Matsumoto studies the singular fibration coming from a certain involution on $\Sigma_2$. The ultimate goal is to understand the deformation of this into a \emph{Lefschetz fibration}, the preferred description for smooth and symplectic topology \cite{Donaldson} and to present the corresponding positive factorization. He does this by an explicit deformation and uses computer calculations to compare the singular fibers, giving the positions of the corresponding vanishing cycles. In order to do this, rather than working with the initial holomorphic fibration he constructs a topologically equivalent one whose complex structures are not necessarily the same and works instead with that. This is particularly relevant to our work both in that we utilize related computer calculations but also do not address this particular example in this paper. 

In \cite{Ishizaka1}, Ishizaka begins with the ideas presented in \cite{MatsumotoMontesinos-paper}, ideas based in Nielsen and the topology of surfaces diffeomorphisms, to write down Dehn twist factorizations of all the finite order homeomorphisms of closed genus $g \geq 2$ surfaces. Indeed, these methods can be applied in all genera for hyperelliptic periodic maps which admit hyperelliptic deformations to Lefschetz fibrations (See also \cite{Ishizaka2}). Ishizaka in fact makes initial progress in determining the Lefschetz fibrations associated to some of the Namikawa-Ueno singularities, calculating the corresponding factorization corresponding to three of our examples (on the closed surface): $\phi_1, \phi_2$ and $\phi_3$, but addressing them in all genus. While not made explicit in the paper, the method presented there is essentially equivalent to the method we use in the paper here. By restricting to these three cases Ishizaka deals with only smooth surfaces or equisingular deformations and so Ishizaka's work avoids the need to invoke Laufer's work. 

In this paper, we follow a very similar outline of what Ishizaka does, considering the hyperelliptic quotient and the deformation of the branched curve inside $\CC^2$. Because we want  to deform the fibration itself, we have to take much more care in ensuring that we construct a flat deformation of the resolution of the explicit singularities in question. We restrict to the affine case and consider the polynomials used by Namikawa and Ueno in the subfamily of periodic homeomorphisms. First we construct explicit resolutions of all such singularities, then we construct deformations of the singular fibrations into Lefschetz fibrations, taking care to give explicit Dehn twists factorizations along with identifications of the reference fiber. We invoke Laufer to show that such deformations are flat. Our techniques are similar to Ishizaka in that we make extensive use of the hyperelliptic involution and pay considerable attention to the braid description of the branch locus. Indeed, there are branched coverings throughout the work. Finally, due to such care, these deformations are actually deformations of the underlying complex fibration on the resolution, and so one can read our theorem both as a characterization of the symplectic fibrations that underlie these algebraic families and the symplectic structures of the underlying algebraic surfaces which are more easily accessed via the plumbing diagrams that we construct in the resolutions of the singularities. 

These deformations give decompositions of the singularities into their atomic pieces. This process of deforming the singular fibration into a Lefschetz fibration is coined by Reid as Morsification \cite{Reid}, and so we give the explicit Morsifications of these singularities. This is natural in the symplectic setting as well, where Lefschetz fibrations are closely tied to both smooth Morse functions (through handle decompositions) and complex Morse functions (through the theory of Stein spaces).

In \cite{SS} and \cite{AS}, Akhmedov and the first named author worked with closed manifolds which are the total spaces of algebraic fibrations over $S^2$ with two singular fibers, say, $F_1, F_2$, where the singular fibers are in the list of \cite{NamikawaUeno-list}. For each pair of singular fibers in these algebraic fibrations, we reconstructed one fiber $F_1$ in a geometric way by finding the corresponding pencils of complex curves of genus two inside Hirzebruch surfaces. By blowing up these pencils we obtained specific types of Namikawa-Ueno's genus two singular fibers and sections precisely. In addition to constructing these singularities geometrically, we also introduced 2-nodal spherical deformations, by which we perturbed the dual fibers $F_2$ in the algebraic fibrations over $S^2$. Then by using them along with the symplectic surgeries (symplectic blow ups, symplectic resolutions, and generalized rational blowdowns) we constructed minimal symplectic 4-manifolds that are exotic copies of $\mathbb{CP}^2 \# 6\overline{\mathbb{CP}}^{2}$, $\mathbb{CP}^2 \# 7\overline{\mathbb{CP}}^{2}$, and $3\mathbb{CP}^2 \# k\overline{\mathbb{CP}}^{2}$ for $k=16, ..., 19$.

In this paper we consider algebraic families of genus 2 curves with one or two boundary components over $D^2$, and having one singular fiber. We construct the singular fibers and show that the algebraic fibrations we work with split into Lefschetz fibrations via a deformation of the complex structure. We find the monodromy factorizations of the total spaces that determine these Lefschetz fibrations, that is to say we determine the symplectic deformation types of the singular fibrations. We have also constructed 13 types of the Namikawa-Ueno fibers (Sections \ref{phi_1s} and \ref{phi_3s}), in a way that is motivated by, but different from, what was done in \cite{NamikawaUeno-long} and also different than our method in \cite{AS, SS}. 
To put in more detail, we work with polynomials $f(x,y,t)$ in $\mathbb{C}^3$ and we denote their zero sets by $V(f)$. Each of these algebraic varieties $V(f)$ has a fibration by algebraic curves with one singular fiber, more complicated than a Lefschetz singularity, and its generic fibers are smooth genus two curves with one or two boundary components. In general $V(f)$ is a singular variety which we resolve. We call the resolution space $X_f$. The fibration lifts to $X_f$ and the singular fiber lifts to its resolution graph. 

The purpose of the paper is to establish the correspondence between singular fibers of genus 2 algebraic fibrations, often thought of as configurations of complex curves via their resolution graphs, and their monodromy descriptions as Lefschetz fibrations.

\begin{theorem} \label{thm:main1}
The resolution $X_f$ of a singular algebraic variety $V(f)$ where $f$ is 
\begin{enumerate}
\item[i)]  $y^2 - x^5 - t^k$, $k = 1,\dots,10$,
\item[ii)]   $y^2 - x^6 - t^k$, $k = 1,\dots,6$,
\item[iii)]   $y^2 - x(x^4+t^k)$, $k = 1,\dots,8$,
\item[iv)]   $y^2 - x(x^5+t^k)$, $k = 1,\dots,10$,
\end{enumerate}
admits an (algebraic) fibration by hyperelliptic curves where all curves are smooth except when $t=0$. The central fiber is shown in Figures~\ref{phi1}, ~\ref{phi12} ~\ref{phi1ktable}, ~\ref{phi3}, ~\ref{phi3ktable}, \ref{phi2}, \ref{phi2ktable}, and \ref{phi4ktable}.
\end{theorem}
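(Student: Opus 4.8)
The plan is to read each $V(f)$ as a double branched cover and to resolve it in a way that keeps the projection to the $t$-axis visible throughout. Writing $f = y^2 - p(x,t)$, the variety $V(f) \subset \mathbb{C}^3$ is the double cover of the $(x,t)$-plane $\mathbb{C}^2$ branched along the plane curve $B = \{p(x,t) = 0\}$, with the hyperelliptic involution $y \mapsto -y$, and the fibration is the composite of this cover with the projection $(x,t) \mapsto t$. First I would confirm the generic behavior: for each family I would check that $p(x,t_0)$ has simple roots for every $t_0 \neq 0$ (e.g. $x^5 = -t_0^k$ gives five distinct roots, and likewise for the other three cases), count the branch points of the induced double cover of $\mathbb{P}^1$ including the behavior at infinity, and apply Riemann--Hurwitz to confirm genus two. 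This shows $\pi$ is smooth over $\mathbb{D}_t \smallsetminus \{0\}$.

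Next I would locate the singular locus of $V(f)$. Since $f(0,0,0)=0$ and all first partials vanish at the origin, the origin is singular; computing the Jacobian ideal shows it is the only singular point, an isolated hypersurface singularity whose type is governed by the Newton data of $p$ (for instance $x^5+t^k$, $x^6+t^k$, $x^5+xt^k$, $x^6+xt^k$). The four families differ precisely in these Puiseux/Newton data, so the resolutions must be carried out case by case.

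The heart of the argument is the construction of the resolution $X_f$, which I would perform through the branch locus rather than by blowing up $\mathbb{C}^3$ directly. I would first produce an embedded resolution of the plane-curve singularity $B \subset \mathbb{C}^2$ by a sequence of (weighted/toric) blow-ups, arriving at a surface $\tilde S \to \mathbb{C}^2$ on which the total transform of $B$, together with the strict transform of the central fiber $\{t=0\}$, is a normal-crossings divisor. I would then take the double cover of $\tilde S$ branched along the reduced even part of this divisor and normalize, blowing down any superfluous $(-1)$-curves to obtain the minimal resolution $X_f$; the involution and the fibration both lift to $X_f$ by construction. To read off the central fiber I would track how each exceptional $\mathbb{P}^1$, say $E_i \subset \tilde S$, lifts: if $E_i$ is disjoint from the branch locus its preimage splits into two disjoint copies, while if the branch divisor meets $E_i$ its preimage is a single curve whose genus I compute via Riemann--Hurwitz from the number of branch points on $E_i$, with self-intersections transforming by the usual factor. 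Assembling these components with their genera, self-intersections, and intersection pattern yields the plumbing graphs displayed in Figures~\ref{phi1}--\ref{phi4ktable}.

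The main obstacle is the multiplicity bookkeeping in the last two steps. For each blow-up in the embedded resolution of $B$ one must determine whether the total transform of $B$ acquires even or odd multiplicity along the new exceptional curve, since this decides whether the double cover is branched there and hence whether normalization creates new singularities demanding further blow-ups. Getting these parities, the resulting self-intersection numbers, and the component genera exactly right---so that the computed plumbing diagram agrees with the Namikawa--Ueno configurations in each figure---is the delicate and computationally intensive part of the proof, and it is where the four families genuinely diverge; the verification of generic smoothness and of the singular locus is comparatively formal.
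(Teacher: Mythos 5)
Your proposal is correct in outline and, for families (iii) and (iv), is essentially the paper's own argument: the paper resolves the branch curve $\{p(x,t)=0\}$ in the $(x,t)$-plane by blow-ups and then takes the double cover with $N=2$ via N\'emethi's algorithm, which is precisely the combinatorial packaging of your ``double cover branched over the reduced even part of the total transform, normalize, blow down'' recipe, including the parity bookkeeping and the $\gcd$/Riemann--Hurwitz rules for how each exceptional $\mathbb{P}^1$ lifts. The genuine divergence is in families (i) and (ii): there the paper does \emph{not} use the hyperelliptic double cover of the $(x,t)$-plane. Instead it views $V(y^2-x^{5}-t^k)$ (resp.\ $x^6$) as the degree-$k$ cyclic cover of the $(x,y)$-plane branched over the fixed curve $y^2=x^5$ (resp.\ $y^2=x^6$), so a single embedded resolution tree $\mathcal{T}$ (resp.\ $\mathcal{D}$) serves all ten (resp.\ six) values of $k$, with $k$ entering only as the covering degree $N$ in N\'emethi's algorithm. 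Your route would instead require a fresh embedded resolution of the plane curve $x^5+t^k=0$ for each $k$; it is valid and gives the same minimal graphs, but at the cost of redoing the (delicate, as you note) multiplicity bookkeeping sixteen extra times. One small correction: your claim that ``all first partials vanish at the origin'' fails for $k=1$ in families (i) and (ii), where $\partial f/\partial t=-1$ and $V(f)$ is already smooth (the paper observes $X_f=V(f)=\CC^2$ there, and the central fiber is just the cuspidal curve itself); your framework still covers these cases, since the resolution step is vacuous, but the blanket statement about the singular locus should be restricted to the cases where the origin is actually a critical point of $f$.
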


\begin{theorem} \label{thm:main2}
The genus 2 fibration on the resolution $X_f$ of a singular algebraic variety $V(f)$ where $f$ is 
\begin{enumerate}
\item[i)]  $y^2 - x^5 - t^k$, $k = 1,\dots, 6,8,9,10$,
\item[ii)]   $y^2 - x^6 - t^k$, $k = 1,2,4,5,6$,
\item[iii)]   $y^2 - x(x^4+t^k)$, $k = 1,\dots,8$,
\item[iv)]   $y^2 - x(x^5+t^k)$, $k = 1,\dots,10$,
\end{enumerate}
splits into a Lefschetz fibration described by one of the following positive words in the mapping class group of the genus 2 surface with either 1 or 2 boundary components:
\begin{center}
$\phi_1, \phi_2, \phi_3, \phi_4, \phi_{\tilde{2}}, \phi_{\tilde{4}}, I, \phi_A, \phi_B, \phi_1^2, \phi_1^3, \phi_2^2, \phi_3^2, \phi_4^2, \phi_{\tilde{2}}^2, \phi_{\tilde{4}}^2, \phi_1 I, \phi_2 I , \phi_{\tilde{2}} I , \phi_{\tilde{4}} I, \phi_3\phi_A, \phi_4\phi_B, \tau_{\bdry}$
\end{center}
where \begin{itemize}
\item $\phi_1 = \tau_1\tau_2\tau_3\tau_4$, 
\item $\phi_2 = \tau_1\tau_1\tau_2\tau_3\tau_4$, 
\item $\phi_{\tilde{2}} = \tau_5 \tau_{5'} \tau_4 \tau_3 \tau_2$,
\item $I$ is the hyperelliptic involution on the genus two surface with factorization $I = \tau_1\tau_2\tau_3\tau_4\tau_5\tau_{5'}\tau_4\tau_3\tau_2\tau_1$,
\end{itemize}
for the surface with one boundary component, and for the surface with two boundary components we have,
\begin{itemize}
\item $\phi_3 = \tau_1\tau_2\tau_3\tau_4\tau_5$, 
\item $\phi_4 = \tau_1\tau_1\tau_2\tau_3\tau_4\tau_5$, 
\item $\phi_{\tilde{4}} = \tau_{5'}\tau_5\tau_4\tau_3\tau_2\tau_1$
\item $\phi_A = \tau_1 \tau_4 \tau_3 \tau_{a_1} \tau_2 \tau_5 \tau_1 \tau_4 \tau_{b_1} \tau_{b_1'}$
\item $\phi_B = \tau_4 \tau_{a_2} \tau_3 \tau_5 \tau_2 \tau_4 \tau_{b_2} \tau_{b_2'}$, 
\end{itemize} 
and $\tau_\bdry$ stands for the boundary (multi-)twist on the surfaces $\Sigma_{2,1}$ and $\Sigma_{2,2}$ (whichever happens to be under consideration).
\end{theorem}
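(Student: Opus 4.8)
The plan is to use the hyperelliptic structure at every stage. Since each fiber is $y^2 = p_t(x)$ for a polynomial $p_t$ in $x$ (with $p_t = x^5+t^k$, $x^6+t^k$, $x(x^4+t^k)$, or $x(x^5+t^k)$), the fiber is the double cover of the $x$-line branched over the zeros of $p_t$, and the whole variety $V(f)$ is the double cover of $\CC^2$ with coordinates $(x,t)$ branched along the plane curve $C=\{p_t(x)=0\}$; the genus-$2$ fibration is just the pullback of the projection $(x,t)\mapsto t$. The first step is therefore to read off, for each family, the finite branch points as multivalued functions of $t$ and to identify the plane-curve singularity of $C$ at the origin — e.g. $C=\{x^5+t^k=0\}$ for family (i), while for (iii) and (iv) one branch point stays pinned at $x=0$ and the rest sweep out the roots of $x^4=-t^k$ or $x^5=-t^k$. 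Because genus $2$ forces six branch points in the projective model, one must track the point $x=\infty$: the number of boundary components of the affine fiber equals the number of points of the smooth model over $\infty$, which is one when $\deg_x p_t$ is odd (families (i), (iii), giving $\Sigma_{2,1}$) and two when it is even (families (ii), (iv), giving $\Sigma_{2,2}$). This parity is exactly what partitions the statement into its one- and two-boundary cases.

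Second, I would Morsify the branch curve in the sense of A'Campo and Gusein-Zade: perturb $p_t$ to $p_{t,\epsilon}$ so that its $x$-discriminant has only simple zeros in $t$. Over each such simple critical value exactly two branch points collide transversally, the double cover acquires a single ordinary node, and the degeneration there is an honest Lefschetz singularity whose vanishing cycle is the preimage of the embedded arc joining the two colliding points. Ordering the critical values of $t$ by argument and recording, for each, the half-twist that swaps the two corresponding strands produces a braid-monodromy factorization of the perturbed branch configuration; lifting each half-twist to the Dehn twist about the circle covering its arc converts this into a positive word in the hyperelliptic mapping class group. This is where the generators $\tau_1,\dots,\tau_5,\tau_{5'},\tau_{a_i},\tau_{b_i}$ must be pinned down on a single marked reference fiber: each $\tau_i$ is the lift of an arc between consecutive branch points, while the primed curve $\tau_{5'}$ and the curves $\tau_{a_i},\tau_{b_i}$ are the lifts of the remaining arcs in the chosen cut system (in particular those involving the point(s) at infinity that bound the surface).

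Third, I would verify that this is an actual flat deformation of the complex fibration on $X_f$, not just a homeomorphism. Here one transfers the perturbation through the resolution computed in Theorem~\ref{thm:main1}: the embedded resolution of $C$ together with the canonical resolution of the double cover yields $X_f$ and its central configuration, and one checks that $p_{t,\epsilon}$ induces a deformation of the resolved total space whose nonzero-$\epsilon$ fibers are the nodal curves just described. Laufer's criterion is then invoked to guarantee that the resulting family is flat, so that the Morsification genuinely deforms the algebraic surface and its fibration rather than merely the abstract monodromy — this is the extra care beyond Ishizaka's equisingular setting that the introduction emphasizes.

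Finally, I would carry out the identification family-by-family and $k$-by-$k$, matching each computed positive word against the list $\phi_1,\phi_2,\dots,\tau_{\bdry}$ and checking the boundary bookkeeping, namely that the product of the lifted twists reproduces the original pseudo-periodic monodromy around $\bdry\DD$; chain relations such as $(\tau_1\tau_2\tau_3\tau_4)^5=\tau_{\bdry}$ on $\Sigma_{2,1}$ (and its analogue for the five-twist chain on $\Sigma_{2,2}$) then explain the pure-boundary entries like $\tau_{\bdry}$ and force the various powers $\phi_1^2,\phi_1^3,\phi_2^2,\dots$ to appear in the prescribed ranges of $k$. I expect the main obstacle to be precisely this explicit computation: maintaining one consistent marking of the reference genus-$2$ surface with its boundary while tracking the braid monodromy of five or six moving branch points, and then confirming that the ordered product of lifted Dehn twists collapses to exactly the short advertised words. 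Following Matsumoto and Ishizaka, computer assistance appears essentially unavoidable here. I further expect that the exceptional values omitted in Theorem~\ref{thm:main2} — $k=7$ in family (i) and $k=3$ in family (ii) — are exactly the cases where the naive Morsification does not separate the singularity into the standard atomic Lefschetz pieces, so that the monodromy is genuinely pseudo-periodic with nontrivial cut curves and a different argument is required.
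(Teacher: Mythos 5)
Your overall architecture --- hyperelliptic double cover of the $(x,t)$-plane branched over $\{p_t(x)=0\}$, braid-monodromy of the perturbed branch curve lifted to Dehn twists, Laufer's criterion for flatness, then case-by-case matching --- is exactly the paper's. But there is a genuine gap at the central step. You propose to perturb $p_t$ so that its $x$-discriminant has \emph{only simple zeros}, i.e.\ the full A'Campo--Gusein-Zade Morsification, and then to ``verify'' flatness by Laufer. For most of the listed cases that verification \emph{fails}: the fully generic Morsification is a deformation of the singular variety $V(f)$ (a smoothing), not of its resolution $X_f$, and $K_s^2$ jumps. The paper makes this explicit for $y^2=x^5+t^4$: the resolution has $b_2=2$, while the fully Lefschetz word $(\tau_1\tau_2\tau_3\tau_4)^4$ coming from the generic perturbation has $b_2=12$ and is not even negative definite, so Laufer rules it out rather than confirming it. The same failure occurs for $k=4,8,9$ in family (i), $k=4$ in family (ii), $k=3,4,6,7$ in family (iii), and $k=3,4$ in family (iv). Your proposal has no mechanism for recovering from this, and the word lengths you would compute (equal to the Milnor number of the branch-curve perturbation) are simply wrong in these cases.

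The missing idea is that the deformations must be chosen \emph{non-generically} so that $K^2$ (equivalently, $\chi$ and $\sigma$ of the total space, via Gompf's $d_3$ formula) matches that of the resolution. Concretely, the paper deforms so as to split off compound sub-singularities of the form $x^4+t^4$, $x(x^3+t^3)$, $x^3+t^6$, $x(x^2+t^4)$, whose double covers resolve to curves of genus one; their monodromy contributions are the separating pairs $\tau_5\tau_{5'}$ or boundary (multi)twists rather than long chains of nonseparating twists, and these are established separately by topological arguments (Section on lower genus boundary twists) before being assembled into words like $\phi_{\tilde 4}=\tau_5\tau_{5'}\tau_4\tau_3\tau_2\tau_1$. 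Your closing speculation that only the omitted cases $k=7$ (family (i)) and $k=3$ (family (ii)) exhibit this obstruction is therefore off the mark: the obstruction is pervasive, and handling it --- not the bookkeeping of markings --- is the actual content of the proof. A secondary, smaller omission: you would also need the rotation lemmas (that $\phi_i^k(\phi_i)$ is Hurwitz equivalent to $\phi_i$, etc.) to reconcile the conjugated factorizations arising from different choices of reference paths when a singularity splits into several pieces; without them the assembled words do not collapse to the short forms in the statement.
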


Here, we consider factorizations in the mapping class group $$\mathrm{MCG}(\Sigma_{2,\epsilon}) := \mathrm{Diff}^+(\Sigma_{2,\epsilon}, \bdry  \Sigma_{2,\epsilon})/\text{isotopy rel boundary},$$ where $\epsilon = \text{1 or 2}$ is the number of boundary components of the generic fiber. The Dehn twists $\tau_1, \cdots, \tau_5, \tau_{5'}$ are the standard generators of the hyperelliptic subgroup of the mapping class group of the genus two surface as shown in Figure \ref{fig:gens}. The curves $a_1$, $b_1$ and $b_1'$ from $\phi_A$ are shown in Figure~\ref{fig:phiAmonodromy} and the curves $a_2$, $b_2$ and $b_2'$ from $\phi_B$ are shown in Figure~\ref{fig:phiBmonodromy}. The monodromies $\phi_i$, $i=1,2,3,4$ are roots of the boundary multitwist of orders $10,8,6,5$ respectively.

\begin{remark}The apparent typos in the statement of Theorems~\ref{thm:main2} are deliberate omissions. There are two cases that we do not address in this theorem: $\phi_1^7, \phi_3^3$. The latter is related to the fibration constructed by Matsumoto \cite{Matsumoto-splitting} and the fibration on $y^2 = x^6 + t^3$ corresponds with some lift of Matsumoto's factorization to the genus 2 surface with 2 boundary components.\end{remark}

In genus 1, the dictionary between Kodaira's configurations of curves and monodromy factorizations was established by Harer-Kas-Kirby \cite{HarerKasKirby}. This has been hugely important to later efforts to produce exotic rational surfaces. Typical rational blowdown methods use configurations of curves and sections as constructed via factorizations of the monodromy of Lefschetz fibrations on starting manifolds like the elliptic surface $E(1)$, fibers like the $I_n$ and fishtail fibers in genus one fibrations (see e.g. \cite{PSS, StipSza, Ak}). 

Our theorem should be thought of as establishing this dictionary in genus 2. Given a Lefschetz fibration, finding a subword listed in Theorem~\ref{thm:main2} (or possibly more than one) shows us that we can find the corresponding configuration of curves in the manifold. 
For symplectic or smooth surgery constructions, one needs the knowledge of configurations of symplectic curves inside the starting 4-manifold. The starting point for many rational blowdown constructions of exotic rational surfaces is exactly the other language in the dictionary: monodromy factorizations. Begin with a known Lefschetz fibration on a useful manifold, use the factorizations of known singular fibers to produce specific configurations of curves, then use them in surgeries.

\begin{theorem} \label{thm:3} Given a Lefschetz fibration, a subfibration corresponding to a word in Theorem~\ref{thm:main2} contains the corresponding configuration of curves from Theorem~\ref{thm:main1}, and moreover we can deform the compatible symplectic structure to one in which the configuration of curves is symplectic.
\end{theorem}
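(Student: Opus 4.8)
The plan is to treat Theorem~\ref{thm:3} as the symplectic-topological repackaging of the flat deformations already constructed for Theorem~\ref{thm:main2}, combined with the standard principle that holomorphic curves are symplectic. First I would extract the relevant subfibration. If the monodromy factorization of the given Lefschetz fibration contains one of the words $\phi$ of Theorem~\ref{thm:main2} as a subword, then after a sequence of Hurwitz moves bringing the corresponding positive Dehn twists into a consecutive block, one obtains a subdisk $D_\phi$ in the base over which the Lefschetz fibration restricts to a Lefschetz fibration with monodromy exactly $\phi$. Call the preimage $Y_\phi$, equipped with the symplectic form $\omega_\phi$ compatible with the fibration. By the standard uniqueness of symplectic Lefschetz fibrations with prescribed monodromy factorization (Kas, Gompf), $Y_\phi$ is symplectic deformation equivalent to the model Lefschetz fibration $W_\phi$ produced in the proof of Theorem~\ref{thm:main2}.

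Next I would invoke the deformation itself. By Theorem~\ref{thm:main2} and its proof, $W_\phi$ arises as a flat deformation of the resolved singular fibration on $X_f$, where $f$ is the corresponding polynomial of Theorem~\ref{thm:main1}. Because this is a genuine \emph{flat} deformation of the underlying complex fibration on the single smooth four-manifold $X_f$ (as emphasized in the introduction and guaranteed by Laufer's theorem), it provides a path of complex structures $J_s$, $s\in[0,1]$, on $X_f$ with $J_1$ the Lefschetz structure and $J_0$ the structure whose central fiber is that of Theorem~\ref{thm:main1}. Each $J_s$ is K\"ahler, so there is an accompanying path of symplectic forms $\omega_s$ with $\omega_1=\omega_\phi$. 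The configuration of curves $C$ of Theorem~\ref{thm:main1} is precisely the resolution graph sitting inside $X_f$ as the $J_0$-holomorphic central fiber; since $C$ is then a union of compact complex curves meeting in the positive transverse intersections recorded by the plumbing diagram, it is $\omega_0$-symplectic (each node is locally modeled on $\{xy=0\}$, which is holomorphic and hence locally symplectic).

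It then remains to transport this back to the given fibration. Under the identification $Y_\phi\cong W_\phi = X_f$, the configuration $C$ is a smooth submanifold of $Y_\phi\subset Y$, and the path $\omega_s$ deforms the compatible form $\omega_\phi=\omega_1$ to the form $\omega_0$ for which $C$ is symplectic. Since the deformation is supported on $Y_\phi$, one can arrange it relative to a collar of $\bdry Y_\phi$ and extend by the unchanged form on the remainder of $Y$, yielding a deformation of the compatible symplectic structure on $Y$ in which the configuration of curves of Theorem~\ref{thm:main1} is symplectic. This is exactly the assertion of Theorem~\ref{thm:3}.

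I expect the main obstacle to be the book-keeping in the identification and localization steps rather than any new analysis. One must check that the normal form supplied by Kas and Gompf genuinely matches the subword $\phi$ up to the allowed Hurwitz and global conjugation moves, and that the flat family of symplectic forms can be localized on $Y_\phi$ and glued to the ambient $\omega$ without destroying symplecticity near $\bdry Y_\phi$. The genuinely hard content, namely that the deformation from the singular fibration to the Lefschetz model is flat, so that the two structures live on the same smooth four-manifold and are connected by K\"ahler forms, has already been established in Theorems~\ref{thm:main1} and~\ref{thm:main2} via Laufer's theorem; Theorem~\ref{thm:3} is its symplectic-topological consequence.
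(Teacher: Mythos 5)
Your overall architecture matches the paper's: reduce to the model fibration via Gompf's compatibility/uniqueness results for Lefschetz fibrations (generalizing Thurston), use the flat deformation from Theorems~\ref{thm:main1} and \ref{thm:main2} to place the configuration of curves and the Lefschetz structure on the same smooth four-manifold, and then observe that a configuration of holomorphic curves is symplectic for any K\"ahler form weakly compatible with the complex structure. That reduction is exactly how the paper frames the proof.

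However, there is a genuine gap at the step where you write ``Each $J_s$ is K\"ahler, so there is an accompanying path of symplectic forms $\omega_s$.'' This is precisely the point the paper identifies as the actual content of the theorem, and it is not automatic: the resolution $X_f$ is a \emph{non-compact} complex surface containing compact holomorphic curves (the exceptional configuration), so it is neither affine nor Stein, and you cannot obtain a K\"ahler form by restricting the standard form from $\CC^N$; nor does mere existence of a complex structure on an open $4$-manifold produce a compatible symplectic form. The paper supplies the missing argument by completing the affine hypersurface to a singular projective hypersurface in $\mathbb{CP}^3$, resolving all singularities (including the possibly complicated ones on the $\mathbb{CP}^2$ at infinity) while choosing the resolution over the affine chart to be the one built in Section~\ref{Res}, and thereby embedding $X_f$ into a smooth projective, hence K\"ahler, surface. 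Your proof needs this (or an equivalent $1$-convexity/quasi-projectivity argument) before the assertion that the configuration of curves is $\omega_0$-symplectic can be made; the remaining localization and gluing steps you describe are bookkeeping, as you say, but the K\"ahlerness claim is the load-bearing step and cannot be left as an assertion.
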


\begin{corollary} \label{cor:1} Each of the monodromies in Theorem~\ref{thm:main2} is periodic (that is, some power the monodromy is some power of the boundary multitwist) and so the factorizations given are positive Dehn twist factorizations of roots of the boundary multitwist. Specifically we have 
\begin{itemize}
    \item $\phi_1^{10} = \tau_\bdry$,
    \item $\phi_2^8 = \tau_\bdry$.
    \item $\phi_3^6 = \tau_\bdry$, and
    \item $\phi_4^5 = \tau_\bdry$, 
\end{itemize}
where, as before, $\tau_\bdry$ is the boundary twist in the first two cases, and it is the boundary multitwist in the second two cases.
\end{corollary}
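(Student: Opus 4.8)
The plan is to split the four identities into two groups. For $\phi_1$ and $\phi_3$ I would read them off directly from the classical chain relations (as in Farb--Margalit). The curves underlying $\phi_1=\tau_1\tau_2\tau_3\tau_4$ form a chain of even length $4$ filling $\Sigma_{2,1}$, so the even chain relation $(\tau_{c_1}\cdots\tau_{c_k})^{2k+2}=\tau_\bdry$ with $k=4$ gives $\phi_1^{10}=\tau_\bdry$ with $\tau_\bdry$ the single boundary twist. The curves underlying $\phi_3=\tau_1\tau_2\tau_3\tau_4\tau_5$ form a chain of odd length $5$ filling $\Sigma_{2,2}$, so the odd chain relation $(\tau_{c_1}\cdots\tau_{c_k})^{k+1}=\tau_{\delta_1}\tau_{\delta_2}$ with $k=5$ gives $\phi_3^{6}=\tau_{\delta_1}\tau_{\delta_2}=\tau_\bdry$, now the boundary multitwist.

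The monodromies $\phi_2=\tau_1^2\tau_2\tau_3\tau_4$ and $\phi_4=\tau_1^2\tau_2\tau_3\tau_4\tau_5$ repeat $\tau_1$ and so are not honest chains; for these I would pass through the hyperelliptic double cover. Presenting $\Sigma_{2,1}$ and $\Sigma_{2,2}$ as double covers of a disk $D$ branched at $5$ and $6$ points respectively, the Birman--Hilden theorem identifies the hyperelliptic mapping class group with the braid group $B_5$, resp.\ $B_6$, via $\tau_i\mapsto\sigma_i$. Under this identification both $\phi_2$ and $\phi_4$ become the braid $\gamma=\sigma_1^2\sigma_2\cdots\sigma_{n-1}=\sigma_1\cdot(\sigma_1\sigma_2\cdots\sigma_{n-1})$ for $n=5,6$, which is exactly the periodic braid obtained by rotating the disk about a central marked point. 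I would then invoke the classical relation $\gamma^{\,n-1}=\Delta^2$, where $\Delta^2$ is the full twist (checkable geometrically, or by the elementary identity $(\sigma_1^2\sigma_2)^2=(\sigma_1\sigma_2\sigma_1)^2$ and its analogues), to get $\gamma^4=\Delta^2$ in $B_5$ and $\gamma^5=\Delta^2$ in $B_6$.

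It remains to lift these braid equalities across the cover, and identifying the image of the boundary multitwist is the step I expect to be the main obstacle, since it depends on the parity of the number of branch points. Over a collar of $\partial D$ the double cover is two disjoint annuli when the number of branch points is even (the $\Sigma_{2,2}$ case) and a single connected annulus when it is odd (the $\Sigma_{2,1}$ case). In the even case each boundary circle maps homeomorphically onto $\partial D$, so the full twist $\Delta^2=\tau_{\partial D}$ lifts to $\tau_{\delta_1}\tau_{\delta_2}=\tau_\bdry$, and $\gamma^5=\Delta^2$ pushes up to $\phi_4^5=\tau_\bdry$. In the odd case $\partial\Sigma_{2,1}$ is a single connected double cover of $\partial D$, so the twist about $\partial D$ does not lift to a map fixing $\partial\Sigma_{2,1}$; instead $\Delta^4=\tau_{\partial D}^2$ is the smallest power that lifts to $\tau_\bdry$. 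Thus $\gamma^4=\Delta^2$ lifts to make $\phi_2^4$ a square root of $\tau_\bdry$, and squaring gives $\phi_2^8=\tau_\bdry$. This same factor of two already appears in $\phi_1$, where $\delta^5=\Delta^2$ lifts to $\phi_1^5$, a square root of $\tau_\bdry$, and $\phi_1^{10}=\tau_\bdry$, providing an independent check against the chain relation.

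The remaining assertions of the corollary then follow formally. Every power $\phi_i^j$ is periodic because $\phi_i$ is. The reflected words $\phi_{\tilde 2}$ and $\phi_{\tilde 4}$ are carried to $\phi_2$ and $\phi_4$ by the symmetry of the branch locus reversing the chain, hence, as $\tau_\bdry$ is central, are roots of $\tau_\bdry$ of the same orders. The hyperelliptic involution $I$ is central and caps to an involution on the closed surface, so a bounded power of $I$ is a boundary multitwist and any product $\phi_i I$ is again periodic. Finally, for $\phi_A$, $\phi_B$, $\phi_3\phi_A$, and $\phi_4\phi_B$ the same branched-cover computation, or their recognition as monodromies of the corresponding algebraic families (which are periodic by construction), yields periodicity. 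Since each displayed word is manifestly a product of right-handed Dehn twists, each is a positive factorization of a root of the boundary multitwist.
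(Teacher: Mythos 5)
Your proof is correct and establishes all four displayed identities, but by a route that diverges from the paper's in one place. For $\phi_1$ and $\phi_3$ the two arguments are essentially the same computation in different clothing: the paper lifts the $(5,1)$- and $(6,1)$-torus braids through the hyperelliptic quotient, notes that their $5$th and $6$th powers are the full twist, and uses the parity of the number of strands to decide whether the full twist lifts to the hyperelliptic involution (odd, so $\phi_1^{10}=\tau_\bdry$) or directly to the boundary multitwist (even, so $\phi_3^6=\tau_\bdry$); your chain relations are the standard mapping-class-group packaging of exactly that branched-cover argument, and your parity discussion of the lift of $\Delta^2$ matches the paper's appeal to Birman--Hilden. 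The genuine difference is at $\phi_2$ and $\phi_4$: the paper disposes of these by asserting that $\phi_2^8$ is Hurwitz equivalent to $\phi_1^{10}$ and $\phi_4^5$ to $\phi_3^6$ (Hurwitz-equivalent factorizations have equal products), whereas you recognize $\phi_2$ and $\phi_4$ as lifts of the second family of periodic braids $\gamma=\sigma_1^2\sigma_2\cdots\sigma_{n-1}$, the rotation about a central marked point, and invoke $\gamma^{n-1}=\Delta^2$ together with the same parity analysis. Your version is more self-contained, resting on a classical braid identity rather than a Hurwitz equivalence between long positive words; the paper's version has the advantage of reusing the Hurwitz-equivalence machinery it develops elsewhere. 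One small slip in your closing paragraph: $\phi_{\tilde 2}$ and $\phi_{\tilde 4}$ are not obtained from $\phi_2$ and $\phi_4$ by reversing the chain, nor are they roots of $\tau_\bdry$ of the same orders; as mapping classes they are the powers $\phi_2^3$ and $\phi_1^4$ (they arise as the monodromies of $y^2=x(x^4+t^3)$ and $y^2=x^5+t^4$), so for instance $\phi_{\tilde 4}^{5}$ is $\tau_\bdry^2$ rather than $\tau_\bdry$. Their periodicity in the sense of the corollary is nonetheless immediate from your earlier observation that powers of periodic elements are periodic, so the conclusion is unaffected.
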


\noindent \textbf{Connections to page genus and the complexity of open books.}
Lastly, we point out that these examples are also interesting from the perspective of contact topology. Several of these resolutions are not plumbings along trees and so the only method for constructing a compatible open book is algebraic. For those resolutions that are plumbings of spheres along trees, these have at most one bad vertex. Trees with no bad vertices admit open book decompositions of genus 0 (\cite{Schonenberger}, \cite{EtguOz}) and trees with at most one bad vertex admit genus 1 open books (\cite{EtnyreOzbagci}). Interestingly, the Euler characteristic of the open books described by Theorem~\ref{thm:main2} are sometimes smaller than either the Euler characteristic of the open books described by \cite{Schonenberger}, \cite{EtguOz} or \cite{EtnyreOzbagci}. In that sense, some of the open books described by Theorem~\ref{thm:main2} have a smaller complexity than would be expected from the other constructions. As an example we look at $\phi_2^4$, and $\phi_2^6$, both of which are open books with a genus 2 page with one boundary component, so the Euler characteristic of the page is -3. Using the algorithm of \cite{EtnyreOzbagci}, in each of these cases the page of the open book associated to the plumbing tree for the resolution would have genus 1 and Euler characteristic -4.

\section{Acknowledgements}
We would like to thank Wenbo Niu, Lance Miller and Cagri Karakurt for useful discussions, and the Max Planck Institute in Bonn for supporting us during the start of our collaboration. We have used Mathematica for the braid movies. The second author was supported in part by Simons Foundation grant No.~639259. 

\section{Outline and proofs}
Since the proofs of Theorems~\ref{thm:main1} and \ref{thm:main2} are case by case analyses that make up most of the paper, we place the statements of their proofs here, along with a short outline of the paper.

First, in Section~\ref{sec:backg} we cover the necessary algebraic background along with the terminology for Lefschetz fibrations and a quick overview of their correspondence to branched covers of simply braided surfaces in $\CC^2$. We then calculate the resolution graphs of these singularities in Section \ref{Res}. We split the genus 2 fibrations on the resolutions into Lefschetz fibrations and record the corresponding Dehn twist factorizations in Sections \ref{sec:phi1and2}, \ref{sec:lower genus}, \ref{sec:split}.

\begin{proof}[Proof of Theorem~\ref{thm:main1}] The content of Section~\ref{Res} is the proof of Theorem~\ref{thm:main1}.\end{proof}

\begin{proof}[Proof of Theorem~\ref{thm:main2}] Using Theorem~\ref{thm:laufer}, the content of Sections~\ref{sec:phi1and2}, and \ref{sec:split} is the proof of Theorem~\ref{thm:main2}.\end{proof}

\begin{proof}[Proof of Theorem~\ref{thm:3}] Due to Gompf's work \cite{GompfLF} on Lefschetz fibrations generalizing Thurston's work \cite{ThurstonSF}, it's enough to show that the resolutions constructed in the proof of Theorem~\ref{thm:main1} and used in Theorem~\ref{thm:main2} admit a K\"ahler form (or really any symplectic form which is weakly compatible with the complex structure). To see that the resolution is K\"ahler, we embed the resolutions of the singular affine hypersurfaces of $\mathbf{C}^3$ into a compact algebraic surface. To do this, first complete the singular affine hypersurfaces described by one of the listed polynomials to a singular projective hypersurface in $\mathbb{CP}^3$. This will have an isolated singularity at the origin in one chart and potentially very complicated singularities at the $\mathbb{CP}^2$ at infinity. Then you resolve the singularities to get a smooth algebraic surface, which is necessarily K\"ahler. Within the affine chart, the only singularity is the isolated singularity at the origin, and we can choose the resolution above the affine chart to be the resolution constructed in Section~\ref{Res}. This gives an embedding of the resolution into a K\"ahler surface and hence the resolution admits a symplectic form which is compatible with the complex structure.

\end{proof}

\begin{proof}[Proof of Corollary~\ref{cor:1}] It suffices to prove each of the relations in the mapping class group of the relevant surface and since each of the monodromies is hyperelliptic, it's enough to look at the braids in the two-fold quotient. In \ref{sec:phi1}, we show that the monodromy $\phi_1$ around the link is the double branched cover of the braid corresponding to the (5,1) torus knot (so five strands and a 1/5 right-handed twist). The five-fold power of this is the full twist. Because there is an odd number of strands, the cover over the boundary is the nontrivial two fold cover and so the (5,5)-braid lifts to the hyperelliptic involution \cite{BirmanHilden}. Hence $\phi_1^{10} = \tau_\bdry$. For an analogous reason, $\phi_3$ is the double branched cover of the (6,1)-braid and so $\phi_3^6$ is the boundary multitwist. For the other two diffeomorphisms, it's easy to see that $\phi_2^8$ is Hurwitz equivalent to $\phi_1^{10}$ and similarly that $\phi_4^5$ is Hurwitz equivalent to $\phi_3^6$. \end{proof}

\section{Background}\label{sec:backg}
\subsection{Lefschetz fibrations}
  
Recall that a \emph{Lefschetz fibration} is a surjection $\pi : X \rightarrow B$, where $X$ has dimension 4 and $B$ has dimension 2, where the only allowed singularities have the local model of $\pi:(z_1, z_2) \rightarrow z_1^2 + z_2^2$ (in orientation-preserving complex coordinates). We often assume that all singularities lie in distinct fibers of $\pi$ (which can be realized by a small perturbation). When $X$ is either non-compact or with boundary we assume that the critical points of $\pi$ lie in the interior of $X$. The smooth fibers of the fibration are oriented surfaces and each singular fiber is a nodal singularity obtained by collapsing a simple closed curve (the \emph{vanishing cycle}) in a nearby smooth fiber. The \emph{monodromy} around a singular fiber is given by a positive (right-handed) Dehn twist along the vanishing cycle. We identify the vanishing cycles of different fibers by choosing a reference fiber (assumed to be smooth) $\pi^{-1}(x_0) = \Sigma_{g,k}$, a surface of genus $g$ and having $k$ boundary components (or ends if the surface is non compact), lying over the reference point $x_0 \in B$. Then for each singular fiber, we look at the fibration over a path from the reference fiber to the singular fiber to identify the vanishing cycle as a curve in $\pi^{-1}(x_0)$. The choice of paths gives a cyclic ordering of the vanishing cycles. For a genus $g$ Lefschetz fibration on a closed 4-manifold $X$ over $S^2$, the product of the positive Dehn twists for all vanishing cycles in the cyclic order induced by the choice of paths is equal to the identity element in the mapping class group MCG($\Sigma_g$) of the smooth fiber $\Sigma_g = \pi^{-1}(x_0)$. A Lefschetz fibration over $D^2$ is determined by a factorization of the monodromy of the fibration over $\bdry D^2$ as an element of MCG($\Sigma_{g,k}$) by positive Dehn twists. We call an ordered list of right handed Dehn twists in MCG($\Sigma_{g,k}$) a \emph{positive factorization}.\footnote{This is a factorization as we often think of the induced fibration over the boundary of the disk. This is a surface bundle over $S^1$ and so has a monodromy. The \emph{positive factorization} is a factorization of this monodromy.} Throughout this paper we will use braid notation for the factorizations, so that the cyclic ordering of the vanishing cycles yields a left-to-right ordering of the factorization.

Let $\tau_\gamma$ denote a Dehn twist around a loop $\gamma$ on the generic fiber $\Sigma_{g,k}$ of a Lefschetz fibration. Recall that \emph{Hurwitz moves} give a way of changing the local configuration of the arcs associated to two vanishing cycles which are adjacent in the cyclic ordering. For the corresponding positive factorization, we exchange the positions of two successive terms in either of the two ways that follows: $$\tau_\alpha \tau_{\beta} \rightarrow (\tau_\alpha \tau_{\beta}\tau_\alpha^{-1})\tau_\alpha = \tau_{\tau_\alpha^{-1} (\beta)} \tau_\alpha =: \tau_\alpha^{-1}(\tau_{\beta}) \tau_\alpha$$ or $$\tau_\alpha \tau_\beta \rightarrow \tau_{\beta}(\tau_{\beta}^{-1}\tau_{\alpha}\tau_{\beta}) = \tau_{\beta} \tau_{\tau_\beta(\alpha)} =: \tau_\beta \tau_\beta(\tau_\alpha)$$ in MCG($\Sigma_{g,k}$). 
We say that two positive factorizations are \emph{Hurwitz equivalent}, if they can be obtained from each other by a sequence of Hurwitz moves (\cite{Matsumoto96}, also \cite{Fuller1, Auroux, GS}).
    
There is an interplay between the mapping class groups of various surfaces and the braid groups with various numbers of marked points. Let $B_n$ denote the braid group on $n$ strands for $n \geq 2$. Recall that $B_n$ is generated by $n-1$ standard generators $a_1, \cdots, a_{n-1}$, where the braid relations $$a_i a_{i+1}a_i=a_{i+1}a_ia_{i+1}, \;i=1, \cdots, n-2, \;\;\; a_ia_j = a_ja_i, \;\text{if}\; |i-j|>1 $$ hold. A braid is called \emph{quasipositive} if it is the product of conjugates of the positive generators $a_i$ of the braid group $B_n$, i.e., it is represented by a quasipositive braid word $\prod_{k=1}^m w_k a_{i_k}w_k^{-1}$, where $w_1, \cdots, w_m$ are arbitrary words in $a_1, \cdots, a_{n-1}, a_1^{-1}, \cdots, a_{n-1}^{-1}$ \cite{Rudolph}. The subgroup of MCG($\Sigma_{g,\epsilon}$), where $\epsilon = 1, 2$, generated by the standard loops $\gamma_1, \cdots, \gamma_{2g+\epsilon-1}$ is the hyperelliptic subgroup of the mapping class group, and is related to the braid group $B_{2g+\epsilon}$ via the double branched cover, as follows. When we give the genus $g$ surface as a double branched cover of the disc branched in $2g + \epsilon$ points, the Dehn twists $\tau_1, \cdots, \tau_{2g+\epsilon-1}$ are the lifts of the standard generators of $B_{2g+\epsilon}$. Under this correspondence, a quasipositive braid factorization lifts to a positive Dehn twist factorization. An identification of the standard braid generators in the marked points then gives a set of Dehn twist generators of the hyperelliptic subgroup in the double branched cover. 

\subsection{Braided surfaces and branched covers}
We want to give a full symplectic description of a Lefschetz fibration associated to the resolution $X$ of some singular variety. For us, that means an ordered list of the \emph{vanishing cycles} of the Lefschetz fibration constructed via a flat deformation $\pi^s$ of some more complicated singular fibration $\pi$, along with an identification of the reference fiber $(\pi^s)^{-1}(t_0)$.

Our way of getting ahold of that information is to use a description of $X$ as the double branched cover over $C^2$ branched over some curve $C$ where the branch locus is a simply braided surface in $\CC \times \CC$.

A \emph{braided surface} in $\CC \times \CC$ is an embedded surface $\Delta$ which is transverse to the preimages of the projection on to the first factor $p_1: \CC \times \CC \rightarrow \CC$ except at finitely many points at which it is tangent. Throughout we will use coordinates $(t,x)$ on $\CC^2 = \CC \times \CC$. We say the braiding or branching at some point of tangency is $\emph{simple}$ if at the points of tangency 
$\Delta$ is locally parameterized as $(s^2,s)$ (in the $(t,x)$ coordinates). Often we perturb $\Delta$ so that each point of tangency occurs at different values of $t$. The \emph{index} of the braiding is the intersection number of the surface $\Delta$ with any of the fibers $\CC_t= p_1^{-1}(t)$ to which $\Delta$ is transverse. 

If $\Delta$ is a simply branched surface in $\CC^2$ and $D \subset \CC$ is a disk of large enough radius to enclose all the $t$-values of the points of tangency, then the double branched cover $Br: X=X_\Delta \to \CC^2$ is a smooth symplectic manifold and the composition $\pi := p_1 \circ Br: X \to D$ is a Lefschetz fibration. Above each value of $t$, the fiber $\Sigma_t$ is the double branched cover of the plane $\CC_t$ branched over the points $\Delta \cap \CC_t$. This is a surface with 1 or 2 boundary components (depending on whether the index of $\Delta$ is even or odd) and by Riemann-Hurwitz, the Euler characteristic of this double cover is $2 - \mathrm{index}(\Delta)$.

We can also determine the vanishing cycles of $\pi$. We start by choosing a reference fiber $t_0 \in \CC$ (often we will choose $t_0=1$) and paths $a_1, \dots, a_k$ from $t_0$ to the $t$-values of the points $t_1, \dots, t_k$ of tangency of $\Delta$ (respectively and missing all other tangency values). Above each of these paths, $\Delta$ intersects the fibers $\CC_t$ in a 1-parameter family of points which are disjoint except above the point of tangency $t_i$ where two points collide. Since all copies of $\CC_t$ are isomorphic via projection $p_2$ to the reference fiber $\CC_{t_0}$, over this path $\Delta$ is described by a movie of disjoint points in the $x$-plane ending with two of the points colliding. In the Lefschetz fibration $\pi$ on the double branched cover, the fiber above $t_i$ is a Lefschetz singularity and the vanishing cycle above $a_i$ can be recovered from this movie of $\Delta$. To find the vanishing cycle, rewind the movie slightly and indicate the path of the collision of the two points by an arc $q_i$. Generate the movie of $\Delta$ over $a_i$ by an isotopy of the $x$-plane and flow this arc back to the configuration above $t_0$ under the isotopy. Then the vanishing cycle of $\pi$ at $t_i$ over the arc $a_i$ is the preimage of $q_i$ under the double branched cover \cite{LoiP}. (See Figure~\ref{fig:braidqp}.)

\begin{figure}
    \centering
    \includegraphics[width = 6.7in]{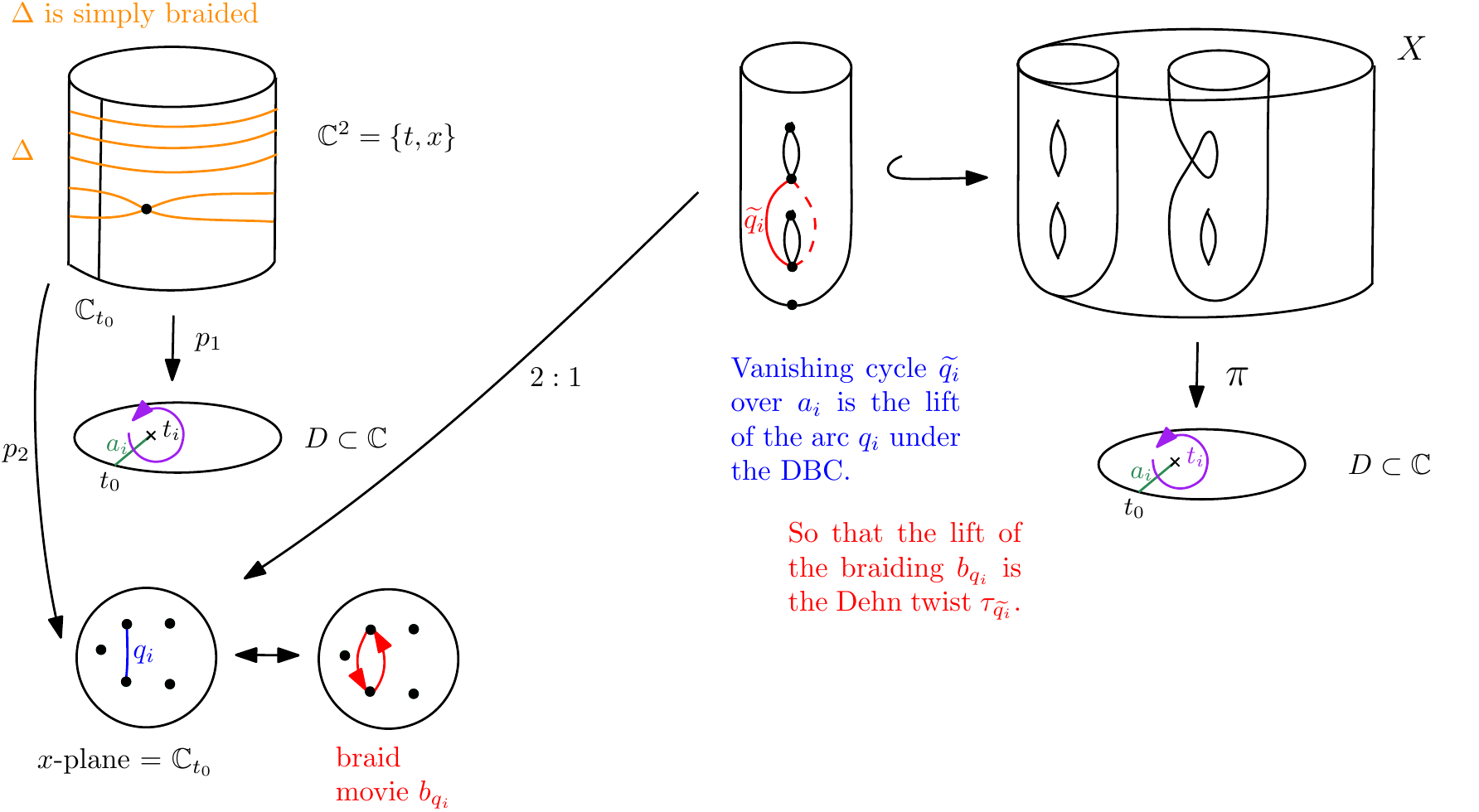}
    \caption{Braided surfaces and vanishing cycles in the Lefschetz fibrations}
    \label{fig:braidqp}
\end{figure}

The boundary of a braided surface is a braided link in $S^3$ and every simply braided surface with boundary $L$ corresponds to a quasipositive factorization of $L$ \cite{Rudolph} (up to band equivalence). The diagram of the quasipositive twists corresponds directly to the arcs described above. In the quasipositive factorization, each quasipositive twist is a braiding of two of the points around a choice of arcs. In the double branched cover, this braiding lifts to a right-handed Dehn twist. If the quasipositive twist comes from the movie of $\Delta$ over some path, then in the double branched cover the Lefschetz singularity has vanishing cycle the core of the corresponding Dehn twist.

To keep track of this picture, we often choose to characterize the branched cover $Br: \Sigma_{t_0} \mapsto \CC_{t_0}$ by identifying a chain of circles and arcs that form the \emph{skeleta} of $\Sigma_{t_0}$ and $\CC_{t_0}$ respectively and which are sent one to the other under the covering map. 

Throughout the next section, we give a description of a Lefschetz fibration in this language. We start with a smooth variety $X,$ which is the resolution of some hyperelliptic surface. That hyperelliptic surface is given as the branched cover of $\CC^2$ branched over some (not simply) braided surface. We deform this surface via some $\Delta_s$ which is simply braided for all $s\neq 0$ and verify that in the double branched cover $X_s$ this yields a flat deformation of $X$. We then recover the positive factorization that describes the Lefschetz fibration $\pi^s$ on $X_s$ using the descriptions of quasipositive braiding along certain arcs $q_i$, the corresponding skeleton in $\CC_{t_0}$, and their lifts to the reference fiber $\Sigma_{t_0}$.

\subsection{Algebraic preliminaries}

The focus of this article is the translation of understanding of a complex surface singularity between the description in terms of its resolution and a description using the language of Lefschetz fibrations. Throughout we will consider hypersurface singularities of the form $y^2 = x^5 + t^k$, $y^2 = x^6 + t^k$, $y^2 = x(x^4 + t^k)$ and $y^2 = x(x^5 + t^k)$. Notationally, we will use $f$ to refer to such a polynomial, $V(f)$ its vanishing locus in $\CC^3$, and $X_f$ the resolution of $V(f)$. Each of these singular varieties and their resolutions comes equipped with a fibration by projecting to the $t$-coordinate. The fibers are all hyperelliptic curves of bidegree $(2,5)$ or $(2,6)$. Topologically these are genus 2 surfaces with either 1 or 2 boundary components (respectively).

The choice of singularities is motivated by work of Namikawa and Ueno \cite{NamikawaUeno-list, NamikawaUeno-long} where they described the singularities that can occur in families of genus 2 curves. To avoid the kinds of algebraic issues that arose in the original article, we work in the affine setting. Following Namikawa and Ueno, we think of the variety $V(f)$ as a family of genus 2 curves parameterized by $t$ via the projection map onto the $t$ coordinate, $\pi_t: V(f) \to \CC$. This fibration persists to the resolution where the central fiber becomes the plumbing of curves associated to the resolution graph. First we will calculate the resolution of these singularities as described by a weighted plumbing tree and then we will construct a deformation of the genus 2 fibration on resolution into a Lefschetz fibration. 

We define a \emph{deformation} of a hypersurface singularity $V(f)$ to be a family of hypersurfaces $V(f_s)$ parameterized by $s\in \CC$, where $f_s$ is a polynomial in $(x,y,t)$ and $f_0 = f$ is the polynomial under consideration.

A \emph{splitting} of $f$ is a deformation $f_s$ so that the resolutions $X_f$ and $X_{f_s}$ lie in a flat family.

In general, given an arbitrary flat deformation $f_s$, the family of resolutions $X_{f_s}$ need not be flat and the different resolutions need not be even homeomorphic as manifolds. However, under certain constraints it is possible to guarantee flatness of the family $X_{f_s}$. 

Now consider $\pi:\mathcal{X}\mapsto B$ where $\mathcal{X}$ is a complex space (possibly singular), $B$ a complex manifold and for $t \in B$, $X_t := \pi^{-1}(t)$. 
\begin{theorem} (\cite{GK}, Satz 2.3) 
If $\pi: \mathcal{X} \rightarrow B$ is flat and $X_0$ is a complex manifold, then for every point $x_0 \in X_0$ there exist neighborhoods $U(x_0)$ in $\mathcal{X}$, $V(0)$ in $M$, $W(x_0)$ in $X_0$ and a biholomorphic map $\varphi: U \rightarrow V \times W$ with $pr_V \circ \varphi = \pi$, where $pr_V$ is the projection map onto $V$. 
\label{GK}
\end{theorem}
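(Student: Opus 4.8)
The plan is to reduce the statement to the standard fact that a flat holomorphic map with smooth fiber is a submersion, and then to invoke the holomorphic rank theorem. I would work locally with the analytic local rings $A = \mathcal{O}_{B,0}$ and $R = \mathcal{O}_{\mathcal{X}, x_0}$, and let $\pi^*\colon A \to R$ denote the induced local homomorphism. Flatness of $\pi$ at $x_0$ means that $R$ is a flat $A$-module, and the local ring of the fiber is $R/\mathfrak{m}_A R = \mathcal{O}_{X_0, x_0}$, which by hypothesis is regular, say of dimension $n = \dim_{x_0} X_0$. Since $B$ is a complex manifold, $A$ is regular of dimension $m = \dim B$.

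The first and main step is to show that $R$ is regular, i.e. that $\mathcal{X}$ is itself smooth at $x_0$. Let $t_1, \dots, t_m$ be a regular system of parameters of $A$, and let $x_1, \dots, x_n \in \mathfrak{m}_R$ be elements whose images form a regular system of parameters of the fiber ring $R/\mathfrak{m}_A R$. Since $\mathfrak{m}_A R = (t_1, \dots, t_m)R$ and the $x_j$ generate $\mathfrak{m}_R/\mathfrak{m}_A R$, the $m+n$ elements $t_1, \dots, t_m, x_1, \dots, x_n$ generate $\mathfrak{m}_R$, so the embedding dimension of $R$ is at most $m+n$. On the other hand, the dimension formula for a flat local homomorphism gives $\dim R = \dim A + \dim(R/\mathfrak{m}_A R) = m+n$. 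Hence the embedding dimension equals the Krull dimension, so $R$ is regular and $t_1, \dots, t_m, x_1, \dots, x_n$ is a regular system of parameters (that flatness lets the regular sequence $t_1,\dots,t_m$ descend to $R$ is the conceptual engine here).

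With $\mathcal{X}$ now known to be a complex manifold of dimension $m+n$ near $x_0$, the map $\pi$ is holomorphic between manifolds whose fiber $X_0$ is smooth of the expected codimension $m$, so the differential $d\pi_{x_0}$ is surjective and $\pi$ is a holomorphic submersion at $x_0$. Concretely, setting $\tilde t_i := t_i \circ \pi$, the functions $\tilde t_1, \dots, \tilde t_m, x_1, \dots, x_n$ form a holomorphic coordinate system on a neighborhood $U$ of $x_0$ in which $\pi$ is the linear projection onto the first $m$ coordinates. Applying the holomorphic rank theorem (the implicit function theorem for submersions) and taking $V$ to be the image $\pi(U)$, a neighborhood of $0$ in $B$, and $W = U \cap \pi^{-1}(0) = U \cap X_0$, these coordinates furnish a biholomorphism $\varphi\colon U \to V \times W$ with $pr_V \circ \varphi = \pi$, as required.

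The one genuinely substantive point is the regularity argument of the second paragraph: everything hinges on the compatibility of flatness with the dimension formula (and with regular sequences), which together promote smoothness of the fiber to smoothness of the total space. Once that is secured, the product structure is an immediate consequence of the classical rank theorem, and the remaining verifications are routine.
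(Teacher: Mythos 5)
The paper does not prove this statement at all: it is quoted verbatim as Satz~2.3 of the cited reference [GK] and used as a black box, so there is no in-paper argument to compare yours against. That said, your proof is correct and is the standard one. The key step --- lifting a regular system of parameters $t_1,\dots,t_m$ of $\mathcal{O}_{B,0}$ and a regular system of parameters of the fiber ring $R/\mathfrak{m}_A R$ to get $m+n$ generators of $\mathfrak{m}_R$, then using the flat dimension formula $\dim R=\dim A+\dim(R/\mathfrak{m}_A R)$ to conclude that embedding dimension equals Krull dimension --- is exactly the right mechanism for promoting smoothness of the fiber to smoothness of the total space, and the passage from there to the product structure via the holomorphic rank theorem is routine. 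One phrasing to tighten: the clause ``the fiber $X_0$ is smooth of the expected codimension, so $d\pi_{x_0}$ is surjective'' is not a valid general principle on its own (consider $z\mapsto z^2$, where the scheme-theoretic fiber is a fat point); what actually makes $d\pi_{x_0}$ surjective is your preceding observation that $\tilde t_1,\dots,\tilde t_m,x_1,\dots,x_n$ form a regular system of parameters, hence a coordinate system in which $\pi$ is literally the projection. Since you state that concretely in the next sentence, the argument stands; I would simply lead with the coordinate statement rather than the codimension heuristic. Also note the statement's ``$V(0)$ in $M$'' is a typo in the paper for ``$V(0)$ in $B$,'' which you have implicitly (and correctly) read as intended.
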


\begin{definition} A \emph{very weak simultaneous resolution} of a family $\lambda: \mathcal{V} \to T$ of singular hypersurfaces is a map $\Pi: \mathcal{M} \to \mathcal{V}$ such that 
    \begin{enumerate}
        \item $\Pi$ is proper.
        \item $\lambda \circ \Pi: \mathcal{M} \to T$ is flat. \label{ind1}
        \item $\Pi_s: M_s \to V_s$ is a resolution for all $s \in T$.
    \end{enumerate}
\end{definition}

Comparing with the notation used at the beginning of this subsection, $\mathcal{V} = \{V(f_s), s\in  T\}$, and the family $\mathcal{M}$ will come from the corresponding resolutions $M_s = X_{f_s}$. Since $M_0$ is smooth, flatness in \ref{ind1} means that $\lambda \circ \Pi$ is a (locally trivial) deformation of $M_0$ as a complex analytic and symplectic manifold (see Theorem \ref{GK} above).  
We will use such a resolution to \emph{split} the resolution of our singularities. The following theorem of Laufer gives the criterion we will use to show that the family of resolutions $\mathcal{X}$ above a given deformation $\mathcal{V}$ is indeed a splitting. 
   
    \begin{theorem}[Laufer \cite{Laufer-weak}, Theorem 5.7]  \label{thm:laufer} Let $\lambda: \mathcal{V} \to T$ be the germ of a flat deformation of the normal Gorenstein two-dimensional singularity $(V,p)$, with $T$ a reduced analytic space. Then $\lambda$ has a very weak simultaneous resolution, possibly after finite base change, if and only if $K_s \cdot K_s$ for $s \in T$ is constant. 
    \end{theorem}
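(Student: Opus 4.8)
The plan is to prove the two implications separately, obtaining the ``only if'' direction as a quick consequence of Theorem~\ref{GK} and reserving the substantive work for the ``if'' direction. Throughout, write $\pi_s\colon M_s \to V_s$ for the fiberwise resolutions, $E_s \subset M_s$ for the exceptional set, and $p_g(V_s) := \dim H^1(M_s, \mathcal{O}_{M_s})$ for the geometric genus; the Gorenstein hypothesis guarantees that the dualizing sheaf is a line bundle, so that the canonical cycle $Z_{K_s}$ and the self-intersection $K_s \cdot K_s = Z_{K_s} \cdot Z_{K_s}$ are well defined and computable from the weighted dual graph of $E_s$.

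For the easy direction, suppose a very weak simultaneous resolution $\Pi\colon \mathcal{M} \to \mathcal{V}$ exists. Then $\lambda \circ \Pi \colon \mathcal{M} \to T$ is flat with smooth special fiber $M_0$, so Theorem~\ref{GK} applies at each point of $M_0$ and exhibits $\lambda \circ \Pi$ as a locally trivial family near the exceptional set. Local triviality carries the exceptional divisor $E_0$ and its intersection pattern isomorphically onto each $E_s$, so the weighted dual graph is independent of $s$; since $K_s \cdot K_s$ is determined by this combinatorial data, it is constant in $s$.

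For the hard direction I would first produce a candidate by resolving the three-dimensional total space $\mathcal{V}$ (after a finite base change of $T$, needed to trivialize the monodromy that may permute the exceptional curves as $s$ varies, exactly as in the Brieskorn--Tjurina simultaneous resolution of rational double points) and arranging that $\Pi$ restricts to a resolution on every fiber. The entire difficulty is then to verify that $\lambda \circ \Pi$ is flat. Because the base is reduced and $M_0$ is smooth, flatness is equivalent to the local freeness of the relevant higher direct image: by Grauert's cohomology-and-base-change theorem, $R^1\Pi_* \mathcal{O}_{\mathcal{M}}$ is locally free and commutes with base change precisely when $\dim H^1(M_s, \mathcal{O}_{M_s}) = p_g(V_s)$ is constant in $s$. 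Thus the construction succeeds if and only if $p_g$ is constant, and it remains to connect this to the hypothesis that $K_s \cdot K_s$ is constant.

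The heart of the argument, and the step I expect to be the main obstacle, is this equivalence between constancy of $K_s \cdot K_s$ and constancy of $p_g$. The bridge is a Riemann--Roch/Laufer-type identity for Gorenstein surface singularities, of the shape $12\, p_g(V_s) + K_s \cdot K_s = \mu_s - \chi_{\mathrm{top}}(E_s) + 1$, relating the two analytic invariants $p_g$ and $K^2$ to topological data of the exceptional set (and, in the smoothable case, the Milnor number). The plan is to combine this identity with the upper semicontinuity of $p_g$ in a flat family and the corresponding semicontinuity of the topological terms: with $K_s \cdot K_s$ held constant by hypothesis, these semicontinuities must be squeezed into equalities, forcing $p_g$ to be constant and hence, by the previous paragraph, producing the flat simultaneous resolution. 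Making this squeeze rigorous is delicate precisely because it requires the Gorenstein condition (to have Serre duality and a genuine canonical cycle), normality (so that $\pi_* \mathcal{O}_{M_s} = \mathcal{O}_{V_s}$ and the singularities are isolated), and the reducedness of $T$ (so that the semicontinuity and base-change machinery applies and the finite base change is available); tracking all of these simultaneously, rather than either single implication, is where the real content of Laufer's theorem lies.
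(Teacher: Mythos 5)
The first thing to say is that the paper contains no proof of this statement: it is quoted verbatim as Theorem 5.7 of Laufer's paper on weak simultaneous resolution and used as a black box (its only role in the paper is to reduce flatness of the family of resolutions to constancy of $K_s\cdot K_s$, which the authors then check via Gompf's formula $c_1^2 = 4d_3+2\chi+3\sigma$). So there is no internal argument to compare yours against; you are attempting to reprove a deep cited result, and your proposal should be judged as a sketch of Laufer's own argument.

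As such a sketch it has the right general shape (reduce to constancy of $p_g$ via base change and a Riemann--Roch/Milnor-number identity), but two steps are genuinely broken. First, your ``only if'' direction rests on the claim that local triviality of $\lambda\circ\Pi$ carries $E_0$ and its weighted dual graph isomorphically onto each $E_s$. This is false, and the failure is exactly what the adjective ``very weak'' signals: in the Brieskorn--Tjurina simultaneous resolution of $xy=z^2-u^2$ the exceptional set is a $\mathbb{P}^1$ over $u=0$ and empty over $u\neq 0$, so the dual graph is not a deformation invariant. What \emph{is} constant is $K_s\cdot K_s$ viewed as a Chern number of the locally trivial family of open complex manifolds $M_s$ (with fixed pseudoconvex boundary) --- this is precisely the invariance the paper exploits with the $d_3$ formula, and it is the argument you should substitute. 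Second, in the ``if'' direction, the assertion that a resolution of the three-dimensional total space $\mathcal{V}$ can be ``arranged'' to restrict to a resolution on every fiber is unjustified and is, in general, the whole problem: the central fiber of a resolution of the total space is typically non-reduced or non-normal and is not a resolution of $V_0$. The implication ``$p_g$ constant $\Rightarrow$ very weak simultaneous resolution after finite base change'' is itself a hard theorem (due to Laufer, building on Brieskorn, Wahl and Elkik), and the equivalence of $p_g$-constancy with $K^2$-constancy via the identity $\mu_s = 12p_g(V_s)+K_s\cdot K_s+\chi_{\mathrm{top}}(E_s)-1$ and a semicontinuity squeeze is stated but not established (you would need to prove the semicontinuity of the topological term, handle fibers with several singular points, and treat non-smoothable deformations where $\mu_s$ is not available). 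As written, the proposal is a plausible roadmap with the two hardest waypoints left as assertions, plus one incorrect argument in the direction you label as easy.
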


We will use Laufer's theorem to construct a very weak simultaneous resolution. In each of our examples, we construct a flat deformation $V(f_s)$ of a two-dimensional singularity $V(f)$ and calculate $K_s \cdot K_s$ by its value on the corresponding resolution $X_{f_s}$. 

All of our singularities are hypersurface and hence also Gorenstein. Each $X_s$ comes with a fibration by algebraic curves induced by the projection onto the $t$ coordinate. Truncating to a disk of large radius gives the boundary the structure of an open book $B$ on $Y = \bdry X_s$, independent of $s$, and each $X_s$ is pseudo convex with boundary a plane field isotopic to the contact structure $\xi$ carried by $B$. For each $X_s$, then we have 
\[ K(X_s)^2=c_1^2(X_s) = 4 d_3(\xi)+2\chi(X_s) + 3 \sigma(X_s) \label{eqn:gompf}\]
where $\chi(X_s)$ is the Euler characteristic and $\sigma(X_s)$ is the signature, and $d_3$ is Gompf's 3-dimensional invariant of the plane field $\xi$ \cite{Gompf}. As $d_3$ is determined by $\xi$, it is independent of the choice of deformation. Thus a deformation determines a splitting precisely when the Euler characteristic and signature of $X_{f_s}$ is independent of $s$.

\begin{proposition} Let $(V,p)$ be an isolated two-dimensional singularity whose link is strictly pseudoconvex and with resolution $X$ which is symplectically deformation equivalent to a Stein domain. Let $V_s$ be a flat deformation of $(V,p)$ with isolated singularities and strictly pseudoconvex boundary and having minimal resolutions $X_s$, also symplectically deformation equivalent to a Stein domain. Then the family $X_s$ forms a flat deformation of $X$ (possibly after finite base change) if and only if the $X_s$ share the same $b_1$, $b_2^+$ and $b_2^-$ as $X$. 
\end{proposition}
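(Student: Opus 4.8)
The plan is to route the equivalence through two results already in hand: Laufer's criterion (Theorem~\ref{thm:laufer}), which converts the existence of a flat simultaneous resolution into the constancy of $K_s\cdot K_s$, and the local triviality statement (Theorem~\ref{GK}), which converts flatness with smooth special fiber back into constancy of the topology. Gompf's formula is the bridge between the algebraic quantity $K_s\cdot K_s$ and the topological invariants $b_1,b_2^+,b_2^-$. First I would check that Laufer's hypotheses hold: although the proposition is phrased for a general isolated two-dimensional singularity, in our setting $(V,p)$ is an isolated hypersurface singularity, hence Gorenstein, and being isolated and two-dimensional it is normal; the deformation base $T\subset\CC$ is reduced. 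Thus Theorem~\ref{thm:laufer} applies to $\lambda\colon\mathcal V\to T$.

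For the \emph{if} direction, assume $b_1(X_s),b_2^+(X_s),b_2^-(X_s)$ agree with those of $X$ for all $s$. Each resolution $X_s$ deformation retracts onto its exceptional divisor, a compact configuration of complex curves, so $X_s$ has the homotopy type of a $2$-complex: $b_0(X_s)=1$ and $b_3(X_s)=b_4(X_s)=0$, while the intersection form on $H_2(X_s)$ is negative definite (Mumford/Grauert), in particular non-degenerate, so that $b_2(X_s)=b_2^+(X_s)+b_2^-(X_s)$. Hence
\[
\chi(X_s)=1-b_1(X_s)+b_2^+(X_s)+b_2^-(X_s),\qquad \sigma(X_s)=b_2^+(X_s)-b_2^-(X_s)
\]
are both independent of $s$. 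Using Gompf's formula $K(X_s)^2=4d_3(\xi)+2\chi(X_s)+3\sigma(X_s)$ together with the fact that the contact structure $\xi$ on the common boundary $Y=\partial X_s$---and therefore $d_3(\xi)$---is fixed along the deformation, I conclude that $K_s\cdot K_s$ is constant. Laufer's theorem then yields a very weak simultaneous resolution after a finite base change, i.e.\ a flat proper $\lambda\circ\Pi\colon\mathcal M\to T$ with $M_0=X$ smooth; this is exactly the statement that the $X_s$ form a flat deformation of $X$.

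For the \emph{only if} direction, assume the $X_s$ form a flat deformation of $X$ (after a possible finite base change). Since the special fiber $M_0=X$ is smooth, Theorem~\ref{GK} supplies a local biholomorphic product structure for $\mathcal M\to T$ near each point of $X$; together with properness of the family and compactness of the fibers, an Ehresmann-type argument makes the nearby $X_s$ diffeomorphic to $X$, so all of $b_1,b_2^+,b_2^-$ are constant. A finite base change merely reparametrizes $T$ and leaves the fibers unchanged, so it does not affect this conclusion. It is worth noting that one cannot run this direction through Laufer and Gompf alone: those results only deliver constancy of the single combination $2\chi(X_s)+3\sigma(X_s)$, which does not separately pin down $b_1,b_2^+,b_2^-$, so the local triviality of Theorem~\ref{GK} is genuinely needed here.

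The step I expect to be the main obstacle is the deformation-invariance of $d_3(\xi)$. To invoke Gompf's formula with a single number $d_3(\xi)$ for every $s$, one must verify that all the minimal resolutions $X_s$ really do share one contact boundary $(Y,\xi)$ up to isotopy; this is precisely where the hypotheses that the link is strictly pseudoconvex and that $X$ and the $X_s$ are symplectically deformation equivalent to Stein domains are used---truncating to a large disk exhibits $\partial X_s$ as the $s$-independent open book described earlier, and the induced complex-tangency plane fields are isotopic. Secondary points to nail down are that the algebraic self-intersection $K_s\cdot K_s$ computed on the resolution coincides with the $c_1^2$ in Gompf's formula (they agree up to the sign $K=-c_1$), and that the finite base change permitted by Laufer's theorem does not disturb the topological conclusions.
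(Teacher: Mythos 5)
Your proposal is correct and follows essentially the same route as the paper: the \emph{only if} direction via flatness with smooth special fiber implying the fibers are diffeomorphic (hence equal Betti numbers), and the \emph{if} direction via the constancy of $d_3(\xi)$ on the fixed contact boundary, Gompf's formula to get $K_s\cdot K_s$ constant, and then Laufer's criterion. You supply more detail than the paper does (verifying the Gorenstein/normal hypotheses, spelling out how $\chi$ and $\sigma$ are recovered from $b_1,b_2^{\pm}$), but the underlying argument is the same.
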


\begin{proof}
If we assume the family $X_s$ is a flat deformation of $X$ (possibly after finite base change), then all fibers $X_s$ are diffeomorphic and hence all share the same values of $b_1$, $b_2^+$ and $b_2^-$. 

Conversely, assume that all fibers share the same values of $b_1$, $b_2^+$ and $b_2^-$. Since the deformation of $(V,p)$ is smooth away from the singularities, the contact structure on the pseudoconvex boundary remains constant and so all the terms in Gompf's $d_3$ formula, equation \ref{eqn:gompf} above, $d_3$, $b_1$, $b_2^+$, $b_2^-$, are constant and so $K_s \cdot K_s$ is constant as well. By Laufer, then, the family $X_s$ forms a very weak simulaneous resolution (possibly after finite base change) of $V_s$ and is, in particular, flat.\end{proof}

In addition, the fibration structure is induced by the common projection onto the $t$ coordinate, which persists after the resolution, and hence a simultaneous resolution additionally gives us a deformation of the corresponding fibration. In each case, we will (eventually) deform entirely into a Lefschetz fibration.

\section{Resolutions}
\label{Res}
In this section we resolve the singularities, which we will split below, and give their resolution graphs. 

\noindent {\bf Reconstructing the Namikawa-Ueno Fibers}

In Subsections~\ref{phi_1s} and \ref{phi_3s} we construct 13 types of the Namikawa-Ueno fibers, in a different way than in \cite{NamikawaUeno-long}. Namely, we construct them via Nemethi's algorithm in \cite{Nemethi-algorithm}, Section 3.6 and Theorem 3.7 a) (see also \cite{Nemethi-lectures}, III. Appendix 1).
\subsection{\texorpdfstring{{\boldmath $\phi_1^k$: $y^2 = x^5+t^k$}}{Phi1K}}
\label{phi_1s}
We start with resolving the plane curve singularity $x^5-y^2 =0$ by successive blow-ups. This gives us a tree $\mathcal{T}$ with four vertices $w_i$'s, denoting the 2-spheres coming from the exceptional divisors, whose multiplicities are 5, 10, 4, 2 and self intersections are $-2,-1,-3,-2$, respectively. (One can verify this graph via MAGMA). Each sphere $w_i$ intersects $w_{i-1}$ and $w_{i+1}$, and at the vertex of multiplicity 10 we have an arrow which denotes the proper transform of the starting singular curve. See the first step of Figure~\ref{phi1}, where we denote the multiplicities in parentheses.

\noindent {\boldmath  $k=1$}: 
To find the resolution of $y^2 = x^5+t$, to the tree $\mathcal{T}$ above we apply the Nemethi's algorithm in \cite{Nemethi-algorithm}, Section 3.6 with $N=1$. Since degree of the cover of $\mathcal{T}$ (which is the power of $t$) is $N=1$, we simply start with $\mathcal{T}$ and successively blow down the $-1$ curves. This gives us the $(2,5)$ cusp, hence we obtain the fiber of type VIII-1 in the Namikawa-Ueno's list in \cite{NamikawaUeno-list}. See Figure~\ref{phi1}, where after the first step we use the dual graph notation and denote the spheres by line segments/curves.
\begin{figure}[htb]
{\includegraphics[width=15cm]{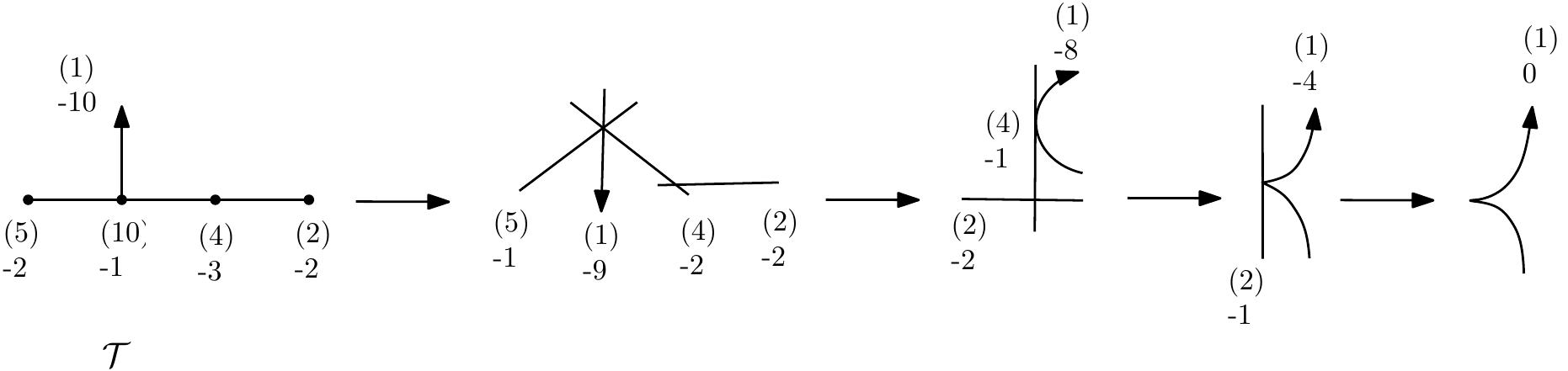}}
\caption{$\phi_1$ - Fiber VIII-1}
\label{phi1}
\end{figure}

\noindent {\boldmath  $k=2$}: 
To find the resolution of $y^2 = x^5+t^2$, to the tree $\mathcal{T}$ above, we apply the Nemethi's algorithm with $N=2$. Namely, we first find the preimages of each vertex of $\mathcal{T}$ under the degree two cover. Let us call the vertices of multiplicities $5,10,4,2$ in the tree $\mathcal{T}$, $w_1, \cdots, w_4$, respectively. In the algorithm $s_{w_i}$ denotes the number of all vertices and arrows adjacent to ${w_i}$. If $\{u_1, \cdots, u_s\}$ is the set of the neighbors of $w_i$, then $d_{w_i}$ denotes the $\text{gcd}(m_{w_i}, m_{u_1}, \cdots, m_{u_s})$, where $m$ of a vertex is its multiplicity. If the degree of the cover is $N$, above a vertex $w_i$, there are $\text{gcd}(d_{w_i}, N)$ vertices, each with multiplicity $\displaystyle{\frac{m_{w_i}}{(m_{w_i}, N)}}$. In our case, from $\mathcal{T}$, we have $s_{w_1}=1$, $s_{w_2}=3$, $s_{w_3}=2$ and $s_{w_4}=1$; and $d_{w_1}=5$, $d_{w_2}=1$, $d_{w_3}=2$ and $d_{w_4}=2$. For each of the preimages of $w_1$ and $w_2$, we find one vertex of multiplicity 5. In the preimage of $w_3$ there are two vertices of multiplicities 2, and in the preimage of $w_4$ there are two vertices of multiplicities 1. We also compute that all these vertices have genus 0 (see the genus formula in Step 2 in \cite{Nemethi-algorithm}, p.114 or Step 1 in \cite{Nemethi-lectures}, III. Appendix 1). Next, we find the preimages of all the edges and the arrow in $\mathcal{T}$ (where the arrow is the proper transform of the starting singular curve in the plane) as follows. From the algorithm we find that above the edge $(5,10)$ whose end points have weights $5, 10$ we have $\text{gcd}(5,10,2)=1$ string of type $G(5,10,2)$  (see \cite{Nemethi-lectures}, III. Appendix 1 for the string notation). Thus we solve $10+x \cdot 5 \equiv 0 \; (\text{mod} \; 2)$. Since $0 \leq x_1=0 <2$ is the solution, this gives us that in the string there are no vertices between the end points. Hence, above the edge $(5,10)$, there is one edge whose end points have weights $5,5$. In the same way, above the edge $(10,4)$ there are $\text{gcd}(10,4,2) = 2$ copies of strings of type $G(5,2,1)$. Then we solve $2+x \cdot 5 \equiv 0 \; (\text{mod} \; 1)$. Since the solution is $x_1=0$, in each of the two strings there are no vertices between their end points. Hence we get 2 edges whose initial vertices is common and has multiplicity 5, and their end points both have multiplicities 2. Above the edge $(4,2)$ there are $\text{gcd}(4,2,2)=2$ copies of strings of type $G(2,1,1)$. We solve $1+x \cdot 2 \equiv 0 \; (\text{mod}\; 1)$, and find that there are no additional vertices in the strings. Hence above the edge $(4,2)$ we find 2 edges each of whose end points have multiplicities $2$ and $1$. Finally, above the arrow in the tree $\mathcal{T}$, there is one string of type $G(10,1,2)$ whose one end point is the vertex of multiplicity 5, and the other end is the arrowhead. We solve $1+x \cdot 5 \equiv 0 \; (\text{mod}\; 1)$, since $x_1=0$ we have no additional vertices between the vertex of multiplicity 5 and the arrowhead. Then, from the algorithm we compute the self intersections of all the vertices (see formula $(*)$ on p.13, \cite{Nemethi-lectures}). This gives us the graph as shown in part a) of Figure~\ref{phi12}. After blowing down the $-1$ spheres we obtain the configuration as in part b), Figure~\ref{phi12}. We note that this is exactly the singular fiber of type IX-1 in the Namikawa-Ueno's notation in \cite{NamikawaUeno-list}. We also remark that Namikawa-Ueno obtain these genus two singular fibers in \cite{NamikawaUeno-long} by orbifold quotients, whereas here we construct them in a different way.
\begin{figure}[htb]
{\includegraphics[width=9cm]{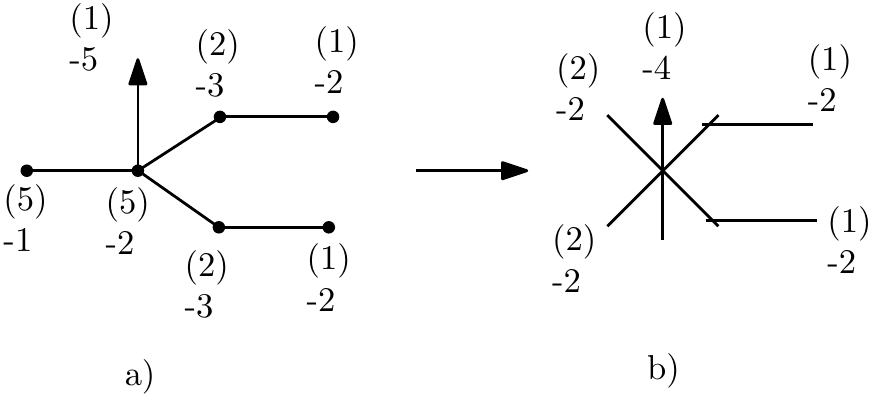}}
\caption{$\phi_1^2$ - Fiber IX-1}
\label{phi12}
\end{figure}

\noindent {\boldmath $k=3, \cdots, 10$}: 
To find the resolutions of $y^2 = x^5+t^k$, $k=3, \cdots, 10$, we follow the same steps as in the $k=2$ case above. To the tree $\mathcal{T}$, we apply the Nemethi's algorithm with $N=k$. We summarize them in Figure~\ref{phi1ktable} where the second column consists of the graphs after applying Nemethi's algorithm with all the self-intersections and multiplicities of the components. Let us note that in the last row we have that the central vertex has genus two. The third column of the table shows the resulting graphs after consecutive blow-downs. In the third column we also note the types of the fibers as in the Namikawa-Ueno's notation.
\begin{figure}[htb]
\centering
{\includegraphics[width=15cm]{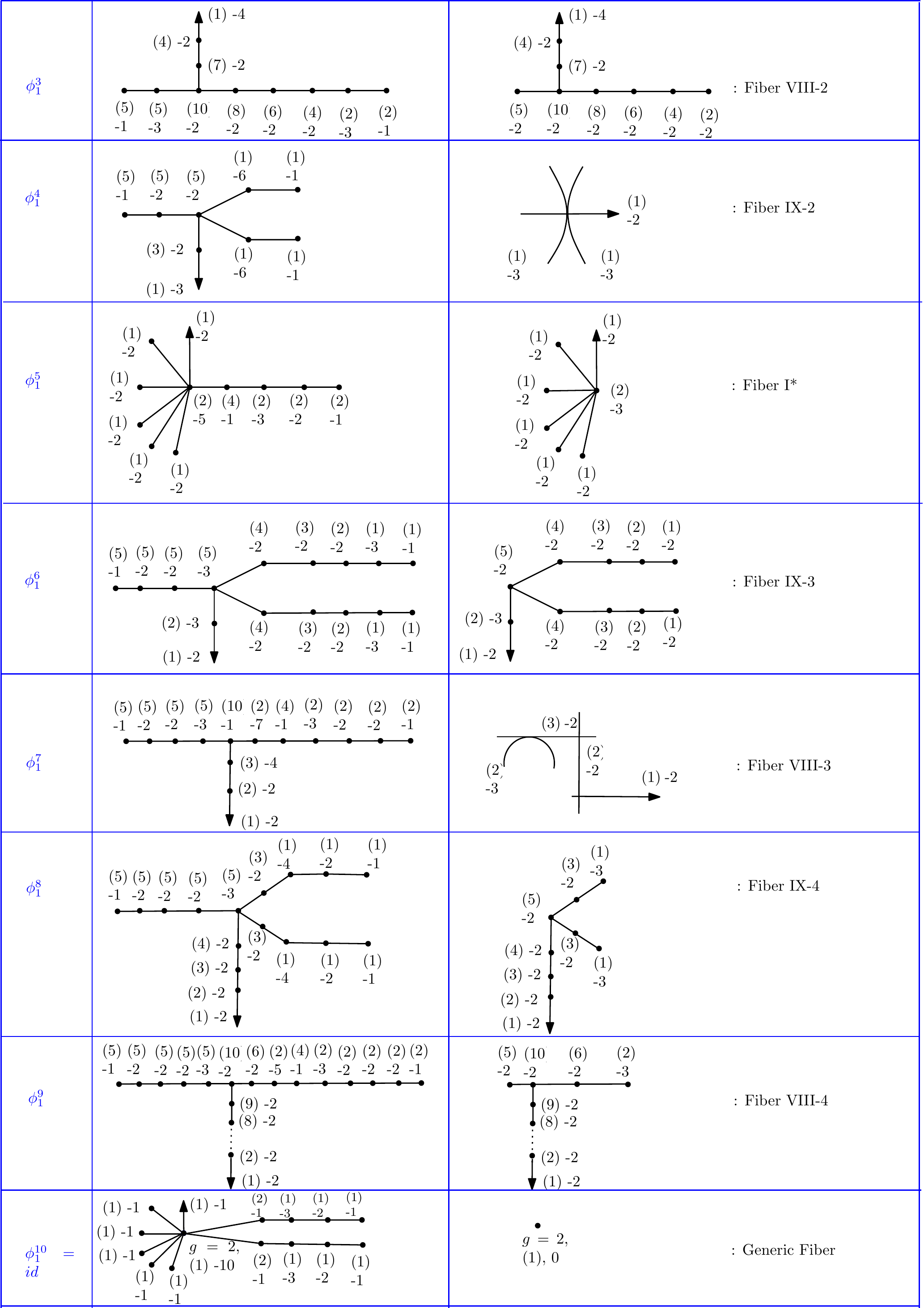}}
\caption{$\phi_1^k$, $k=3, \cdots, 10$}
\label{phi1ktable}
\end{figure}

\subsection{\texorpdfstring{{\boldmath $\phi_3^k$: $y^2 = x^6+t^k$}}{Phi3K}}
\label{phi_3s}
To find the resolution graphs of $\phi_3^k$, we proceed as in the $\phi_1^k$ case above. We first resolve the plane curve singularity $x^6-y^2 =0$ by successive blow-ups and obtain a tree $\mathcal{D}$ with two arrows and three vertices with multiplicities 6, 4, 2 and self intersections $-1,-2,-2$, respectively, as shown in the first step of Figure~\ref{phi3}.

\noindent {\boldmath $k=1$}:  
To find the resolution of $y^2 = x^6+t$, we apply the Nemethi's algorithm with $N=1$ to the tree $\mathcal{D}$. The degree of the cover of $\mathcal{D}$ is $N=1$, so we just blow down the $-1$ curves of $\mathcal{D}$ consecutively. As a result we obtain two 2-spheres of self intersections $-3$ and multiplicities $1$, intersecting each other once with intersection multiplicity 3 (see Figure~\ref{phi3}). Thus we obtain the fiber of type V in the list of Namikawa-Ueno in \cite{NamikawaUeno-list}.
\begin{figure}[htb]
{\includegraphics[width=15cm]{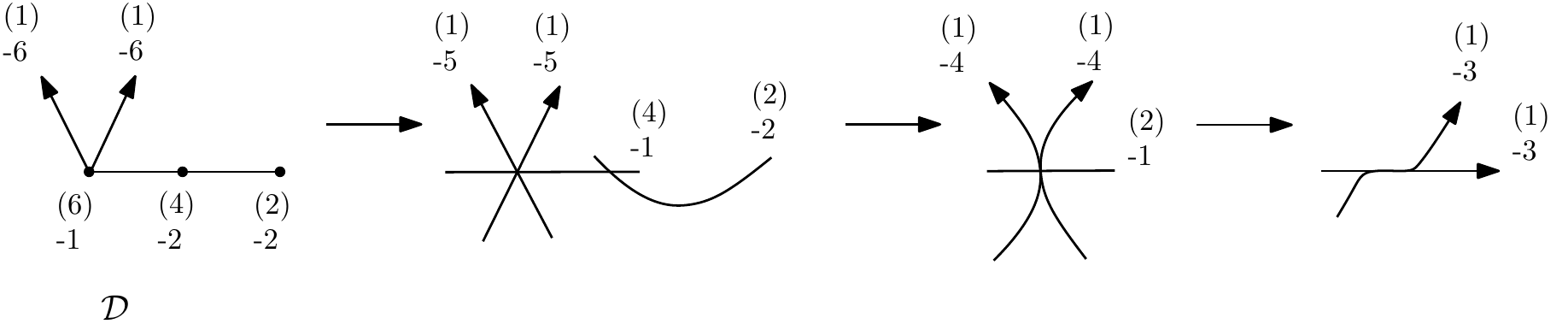}}
\caption{$\phi_3$ - Fiber V}
\label{phi3}
\end{figure}

\noindent {\boldmath $k=2, \cdots, 6$}:  
We find the resolution graphs of $y^2 = x^6+t^k$, $k=2, \cdots, 6$ by applying the Nemethi's algorithm with $N=k$, to the tree $\mathcal{D}$. We summarize them in Figure~\ref{phi3ktable} where the second column consists of the graphs after applying Nemethi's algorithm, and the third column shows the resulting graphs after consecutive blow-downs. Note that in the first case, $\phi_3^2$, after applying Nemethi's algorithm we directly get a minimal graph and it is fiber of type III. Also note that the resulting minimal resolution graphs of $\phi_3^2$ and $\phi_3^4$ both give the type III fiber.\\
\begin{figure}[htb]
\centering
{\includegraphics[width=15cm]{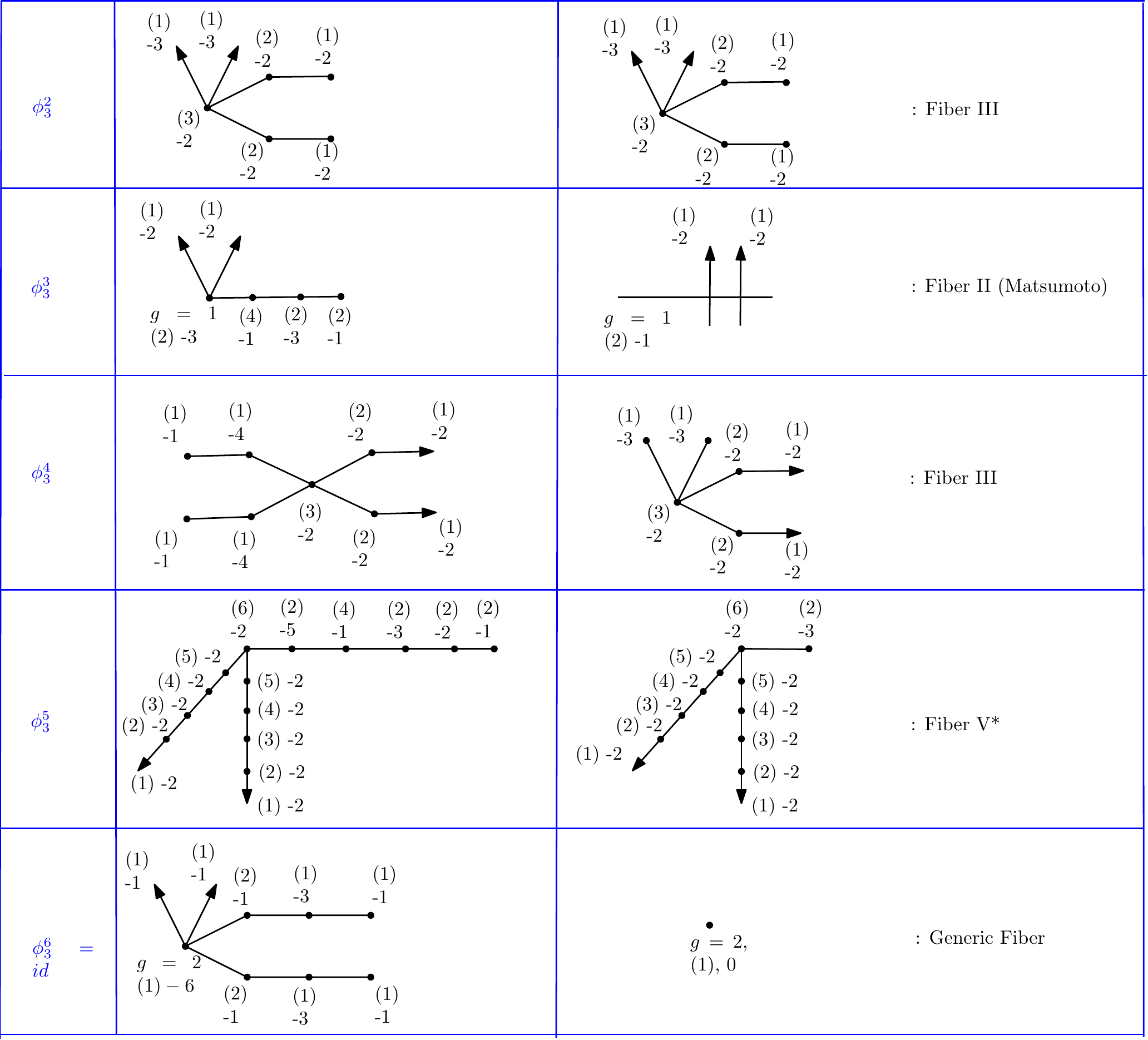}}
\caption{$\phi_3^k$, $k=2, \cdots, 6$}
\label{phi3ktable}
\end{figure}

\noindent {\bf Taking degree two covers}

In the previous subsections we started with the plane curve singularities on the $xy$-plane and resolved them by successive blow-ups. In the resulting graphs, the arrows are the proper transforms of the starting singular curves. Then to those graphs we applied Nemethi's algorithm (\cite{Nemethi-algorithm}, Section 3.6 and Theorem 3.7 a)) with different values of $N$ in each case (where $N$ is the power of $t$ in the polynomials); i.e., we took different degree covers of the graphs in each case. Hence we constructed 13 types of the Namikawa-Ueno fibers as listed in the figures~\ref{phi1}, ~\ref{phi12} ~\ref{phi1ktable}, ~\ref{phi3}, ~\ref{phi3ktable}. 

However, to find the resolution graphs in the following subsections we start with the plane curve singularities on the \emph{$xt$-plane} and resolve them by consecutive blow-ups. Then to the resolution graphs we apply Nemethi's algorithm with \emph{$N=2$} (which is the power of $y$ in all of the polynomials); i.e., we take degree 2 covers of the graphs in all of the following cases. Also in the remaining subsections we drop the arrows and the multiplicities of the irreducible components of the resolution graphs. For this version of the algorithm, see \cite{Nemethi-algorithm}, Theorem 3.7 b).

\subsection{\texorpdfstring{{\boldmath $\phi_2^k: y^2 = x(x^4+t^k)$}}{Phi2K}}
\label{sec:phi2kres}

\noindent {\boldmath $k=1$}: We first resolve $x(x^4+t)$ on the $xt$-plane by iterated blow-ups which gives us the first graph in Figure~\ref{phi2}. Then, to this graph we apply Nemethi's algorithm with $N=2$ to resolve $y^2 = x(x^4+t)$. The second graph in Figure~\ref{phi2} shows the resulting configuration. Next, we drop the arrows as shown in the third step of the figure. Finally, we blow down the $-1$ curves successively and obtain a 2-sphere of square $-2$. In the last step we show that this is indeed the subset of type VII fiber of Namikawa-Ueno, which is given by the same polynomial $y^2 = x(x^4+t)$. We show the missing component, the $(2,3)$-cusp, in blue in the picture. 
\begin{figure}[htb]
{\includegraphics[width=15cm]{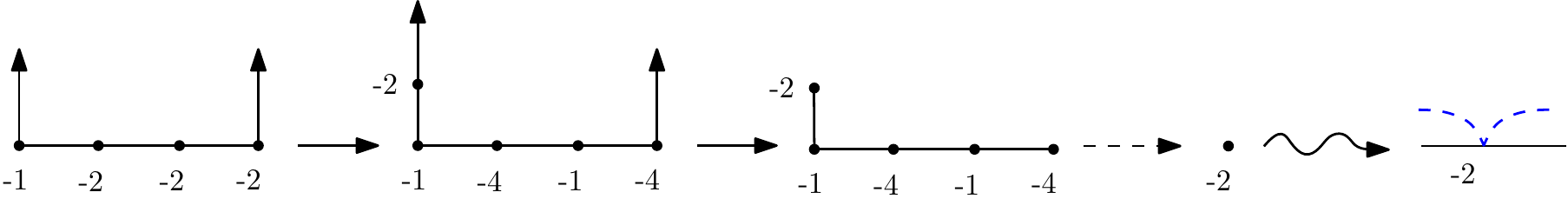}}
\caption{$\phi_2$ and Fiber VII}
\label{phi2}
\end{figure}

\noindent {\boldmath $k=2, \cdots, 8$}: To find the resolutions of $y^2 = x(x^4+t^k)$ for $k=2, \cdots, 8$, we proceed as in the $k=1$ case above. We summarize them in Figure~\ref{phi2ktable}. The first column shows the resolution graphs of $x(x^4+t^k)$ on the $xt$-plane. Then we take degree two covers of these graphs, i.e., we apply the Nemethi's algorithm with $N=2$, to the graphs of $x(x^4+t^k)$. The second column shows the resulting resolution graphs of $y^2 = x(x^4+t^k)$ after applying the algorithm. Next, we drop the arrows and blow down all $-1$ spheres. We show these graphs in the last column of Figure~\ref{phi2ktable} in black. We note that they are subsets of certain types of the Namikawa-Ueno fibers. In the last column, we note the types of the Namikawa-Ueno fibers that these graphs are subsets of. The missing components are depicted in blue.
\begin{figure}[htb]
{\includegraphics[width=15cm]{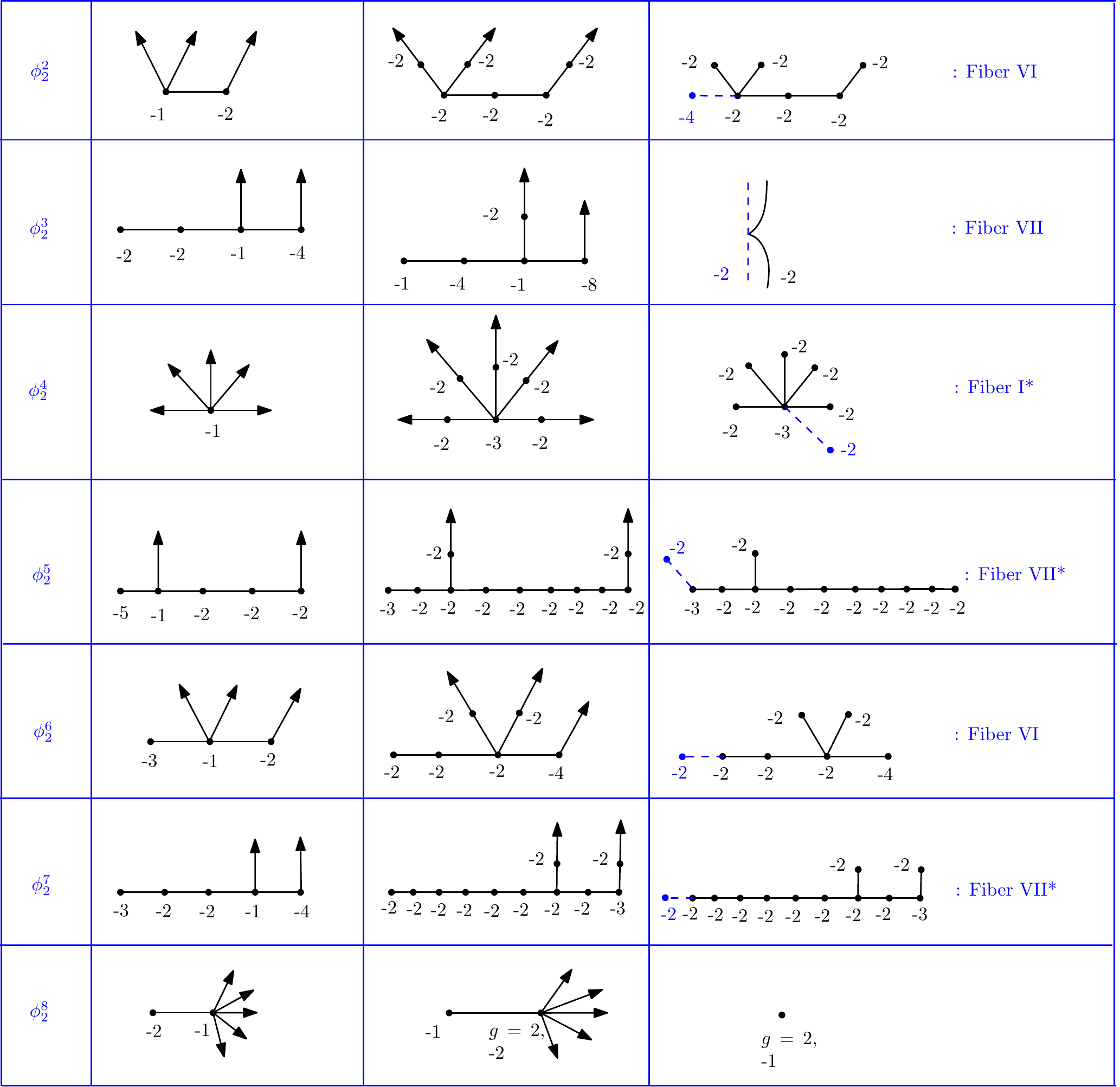}}
\caption{$\phi_2^k$, $k=2, \cdots, 8$}
\label{phi2ktable}
\end{figure}

\subsection{\texorpdfstring{{\boldmath $\phi_4^k: y^2 = x(x^5+t^k)$, $k=1, \cdots, 5$}}{Phi4K}}
To find the resolutions of $y^2 = x(x^5+t^k)$ for $k=1, \cdots, 5$, we proceed as in the $\phi_2^k$ case above. We summarize them in Figure~\ref{phi4ktable}. The first column shows the resolution graphs of $x(x^5+t^k)$ on the $xt$-plane. Then we take degree two covers of these graphs, i.e., we apply the Nemethi's algorithm with $N=2$, to the graphs of $x(x^5+t^k)$. The second column shows the resulting resolution graphs of $y^2 = x(x^5+t^k)$ after applying the algorithm. Next, we drop the arrows and blow down all $-1$ spheres. We show these graphs in the last column of Figure~\ref{phi4ktable}.
\begin{figure}[htb]
{\includegraphics[width=15cm]{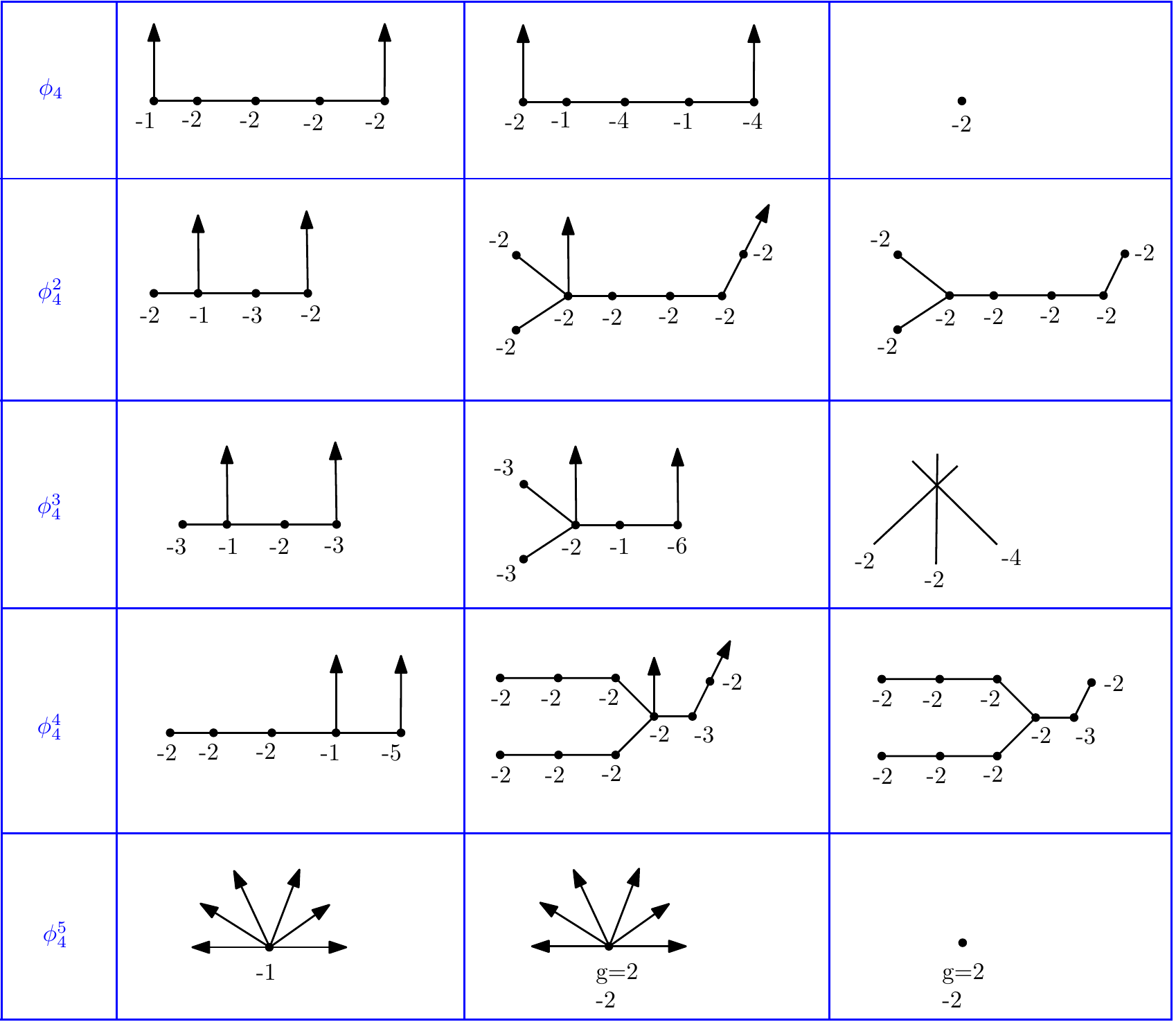}}
\caption{$\phi_4^k$, $k=1, \cdots, 5$}
\label{phi4ktable}
\end{figure}

\subsection{Resolutions of lower genus boundary twists} In this last section we consider 4 special cases. To find the resolutions we follow the steps as in $\phi_2^k$ and $\phi_4^k$ cases above. Namely, we first resolve the plane curve singularities on the $xt$-plane. Then we apply the Nemethi's algorithm with $N=2$. Next, we drop the arrows and blow down all $-1$ spheres.

\noindent {\boldmath $y^2= x^3 +t^6$}: We first resolve $x^3 +t^6=0$ and obtain the first graph in Figure \ref{36}. After taking the degree 2 cover via Nemethi's algorithm, we get the second graph in the same figure. Then we drop the arrows and blow-down the $-1$ sphere. Hence we get a vertex of genus one and self intersection $-1$.
\begin{figure}[htb]
{\includegraphics[width=12cm]{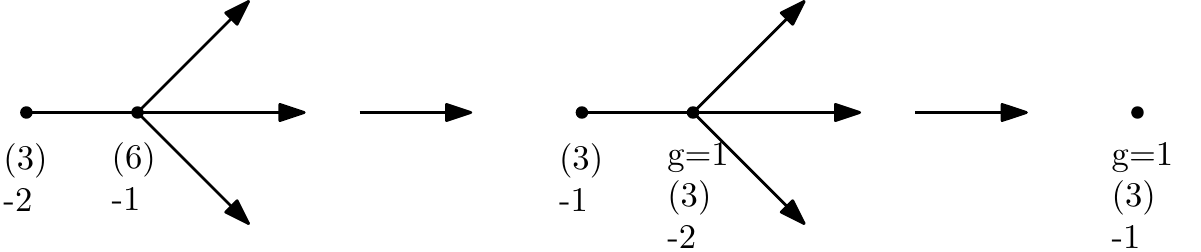}}
\caption{Resolutions of $y^2= x^3 +t^6$ and $y^2= x(x^2 +t^4)$}
\label{36}
\end{figure}

\noindent {\boldmath $y^2= x(x^2 +t^4)$}: This case is identical to the previous case. When we resolve $x(x^2 +t^4)$, we again obtain the first graph of Figure \ref{36}. Therefore, after applying the same steps we get a vertex of genus one and self intersection $-1$.

\noindent {\boldmath $y^2= x^4 +t^4$}: \label{res:bdry44} We first resolve $x^4 +t^4=0$ and obtain the first graph in Figure \ref{44}. After taking the degree 2 cover via Nemethi's algorithm, we get the second graph in the same figure. Then we drop the arrows and hence, we get a vertex of genus one and self intersection $-2$.
\begin{figure}[htb]
{\includegraphics[width=9cm]{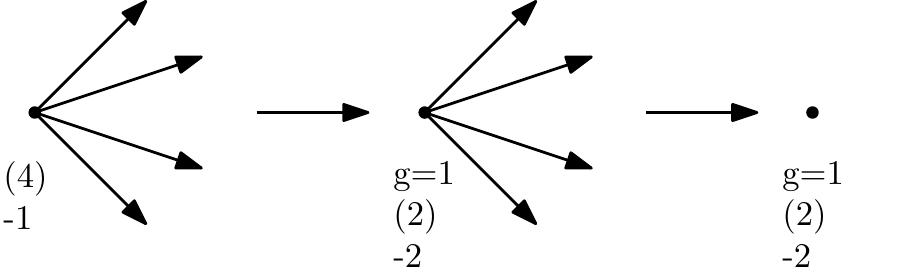}}
\caption{Resolutions of $y^2= x^4 +t^4$ and $y^2= x(x^3 +t^3)$}
\label{44}
\end{figure}

\noindent {\boldmath $y^2= x(x^3 +t^3)$:} Once again, this case is identical to the previous case. When we resolve $x(x^3 +t^3)$, we obtain the first graph of Figure \ref{44}, as in the resolution of $y^2= x^4 +t^4$. Therefore, at the end we again obtain a vertex of genus one and self intersection $-2$.

\clearpage

\section{First examples: \texorpdfstring{$\phi_1$}{ϕ1} and \texorpdfstring{$\phi_1^2$}{ϕ1\textasciicircum 2}}\label{sec:phi1and2}  

\subsection{\texorpdfstring{{\boldmath $\phi_1$}}{Phi1}}\label{sec:phi1} 
We begin this section with a quick outline of how we construct the deformation from the fibration with a singular fiber to a Lefschetz fibration. We refer to this process as \emph{splitting}. $X$ will be the resolution of some affine variety $V$ defined by a polynomial in $\CC^3$. We will always use polynomials of the form $y^2 - f(x,t)$ so we write $V(f)$ for this variety and $X_f$ for its resolution. Note that by construction, $V(f)$ always admits the hyperelliptic involution given by $(x,y,t)\to (x,-y,t)$ and that structure will be essential in characterizing our splitting. Additionally, $V(f)$ will have the structure of a fibration $\pi_t:V(f) \to \CC$ by projecting onto the $t$ coordinate and for the polynomials we consider, all non-singular fibers will be smooth curves of genus 2 with one or two boundary components. The fiber over $t=0$ will be singular with an isolated singularity at the origin which is always more degenerate than a standard Lefschetz singularity. Usually $V(f)$ will itself be a singular algebraic surface, in which case we will resolve $V(f)$ to $X_f$. The fibration $\pi_t$ lifts to $X_f$ and the fiber over $0$ carries the plumbing tree of the resolution. We want to deform $(X_f, \pi_t)$ through symplectic fibrations to a Lefschetz fibration $(X_s, \tilde{\pi})$ and record the vanishing cycles along with the identifications of the reference fibers of the two fibrations, that is, we want to find a \emph{splitting} of the singular fibration $\pi_t$ on $X_f$.

As a first example, we consider the case $y^2 - (x^5 + t)$. Checking the derivatives, $\p /\p t = 1 \neq 0$ so $V(f)$ is already a smooth subvariety of $\CC^3$ and $X=X_f = V(f)$. $X$ admits a fibration $\pi_t$ (by intersecting with the hyperplanes $t = const$). This fibration is smooth except over $t=0$. Fibers are given as the hypersurface quintics $y^2-x^5 = -c$ in $\CC^2$ identified with (and subsets of) the hyperplane $t=c$. For $t\neq 0$, these are all smooth genus 2 surfaces in $\CC^2$ and the link with $S^3s$ of large radius is a $(2,5)$-torus knot. At $t=0$ the fiber is a cuspidal quintic, $y^2 = x^5$. This is the singularity associated to the $(2,5)$-torus knot. Indeed, $X = \CC^2$ and the fibration $\pi_t$ is the Milnor fibration associated to the $(2,5)$-torus knot: $\pi(x,y) = x^5-y^2$.

Associated to the polynomial $f$, we have a smooth algebraic variety $X$ and a fibration by genus 2 surfaces (with one boundary component) which is smooth except for the central fiber $t=0$. We want to deform this fibration structure (and possibly the algebraic structure on $X$) to one in which all singular fibers are of Lefschetz type. To do this, we make use of the hyperelliptic nature of $f$: $X$ admits a hyperelliptic map $h$ (so $\pi_t \circ h = \pi_t$). This map is given explicitly as $y\mapsto-y$. The $h$ action is not free but the quotient is $X/h = \CC^2$ and the branch locus is given by $\Delta = \{y=0\} = \{0 = x^5 + t\}$. This is a smooth subvariety of $\CC^2$, topologically a disk bounded by the $(5,1)$-torus knot, i.e., the unknot. The double branched cover of $\CC^2$ over $\Delta$ is again $\CC^2$. From this perspective, the fibration is singular at $t=0$ because $\Delta$ is tangent to the hyperplane $t=0$ (to order 5).

To split the singularity at $t=0$, we algebraically deform $\Delta$: $\Delta_s = \{ x^5 + sx + t = 0\}$ and lift the corresponding deformation to $V$ and $X$. Each $\Delta_s$ is still smooth and is isotopic to $\Delta$ through smooth subvarieties of $\CC^2$. The corresponding family $X_s$ built by taking the double cover of $\Delta_s$ consists of deformation equivalent copies of $\CC^2$ but where the fibration $\pi_t$ changes. For $s\neq 0$, $\Delta_s$ has 4 points of tangency with the $t$ fibers which occur at the points $$(t,x) = \left( \frac{-4s}{5} \left(\frac{-s}{5}\right)^{1/4}, \left(\frac{-s}{5}\right)^{1/4} \right)$$

Each of these points $(c,p)$ is tangent of order 2: $\Delta_s$ is given by $ \{(t-c) = C(x-p)^2(x-p_1)(x-p_2)(x-p_3)\}$ for some nonzero value of $C$ and distinct values of $p,p_1,p_2,p_3$. On the double branched cover $X_s$ we have a Lefschetz singularity near the lift of $(c,p)$ (with local model $y^2 - x^2 = 0$ in suitable coordinates). Looking at the singular values of the fibration in the $t$-plane, as you increase $s$ from $0$ to $1$, the singular fiber at $t=0$ splits into 4 different places of simple branching lying along the rays with polar angles $\pm \pi/4$ and $\pm 3\pi/4$ and having radius equal to $4 \frac{|s|^{5/4}}{5^{5/4}}$. All the fibrations for $s\neq 0$ are isomorphic Lefschetz fibrations. We want to give a full description of $\pi^s_t$ on $X_s$. For us, that means a description of the Lefschetz fibration on $X_s$, usually as an ordered list of the \emph{vanishing cycles} of $\pi^s_t$ along with an identification of the reference fiber $(\pi^s_t)^{-1}(1)$ with the reference fiber $\pi_t^{-1}(1)$ of $X$. 

To understand the branching of $\Delta_s$ completely, we the traverse the circles of radii close to $4 \frac{|s|^{5/4}}{5^{5/4}}$ in the $t$-plane and watch the braiding and collisions that happen in $\Delta_s$ (that is, in the intersection of $\Delta_s$ with $(\pi^s_t)^{-1}(p)$). At $s=0$, $\Delta$ over $p = 1$ consists of the 5th roots of $-1$. As $s$ increases, these deform slightly, with the real root moving from $-1$ to $-4/5^{5/4}$. Fixing a non-zero value of $s$, we will traverse the path in the $t$-plane from $1$ to $4 \frac{|s|^{5/4}}{5^{5/4}}$ along the real line, followed by a counterclockwise traversal of the circle of radius $4 \frac{|s|^{5/4}}{5^{5/4}}$, and then finally back to $1$, again along the real line. Moving in and out along the real line just slides the intersection point at -1 in and out along the real axis. As we traverse the circle, the point along the real axis follows a counter clockwise path, colliding and braiding with the 4 other points in order. Label the moving point 1 and the rest in counter clockwise order are 2, 3, 4, 5. Pulling these collisions back to the reference point at $t=1$, we see that there are four bands being added. The first is between 1 and 2 corresponding to a disk band and the next has the moving point continuing counterclockwise to before braiding with 3 and continuing on to 4 and then finally 5. If we want to write this factorization in the braid group, though, we need fixed reference points for the braiding and we need to choose anchoring arcs in the $t$-plane connecting $t=1$ to each of the four points where branching occurs. To match this with the collisions we see above, we take counterclockwise arcs of fixed radius starting at $1$, stopping at the ray through the corresponding branch point and then traveling inward along the ray until we hit the branch point. We assume $s$ is a small real number so that the branch points occur at points in $\CC$ of small radius. As we approach the branch point we see two of the marked points collide, giving the local braiding and a band for the surface $\Delta_s$. To complete this to a quasipositive factorization, we mark the arc that indicates the paths of the two points as they collide and then we pull that arc back to $t=1$ via the path above. That arc gives a quasipositive half twist in the braid description of the $(5,1)$ torus knot and the quasipostive band description of the perturbed branch surface $\Delta_s$.

The paths in the $t$-plane described above are shown in Figure~\ref{fig:phi11} along with the corresponding bands pulled back to the reference fiber at $t=1$. Under the deformation of $s$, the fibration $X$ splits into a Lefschetz fibration associated to the positive braid with vanishing cycles coming from the braid twists $a_1 a_2 a_3 a_4$ (which, as a braid, is written left to right). The corresponding Lefschetz fibration has a word in Dehn twists as $$\phi_1 = \tau_1 \tau_2 \tau_3 \tau_4$$ (again written left to right) where the $\tau_i$ are the Dehn twists in the standard chain, identified with the fiber above $t=1$ using the skeleton shown in Figure~\ref{fig:phi11}.

\begin{figure}
\includegraphics[width=5.5in]{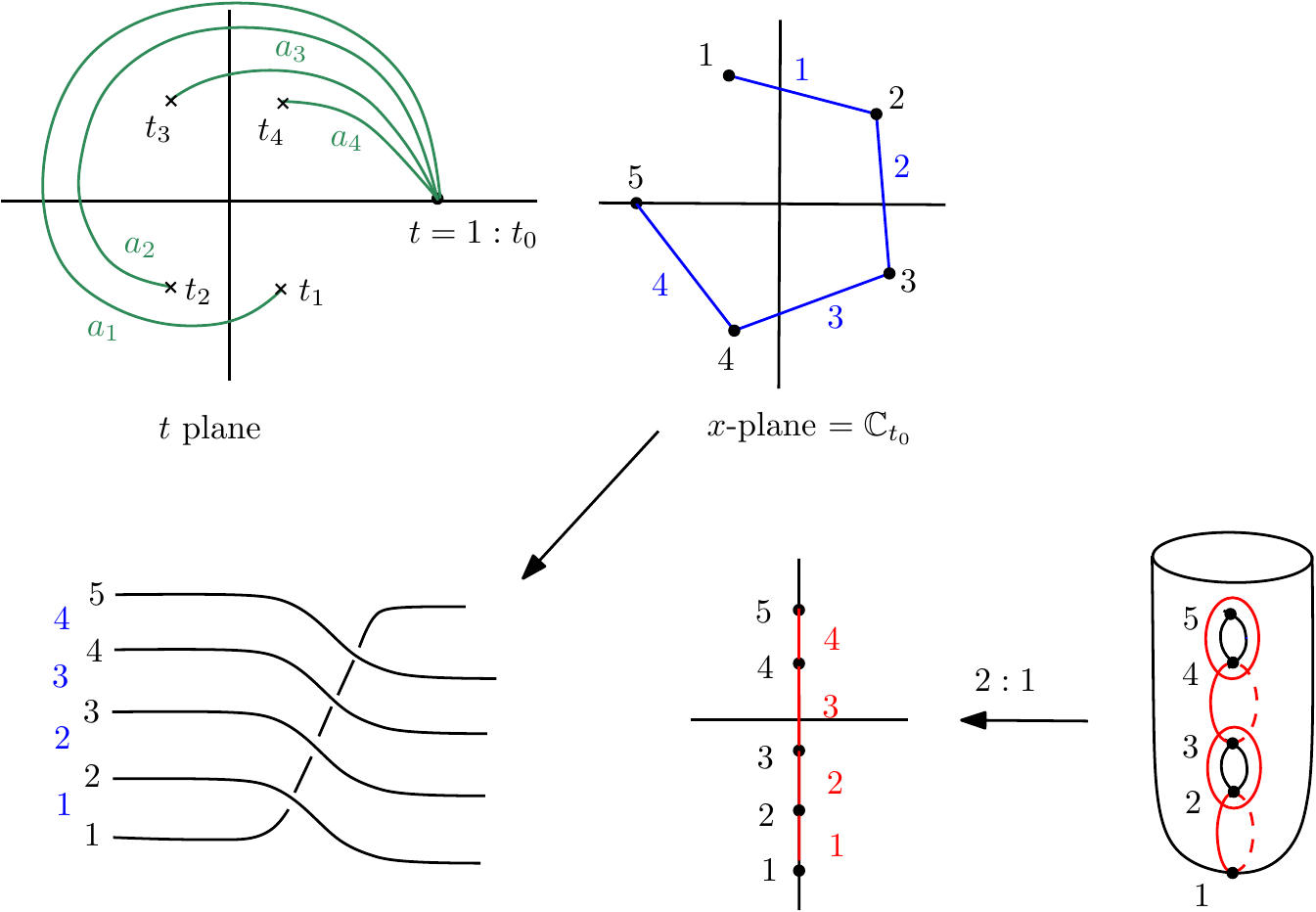} 
\caption{The quasipositive braid factorization $(q_i)$ associated to $\Delta_s$ for $f(x,t) = x^5 + t$. Top: On the left we see the paths chosen in the $t$-plane and on the right the corresponding arcs $q_i$ along which the braiding occurs. Below that we see the choice of the double branched cover (indicated by a spine) and the corresponding factorization in the mapping class group of the genus 2 surface. At the bottom left corner we see the corresponding quasipositive factorization of the braid formed by intersecting $\Delta$ with the boundary of the polydisk in $\mathbb{C}^2$. The half-twists are labeled by numbers to indicate the order they occur in the braid word. As noted in Lemma~\ref{lm:phi1rotate}, this factorization is Hurwitz equivalent to each of those obtained by rotating the given factorization about the origin through an angle of any multiple of $2\pi/5$.}
\label{fig:phi11}
\end{figure}

\begin{lemma} \label{lm:hurwitz} Given a braid factorization $B = B_f ab B_e$ with quasipositive bands $a$ and $b$ as shown in Figure~\ref{fig:braidhurwitz}, $B$ is Hurwitz (or band) equivalent to both $B_f b c B_e$ and $B_f c a B_e$, where the local picture of $a$, $b$ and $c = b(a) = a^{-1}(b)$  is as shown in the figure. The same holds for the Dehn twist factorization in the double branched cover.
\end{lemma}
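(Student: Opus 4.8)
The plan is to recognize the two claimed equivalences as nothing more than the two elementary Hurwitz moves recalled in Section~\ref{sec:backg}, applied to the adjacent pair $ab$, and then to identify the band produced by each move with the single band $c$ drawn in Figure~\ref{fig:braidhurwitz}. The one geometric input that makes the statement work is that, as the figure shows, $a$ and $b$ are half-twists along arcs meeting in exactly one endpoint. I would use precisely this to establish the braid relation $aba = bab$ between the two bands, and that relation is exactly what forces the two a priori different outputs of the two moves to be the same band $c$.

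First I would fix notation, writing $a$ for the half-twist along an arc from $P$ to $Q$ and $b$ for the half-twist along an arc from $Q$ to $R$, with $Q$ the shared endpoint as in the figure. Since the two arcs meet only at $Q$, the standard local model for two half-twists sharing a single branch point yields the braid relation $aba = bab$. Rearranging this relation gives the algebraic identity $aba^{-1} = b^{-1}ab$, and I would then read off from the figure that the common value of these two expressions is exactly the half-twist along the arc from $P$ to $R$ obtained by dragging the endpoint of one arc along the other; this is what the notation $c = b(a) = a^{-1}(b)$ encodes once one unwinds the left-to-right conventions of Section~\ref{sec:backg}, where $b(a) = b^{-1}ab$ and $a^{-1}(b) = aba^{-1}$.

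With the braid relation in hand the two equivalences are immediate. The first Hurwitz move applied to the subword $ab$ replaces the pair $(a,b)$ by $(aba^{-1}, a) = (c,a)$, giving $B \sim B_f\, c\, a\, B_e$. The second Hurwitz move applied to $ab$ replaces $(a,b)$ by $(b, b^{-1}ab) = (b,c)$, giving $B \sim B_f\, b\, c\, B_e$. Both moves leave $B_f$ and $B_e$ fixed and preserve the underlying product, so no further bookkeeping is needed, and since these are honest Hurwitz moves the conclusion already holds at the level of Hurwitz equivalence (hence a fortiori band equivalence). The final sentence I would obtain by transporting the whole argument up the double branched cover: each quasipositive band lifts to a right-handed Dehn twist, a Hurwitz move of bands lifts to the matching Hurwitz move of twists, and two arcs meeting once lift to two curves meeting once, so the braid relation lifts to $\tau_a\tau_b\tau_a = \tau_b\tau_a\tau_b$ in $\mathrm{MCG}(\Sigma_{2,\epsilon})$; the identical chain of two moves then runs verbatim with $c$ replaced by $\tau_c$.

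The main obstacle is not the algebra, which is a one-line consequence of the braid relation, but the bookkeeping in the figure: one must verify directly, with a fixed over/under convention, that the band $c$ arising as the output of the first move (an arc pushed off $a$) is literally the same as the band arising from the second move (an arc pushed off $b$). This coincidence is precisely the braid relation $aba^{-1} = b^{-1}ab$, and keeping the orientation and crossing conventions consistent between the braid picture and its lift is the only place where genuine care is required.
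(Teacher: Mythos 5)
Your argument is correct and is exactly the one the paper intends: the paper states this lemma without a written proof, deferring to Figure~\ref{fig:braidhurwitz} and the definition of the two Hurwitz moves, and your explicit justification (the two moves produce $aba^{-1}$ and $b^{-1}ab$, which coincide with the single band $c$ precisely because arcs sharing one endpoint satisfy the braid relation $aba=bab$, all of which lifts verbatim to Dehn twists in the double branched cover) is the intended content made precise.
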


\begin{lemma} \label{lm:phi1rotate} The factorization $\phi_1 = \tau_1 \tau_2 \tau_3 \tau_4$ is Hurwitz equivalent to all other factorizations of $\phi_1$ given by rotating the braid factorization in Figure~\ref{fig:phi11} by some multiple of $2\pi/5$ and lifting via the branched double cover. More explicitly, since the monodromy $\phi_1$ is represented by this rigid rotation by $2\pi/5$, we have that $\phi_1^k (\phi_1)$ is Hurwitz equivalent to $\phi_1$.
\end{lemma}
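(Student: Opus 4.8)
The plan is to reduce the geometric statement about rotating the branch diagram to a single algebraic fact about positive factorizations: \emph{global conjugation by the total monodromy is a Hurwitz equivalence}. As established in the proof of Corollary~\ref{cor:1}, the total braid obtained by intersecting $\Delta$ with the boundary of the polydisk is the $(5,1)$-torus braid $a_1a_2a_3a_4$, and this braid is precisely the rigid rotation $R$ of the five branch points in the fiber by $2\pi/5$; its lift to the double branched cover is $\phi_1=\tau_1\tau_2\tau_3\tau_4$. Rotating the diagram of Figure~\ref{fig:phi11} by $2\pi/5$ therefore replaces each band $a_i$ by its $R$-image $Ra_iR^{-1}$ (and each vanishing cycle $\gamma_i$ by $R(\gamma_i)$ upstairs), producing the factorization $R(a_1)R(a_2)R(a_3)R(a_4)$, which is exactly the image of $a_1a_2a_3a_4$ under global conjugation by $R=a_1a_2a_3a_4$. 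Since the lift of $R$ is $\phi_1$, the lifted rotated factorization is $\phi_1(\tau_1)\phi_1(\tau_2)\phi_1(\tau_3)\phi_1(\tau_4)=\phi_1(\phi_1)$, and a rotation by $k\cdot 2\pi/5$ gives $\phi_1^k(\phi_1)$.

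It then remains to prove that $\phi_1^k(\phi_1)$ is Hurwitz equivalent to $\phi_1$, and for this I would prove the general statement first at the level of braids. Let $W=t_1\cdots t_n$ be a positive factorization with total product $P=t_1\cdots t_n$. Using the band Hurwitz move of Lemma~\ref{lm:hurwitz} repeatedly, I would move the last term $t_n$ leftward past $t_{n-1},\dots,t_1$ to the front; each move conjugates it, and the net effect is the single cyclic rotation $t_1\cdots t_n \to (Pt_nP^{-1})\,t_1\cdots t_{n-1}$, where I use $t_1\cdots t_{n-1}=Pt_n^{-1}$ to simplify the accumulated conjugator to $P$. Performing this cyclic rotation $n$ times returns a factorization with the same terms in the same order but with every term conjugated once by $P$, namely $(Pt_1P^{-1})\cdots(Pt_nP^{-1})=P(W)$. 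Hence $P(W)$ is Hurwitz equivalent to $W$. Because Hurwitz moves and band moves lift through the double branched cover (Lemma~\ref{lm:hurwitz}), the same conclusion holds verbatim for positive Dehn twist factorizations.

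Applying this to $W=\phi_1$ with $P=\phi_1$ gives that $\phi_1(\phi_1)$ is Hurwitz equivalent to $\phi_1$. The general power follows by induction: each $\phi_1^{j}(\phi_1)$ is again a factorization of $\phi_1$, since its total product is $\phi_1^{j}\phi_1\phi_1^{-j}=\phi_1$, so applying global conjugation by its total product $\phi_1$ carries it, up to Hurwitz equivalence, to $\phi_1^{j+1}(\phi_1)$; chaining these equivalences down to $j=0$ yields that $\phi_1^k(\phi_1)$ is Hurwitz equivalent to $\phi_1$ for every $k$. This covers all multiples of $2\pi/5$.

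The routine part is the cyclic-rotation computation; the step that needs genuine care is the first one, identifying the geometric operation ``rotate Figure~\ref{fig:phi11} by $2\pi/5$ and lift'' with the algebraic operation ``globally conjugate by $\phi_1$.'' The main obstacle is the bookkeeping here: one must check that traversing the large circle in the $t$-plane rotates the fiber configuration by exactly $2\pi/5$, so that $R$ equals the total monodromy with no extra shift, and that the rotation acts on the chosen anchoring arcs and on the lifted skeleton compatibly, so that the rotated bands assemble into $R(a_1)R(a_2)R(a_3)R(a_4)$ rather than some reindexed or reanchored variant. Once this identification is pinned down, the remaining content is exactly the global-conjugation lemma above.
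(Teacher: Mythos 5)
Your proof is correct, and it is essentially the argument the paper has in mind: the paper's own proof opens by observing that one can always conjugate a monodromy by itself by appending a full counterclockwise loop around all singular values to each reference path, which is precisely your identification of the $2\pi/5$ rotation with global conjugation by the total monodromy. The difference is in the direct Hurwitz-move realization. You prove the general fact that for any positive factorization $W = t_1\cdots t_n$ with total product $P$, the globally conjugated word $P(W)$ is reached by $n$ cyclic rotations, each implemented via Lemma~\ref{lm:hurwitz} (and your bookkeeping $t_1\cdots t_{n-1}=Pt_n^{-1}$ is right). The paper instead exploits the pentagon picture and performs a single cyclic slide, moving the first twist past the other three so that its arc lands on the empty fifth edge; for this nearly rotation-symmetric configuration that one slide already produces the rotated word, so the paper's direct proof is shorter but specific to the picture. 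Your route costs more moves but buys a configuration-independent statement ($P(W)$ is always Hurwitz equivalent to $W$) that would handle the later rotation lemmas (\ref{lm:phi4Trotate}, \ref{lem:phi3rotate}, \ref{lem:phi2rotate}, \ref{lem:phi4rotate}) uniformly, and since your cyclic rotations are themselves composed of Hurwitz moves, the equivalence is still Hurwitz equivalence ``in the strongest sense'' as the paper's remark following the lemma requires.
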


Note: Here we really mean Hurwitz equivalent in the strongest sense: only Hurwitz moves are allowed or needed and neither cyclic permutations nor global conjugation is used in the proof of the theorem. This is particularly important when we use this word to  compose/decompose factorizations of compound singularities later on. 

\begin{proof} Looking at this from the perspective of a solitary singularity, this is obvious. We can always conjugate a monodromy by itself by taking the paths used to identify the vanishing cycles and concatenate all with a full counterclockwise loop enclosing all the singular values. However, because it's not complicated, we include a separate, direct proof. Repeatedly applying Lemma~\ref{lm:hurwitz} we can slide the first Dehn twist in the factorization of $\phi_1$ around to the empty edge of the pentagon. (And similarly we could also use Hurwitz moves to slide the last Dehn twist around to the empty edge.)
\end{proof}

\begin{figure}  
\begin{center}
\includegraphics[width=4.5in]{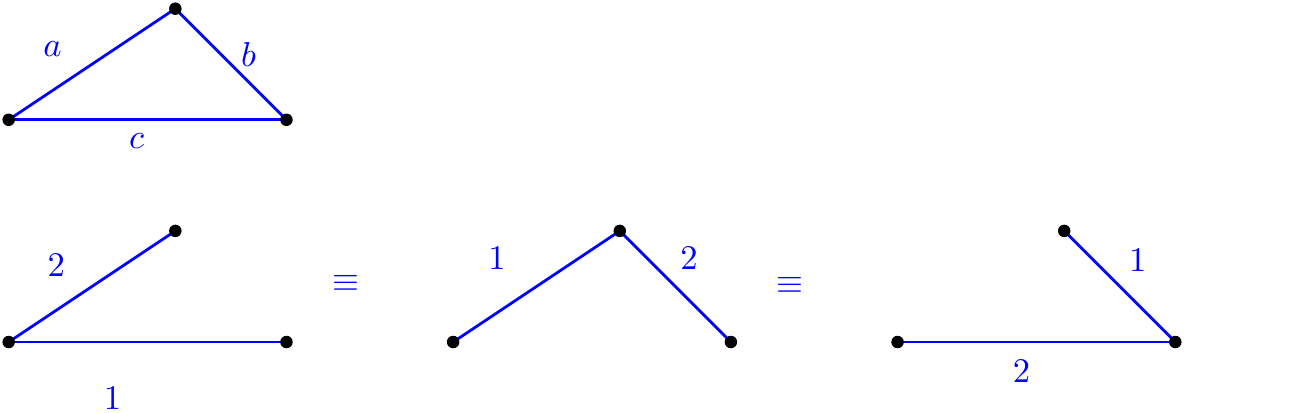}
\caption{The local picture of a braid factorization and two band/Hurwitz equivalent factorizations. The top triangle labels the configuration of arcs in the plane. In each of the diagrams along the bottom row, the blue arcs represent arcs along which we do braid half-twists and the numbering indicates the order in which the braid half-twists occur. So along the bottom, from left to right we have the local subwords of $ca= a(b) a$, $a b$, and $b b(a) = bc$. The content of Lemma~\ref{lm:hurwitz} is that all three of these factorizations are Hurwitz equivalent.}
\label{fig:braidhurwitz}
\end{center}
\end{figure}

\subsection{\texorpdfstring{{\boldmath $\phi_1^2$}}{ϕ1\textasciicircum2}} \label{sec:phi12}
There are only a few polynomials that we study where the hypersurface in $\CC^3$ is smooth. To illustrate how we decompose more complicated singularities, we start with the representative case of  $y^2 - (x^5 + t^2)$.

We will show how the fibration associated to the resolution of $y^2 - (x^5 + t^2)$ splits into two copies of $\phi_1$ so that the resolution $X_f$ admits the Lefschetz fibration associated to the word $\phi_1^2 = (\tau_1 \tau_2 \tau_3 \tau_4)^2$ where $\phi_1$ is the factorization found in Section \ref{sec:phi1}. We need to argue that this is actually a deformation of the resolution, so that $X$ is deformation equivalent to the Lefschetz fibration given by the positive word $\phi_1^2 = (\tau_1 \tau_2 \tau_3 \tau_4)^2$. This will be accomplished by checking Laufer's criterion for weak simultaneous resolutions, i.e., that $K^2$ is constant. First we point out some observations:

\begin{itemize}
    \item The singular variety $V(f)$ is the standard $A_4$ Du Val singularity and so its resolution $X_f = X$ is a plumbing of $-2$ spheres plumbed along a type $A_4$ Dynkin diagram. This manifold is negative definite with $b_2 = 4$.
    \item The deformation $f_s = y^2 - (x^5 + t(t-s))$ splits the singularity of $f=f_0$ into two singularities, each isomorphic to $y^2 = x^5 + t$ (i.e., of type $\phi_1$), with one at $(t,x,y) = (0,0,0)$ and one at $(s,0,0)$. 
    \item The specific local models for the two singularities are $y^2 = x^5 - t$ at $t=0$ and $y^2 = x^5 + t$ at $t=s$, so this yields a smooth surface $X_s$. The latter singularity can be identified with the factorization $\phi_1$ used for $x^5 + t$ just as in the previous case, but the former is different. We identify with the model for $x^5 + t$ the original configuration by $t \rightarrow -t$, and measuring relative to the reference point $t = -1$ along an arc along the negative real axis. If we choose a path from $t=1$ to $t=-1$ we can pull this factorization back to our original basepoint at $t=1$. If we take that path to lie on the unit circle and with negative imaginary coordinate, then the identification of the fiber at $t=1$ with the fiber at $t = -1$ is by a clockwise rotation of the fiber through $2\pi/5$, and so pulling the factorization back, we get the Lefschetz fibration associated to the factorization of the monodromy as $\phi_1 \phi_1(\phi_1)$. However, as was demonstrated in Lemma~\ref{lm:phi1rotate}, this factorization is Hurwitz equivalent to the easier, $\phi_1^2$.
    
    \item Each of $X_s$ is smooth, and is worth noting that (redundant to later observations) a checking of the handle decomposition of $X_s$ coming from the Lefschetz fibration $\phi_1^2$, each $X_s$ is diffeomorphic to a plumbing of $-2$ spheres along the $A_4$ Dynkin diagram. 

    \item Each $X_s$ is a Lefschetz fibration with boundary an open book on $Y = \bdry X_s$, independent of $s$.
    \item Each $X_s$ is pseudo convex with boundary a plane field isotopic to the contact structure $\xi$ carried by this open book.
    \item For each $X_s$ we have $$K(X_s)^2=c_1^2(X_s) = 4 d_3(\xi)+2\chi(X_s) + 3 \sigma(X_s)$$
    where $\chi(X_s)$ is the Euler characteristic and $\sigma(X_s)$ is the signature, and $d_3$ is Gompf's 3-dimensional invariant of the plane field $\xi$ \cite{Gompf}.
    \item As $X_s$ and $X_0$ are diffeomorphic, they have the same Euler characteristic and signature, so $K(X_s)^2$ is constant.
\end{itemize}

So the final observation is that both all of $X_0$ and the nearby fibers $X_s$ have the same values of $K^2$ so Laufer's theorem applies and (after a possible finite base change) the family is flat. Hence the deformation above by $X_s$ for $s$ a positive real number represents a deformation of the resolution $X_f = X_0$ and all nearby fibers $X_s$ are diffeomorphic to $X_f$ and represent a deformation of the complex structure on $X_f$.

\section{Smaller degree base cases: Lower genus boundary twists}
\label{sec:lower genus}
While the following cases are amenable to the same kinds of arguments we give for the remaining cases discussed later, we can address them directly using easier techniques and we will additionally need these results in the later examples.

\noindent {\boldmath ${y^2= x^4 +t}$}: 
\noindent This case is the direct analogue of our introductory example of $\phi_1$ above. After deforming, this yields a the factorization $\tau_1 \tau_2 \tau_3$ of the 3 chain in the genus 1 surface with 2 boundary components. 

\noindent {\boldmath ${y^2= x^4 +t^4}$}:\label{ex:bdry44}
\noindent This case is similar. Its hyperelliptic quotient is the cone on the $(4,4)$-torus knot. The singular surface has a fibration by twice punctured tori and the monodromy is a single boundary multitwist. The resolution has $b_1 = 2$ and $b_2 = 1$ and is represented by a complex curve of genus 1. The monodromy factorization must be by a single pair of twists about homologous, essential curves whose product is the boundary multitwist. Capping the boundary components, such factorization yields a (non-minimal) closed elliptic fibration with two singular fibers, so the two Dehn twists in the factorization must be about curves which are nullhomotopic in the closed genus 1 surface. In the twice punctured torus, they then must be both homologically essential and separating and so must be boundary parallel. This gives the expected factorization into the multitwist about the two boundary components.

\noindent {\boldmath ${y^2= x(x^3 +t^3)}$}:
This is hyperelliptic with hyperelliptic quotient the cone on a union of the unknot and its $(3,3)$-torus knot satellite, braided as such in $\CC^2$, with the fibration structure given by projection onto the first factor. The double cover is again a singular surface fibered by twice-punctured tori. For the exact reasons above, the monodromy factorization must be by a pair of Dehn twists about curves that separate off a genus 1 surface and so must be the boundary multitwist.

\section{Splittings}
\label{sec:split}
In this section we will complete the proof of Theorem~\ref{thm:main2} by constructing splittings for all the remaining cases.

\subsection{\texorpdfstring{\boldmath $\phi_1^k$}{phi1k}} We start with the family $y^2 = x^5+t^k$.

To make our lives easier overall in this section, we will change our identification of the branched double cover of $\CC$ branched over the points $x^5+1 = 0$ with the surface $\Sigma_{2,1}$ just slightly. In Figure~\ref{fig:phi1skeleton}, we give the graph and ordering that will lift to the standard generators of the hyperelliptic mapping class group as shown in Figure~\ref{fig:gens}. This is related to the identification used in the case $y^2 = x^5+t$ from Section~\ref{sec:phi1} by a rotation in the plane.
\smskip

\begin{figure}
    \centering
    \includegraphics[width = 1.6in]{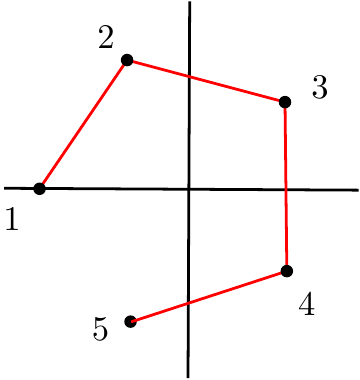}
    \caption{The skeleton that identifies the curve $y^2 = x^5+1$ over $t=1$ with the standard genus 2 surface $\Sigma_{2,1}$.}
    \label{fig:phi1skeleton}
\end{figure}
\begin{figure}
    \centering
    \includegraphics[width = 5in]{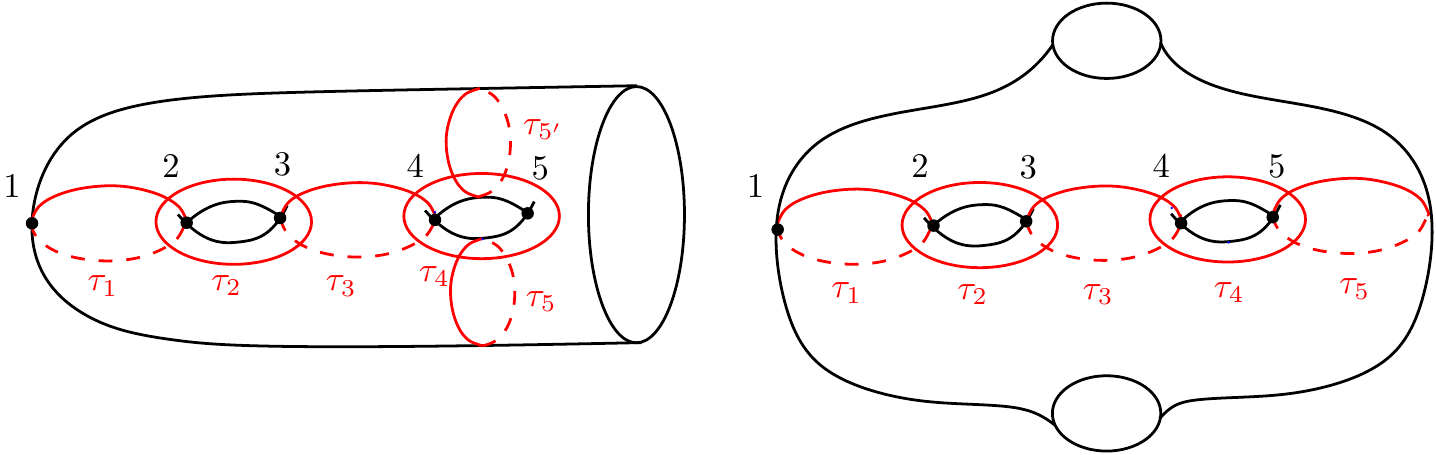}
    \caption{The arrangement of the positive Dehn twists used in the Lefschetz fibrations.}
    \label{fig:gens}
\end{figure}

\noindent \textbf{Case} {\boldmath $y^2= x^5 +t$}:
We saw in Section~\ref{sec:phi1} that $X$ deforms into the Lefschetz fibration with monodromy given by the word $\phi_1 = \tau_1 \tau_2 \tau_3 \tau_4$, but in the skeleton used in that section. To achieve this factorization for this skeleton, we just invoke Lemma~\ref{lm:phi1rotate}. 
\smskip

\noindent {\boldmath  \textbf{Case} $y^2= x^5 +t^2$}:
We saw in Section~\ref{sec:phi12} that $X$ deforms into the Lefschetz fibration with monodromy given by the word $\phi_1^2 = \left( \tau_1 \tau_2 \tau_3 \tau_4 \right)^2$. 
\smskip

\noindent {\boldmath  \textbf{Case} $y^2= x^5 +t^3$}:
First, observe that since via $t \mapsto t^3$, $V(f)$ is a 3-fold cover of $V(y^2 - (x^5 +t))$ branched over the fiber $t=0$, so the monodromy of the associated open book is $\phi_1^3$. We'll show that indeed this fibration splits into a Lefschetz fibration associated to the positive word $\left( \tau_1 \tau_2 \tau_3 \tau_4 \right)^3.$ 

Using Laufer, Theorem~\ref{thm:laufer}, we'll see that the deformation $$y^2= x^5 +t^3 + st$$ admits a very weak simultaneous resolution and so gives a splitting of the resolution $X_0$. Thus, for sufficiently small $s$, the resolutions $X_s$ and $X_0$ are isomorphic. For $s\neq 0$, we see the singularity at $t=0$ break into three singularities, each of type $y^2 = x^5 + t$, occurring at $t=0$, and $t= \pm \sqrt{s}$. For $s\neq 0$, $X_s$ is smooth and the three singular fibers each deform into copies of $\phi_1$. We could separately deform each of these into a Lefschetz fibration, and then pulling these factorizations back to the basepoint $t=1$. Choosing whichever paths yields a factorization of the form $\phi_1^{k_1}(\phi_1)\phi_1^{k_2}(\phi_1)\phi_1^{k_3}(\phi_1)$ which, by the corollary, is Hurtwitz equivalent to $\phi_1^3$. Thus the fibration deforms into the Lefschetz fibration given by the positive word $\phi_1^3 = \left( \tau_1 \tau_2 \tau_3 \tau_4 \right)^3.$ 

The resolution, $X_0$, of the original singularity is shown in Section~\ref{phi_1s} to be the negative definite $E_8$ plumbing with $b_2 = 8$ (see Figure~\ref{phi1ktable}). 
To compare, we draw the handlebody decomposition for $X_s$ corresponding to $\phi_1^3$. After cancelling each of the 1-handles with one of $\tau_i$, we get a 2-handlebody which looks similar to the diagram of an elliptic fibration as constructed by Fuller \cite{Fuller}. The intersection form is negative definite with $b_2 = 8$, This shows that $K_X^2$ is constant and hence that this deformation admits a very weak simultaneous resolution and that $X_0$ deforms into the Lefschetz fibration $X_s = \phi_1^3$.

\smskip

\noindent {\bf An interlude on negative definiteness} \label{negdef}

It will be helpful to have a single negative semi-definite Lefschetz fibration set out in advance. All the other factorizations given below are sub-fibrations of this example and we'll be able to conclude that all of our later examples are negative definite. This example fibration $I^2$ is the fibration over $D^2$, with fibers of genus 2 and 1 boundary component, associated to the positive word $$I^2 = (\tau_1 \tau_2 \tau_3 \tau_ 4 \tau_ 5 \tau_{5'} \tau_ 4 \tau_3 \tau_ 2 \tau_1)^2$$
(we will see this word $I$ again in the singularity $y^2 = x^5 +t^5$).

The corresponding closed fibration was studied by Endo and Meyer. The above is their example minus a fiber and section. Their neighborhood is a plumbing which has signature 0. Meyer calculated the signature to be -12. The boundary 3-manifold has $b_1 = 4$. In $I^2$ we can cancel all 1-handles. The remaining 16 2-handles all generate $b_2$, so $b_2^0=4$ and $b_2^-=12$ and the fibration $I^2$ is negative semi-definite. All the remaining factorizations that we will see among the cases $y^2 = x^5 +t^k$ are subwords of $I^2$, and since the boundaries of all the remaining fibrations to be discussed are rational homology spheres (save for the fibrations corresponding to a boundary twist or boundary multitwist), the corresponding Lefschetz fibrations are all negative definite.

\smskip

\noindent \textbf{Case} {\boldmath $y^2 = x^5 +t^4$}:
This seems at first to be very similar to the case $y^2= x^5 +t^3$. The link of $V(f)$ has an open book whose monodromy is $\phi_1^4$. However, the resolution of this singularity has $b_2 = 2$ and is negative definite (see Figure~\ref{phi1ktable}) and the Lefschetz fibration associated the the positive word $\left( \tau_1 \tau_2 \tau_3 \tau_4 \right)^4$ has $b_2 = 12$ and is not negative definite. Something else must be occurring.

To determine the correct Lefschetz fibration, we consider the deformation $y^2 = (x+s)x^4 + t^4$. For $s \neq 0$ this has five singularities. At $t=0$ we have a compound singularity of the form $y^2 = x^4 + t^4$ at $(0,0,0)$ (along with a smooth component approximated by $y^2 = x+s$). As we saw in Section~\ref{ex:bdry44}, the resolution of this singularity deforms into the Lefschetz fibration associated to a pair of Dehn twists that splits off a genus 1 surface with two boundary components. There are four other singularities, all of Lefschetz type. To see what they are, it's again easier to look at the branch locus in the quotient of the hyperelliptic involution: $\Delta_s :\{(x+s)x^4 + t^4 = 0\}$. Here, $\Delta_s$ is singular at $t=0$ and the braid we see looking at a small loop around $t=0$ has the four points corresponding to $x^4 + t^4$ making a full counter-clockwise circle. As we take a circle in the $t$-plane centered on the origin and increase its radius, we can watch when there is simple branching in $\Delta_s$. This occurs when the radius is again at $4(s/5)^{5/4}$ for $t$ at the four values of $t = -4 (s/5)^{(5/4)}$. Increasing $s$ splits the single branching point of $\Delta$ over $t=0$ into two points for $\Delta_s$, one at $x=-s$ of multiplicity one and the other at $x=0$ of multiplicity 4. Moving from $t=0$ to $t = 4(s/5)^{5/4}$ along the real axis, the point at $x = 0$ splits into four distinct points roughly lying in the centers of the four quadrants of $\CC$. As the angle of $t$ increases from $0$ to $2\pi$, those four points rotate past the negative real axis and collide with the point that initially corresponded to $x=-s$. These are the four points of simple branching and lying above them in $V(f_s)$ and $X_s$ are four Lefschetz singular fibers.

To keep track of where the branching occurs with the goal of producing a positive word description of the resulting Lefschetz fibration, we choose paths in the $t$-plane connecting the reference fiber $t=1$ to a place where $\Delta_s$ branches and then pull the arc of collision back. Doing this for all five singularities, we obtain the factorization shown in Figure~\ref{fig:phi4T}. Choosing a different identification of the skeleton as was used for $\phi_1$ gives us a nice and particularly useful factorization. The separating twist corresponds to the points involved in the $x^4 + t^4$ singularity over the origin and lifts to the the twists $\tau_5$ and $\tau_{5'}$ (as labeled in Figure~\ref{fig:gens}). We will refer to it by the shorthand $\phi_{\tilde{4}}$: $$\phi_{\tilde{4}} = \tau_5 \tau_{5'} \tau_4 \tau_3 \tau_2 \tau_1 .$$

\begin{figure}
    \centering
    \includegraphics[width=4.5in]{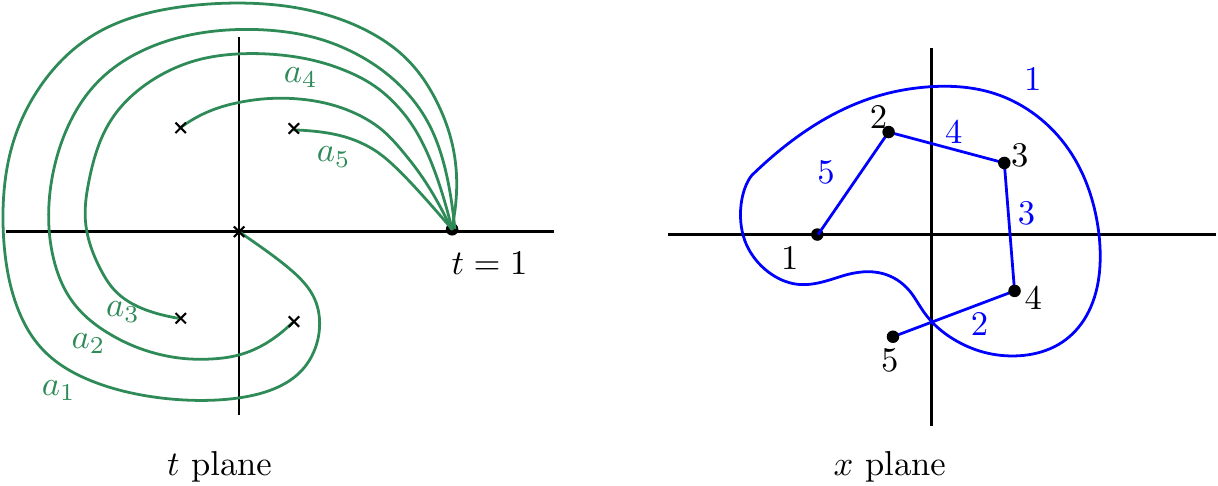}
    \caption{Paths in the $t$-plane used to find the factorization of $\phi_1^4$, along with the corresponding factorization for $\phi_{\tilde{4}}$. The singularity over $t=0$ (and corresponding to the path labeled $a_1$), is a full twist about the enclosed points and lifts to a pair of Dehn twists that separate off a genus one surface with two boundary components.}
    \label{fig:phi4T}
\end{figure}

This word has length 6. The corresponding Lefschetz fibration yields a manifold with $b_2 =2$ and which is negative definite, and so $K^2$ agrees with that of the resolution. (See \ref{negdef}.) By Laufer, this deformation yields a simultaneous resolution and so this Lefschetz fibration is a deformation of the fibration on $y^2 = x^5 +t^4$.

\smskip

\noindent \textbf{Case} {\boldmath $y^2= x^5 +t^5$}:
This is another important case but one which actually reduces to previous work. The monodromy of the open book at the boundary \emph{is} the hyperelliptic involution on the genus 2 surface with one boundary component. The branch curve $x^5 + t^5$ is the cone on the $(5,5)$ torus knot, the 5-braid with a full twist. We'll see that the resolution is the Lefschetz fibration associated to the word $$I =\tau_1 \tau_2 \tau_3 \tau_4 \tau_5 \tau_{5'} \tau_4 \tau_3 \tau_2 \tau_1 $$

This splitting is rather straightforward. Deform $y^2= x^5 +t^5$ to $y^2= x^5 +t^4(t+s)$. This has two singularities, one of type $\phi_{\tilde{4}}$ over $t=0$ and one of type $\phi_1$ over $t = -s$. We choose paths in the plane from $t=1$ to the singularities so that the path to $t=-s$ is to the right and the path to $t=0$ lies one the real line. In order to invoke this particular factorization, we choose to use the same identifications of the fiber over $t=1$ with the frame for $\phi_1^4 = \phi_{\tilde{4}}$, so that the monodromy splits directly into a product of first a conjugate of $\phi_1$ and second $\phi_{\tilde{4}}$. The particular conjugate of $\phi_1$ is just a counterclockwise rotation by $4\pi/5$ and so by Lemma~\ref{lm:phi1rotate} is Hurwitz equivalent to the standard factorization but written on the new skeleton. This yields the factorization:
$$\phi_1^5 = \phi_1 \phi_{\tilde{4}} = I = \tau_1 \tau_2 \tau_3 \tau_4 \tau_5 \tau_{5'} \tau_4 \tau_3 \tau_2 \tau_1.$$

The word $\tau_1 \tau_2 \tau_3 \tau_4 \tau_5 \tau_{5'} \tau_4 \tau_3 \tau_2 \tau_1 $ is negative definite with $b_2 = 6$. This agrees with the resolution given in Section~\ref{phi_1s} and so this admits a simultaneous resolution and the Lefschetz fibration given by the positive word $I$ is a deformation of the fibration on the resolution.

\smskip

\noindent \textbf{Case} {\boldmath $y^2= x^5 +t^6$}:
Just as in the previous case, this splits via $y^2= x^5 +t^5 (t+s)$ into a type $I$ singularity at $t=0$ and type $\phi_1$ singularity at $t=-s$. As before, something strange can occur with the identification of the singularity at $t=-s$ which can yield a term of the form $\phi_1^k(\phi_1)$, but this is Hurwitz equivalent to $\phi_1$. This yields the factorization $\phi_1^6 = \phi_1 I$. The Lefschetz fibration corresponding to the positive word $\phi_1 I$ is negative definite with $b_2=10$. This agrees with the resolution given in Section~\ref{phi_1s} and so this admits a simultaneous resolution and the Lefschetz fibration given by the positive word $$\phi_1^6 = \phi_1 I$$ is a deformation of the fibration on the resolution.

\smskip

\begin{lemma}\label{lm:phi4Trotate} The factorization $\phi_{\tilde{4}} = \tau_5 \tau_{5'} \tau_4 \tau_3 \tau_2 \tau_1$ is Hurwitz equivalent to all other factorizations obtained by rotating the corresponding braid factorization by some multiple of $2\pi/5$ and lifting via the branched double cover. More explicitly, $\phi_1^k(\phi_{\tilde{4}})$ is Hurwitz equivalent to $\phi_{\tilde{4}}$.
\end{lemma}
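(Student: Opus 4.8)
The plan is to mirror the proof of Lemma~\ref{lm:phi1rotate}, leading with the solitary-singularity viewpoint and giving the direct band-move argument as an alternative. The conceptual heart is that $\phi_{\tilde{4}}$ is the positive factorization attached to the \emph{single} isolated singularity $y^2 = x^5 + t^4$, whose total monodromy is $\phi_1^4$. Indeed, from the case $y^2 = x^5 + t^5$ we have $\phi_1 \phi_{\tilde{4}} = I = \phi_1^5$, so the product of the word $\phi_{\tilde{4}}$ is $\phi_1^4$. As in Lemma~\ref{lm:phi1rotate}, conjugating an entire factorization by its own total monodromy is realized geometrically by concatenating the identification paths with a full counterclockwise loop enclosing all of the singular values; this leaves the Lefschetz fibration unchanged and hence produces a Hurwitz-equivalent factorization. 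Concretely, this shows that $\phi_1^4(\phi_{\tilde{4}})$ is Hurwitz equivalent to $\phi_{\tilde{4}}$, where $\phi_1^4$ acts on the braid picture as the rigid rotation representing the total monodromy.

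Next I would check that this single rotation already generates all the others. Since conjugation respects Hurwitz equivalence (apply the same homeomorphism to both factorizations), iterating the previous step gives that $\phi_1^{4m}(\phi_{\tilde{4}})$ is Hurwitz equivalent to $\phi_{\tilde{4}}$ for every $m$. By Corollary~\ref{cor:1} we have $\phi_1^{10} = \tau_\bdry$, and $\phi_1^5 = I$ is the hyperelliptic involution, which is central in the hyperelliptic subgroup and therefore acts trivially by conjugation; hence the conjugation action of $\phi_1^j$ depends only on $j \bmod 5$. Because $\gcd(4,5)=1$, the exponents $4m \bmod 5$ range over every residue, so the family $\{\phi_1^{4m}\}$ realizes every rotation $\phi_1^k$ by a multiple of $2\pi/5$. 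This yields $\phi_1^k(\phi_{\tilde{4}}) \sim \phi_{\tilde{4}}$ for all $k$, which is exactly the asserted statement.

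Alternatively one can give a direct proof in the spirit of Lemma~\ref{lm:phi1rotate}: the five branch points again occupy the vertices of a regular pentagon, and one repeatedly applies Lemma~\ref{lm:hurwitz} to slide the bands of the factorization around the pentagon, advancing the ``empty edge'' by one step and so realizing the $2\pi/5$ rotation on the arcs $q_i$ before lifting through the branched double cover. The main obstacle here is the separating piece $\tau_5 \tau_{5'}$, which, unlike the simple half-twists $\tau_1,\dots,\tau_4$, does not arise from a single band: in the branch locus it is the full twist on the four points of the $x^4 + t^4$ sub-singularity enclosed by the path $a_1$ of Figure~\ref{fig:phi4T}. The delicate point is verifying that this compound, rotationally symmetric piece slides correctly past the simple bands under Lemma~\ref{lm:hurwitz}, equivalently that the $2\pi/5$ rotation carries the enclosed four-point region to itself compatibly with the chosen arcs. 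I expect this is where essentially all the care is required, which is precisely why I would present the solitary-singularity argument first: there the whole factorization is rotated at once and the separating piece needs no special handling.
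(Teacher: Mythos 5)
Your proposal is correct, but its center of gravity sits in a different place than the paper's. The paper opens its proof with exactly your conceptual observation --- ``this really follows from the equality of the monodromies $\phi_{\tilde{4}} = \phi_1^4$ and the fact that $I = \phi_1^5$ is the hyperelliptic involution and so commutes with everything'' --- but then declines to execute it and instead writes out the direct diagrammatic argument (Figure~\ref{fig:phi4Trotate}): conjugate the half-twists past the full twist, slide the first arc around the pentagon via Lemma~\ref{lm:hurwitz}, then conjugate the full twist back past the product of half-twists. You do the opposite: you execute the conceptual argument in full and only sketch the diagrammatic one. Your elaboration is complete and sound --- conjugating a factorization by its own total monodromy is realized by Hurwitz moves, iteration gives $\phi_1^{4m}(\phi_{\tilde{4}}) \sim \phi_{\tilde{4}}$, centrality of $I=\phi_1^5$ (which preserves each symmetric curve in the word, so acts trivially on the factorization) reduces the exponent mod $5$, and $\gcd(4,5)=1$ supplies every residue; this last step is left implicit in the paper and is a worthwhile addition. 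What the paper's executed proof buys, and what your sketch correctly identifies as the delicate point, is an explicit handling of the separating piece: the full twist on the four enclosed points is treated as a single unit and conjugated past the simple bands, giving a concrete sequence of moves that the paper can reuse when composing factorizations in the $t^8$ and $t^9$ cases. Your version is cleaner but rests entirely on the global-rotation principle; either is acceptable here.
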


\begin{proof} As before, this really follows from the equality of the monodromies $\phi_{\tilde{4}} = \phi_1^4$ and the fact that $I = \phi_1^5$ is the hyperelliptic involution and so commutes with everything, but we give a direct proof using Hurwitz moves. A diagrammatic description is shown in Figure~\ref{fig:phi4Trotate}. First we conjugate all the braid half-twists by the Dehn twist about the closed curve. Then we apply Hurwitz moves, sliding the first arc past the remaining arcs until it again lies on the pentagon. Finally, since the product of all the half-twists is the counterclockwise rotation by $2 \pi/5$, we conjugate the Dehn twist by this product of half twists to get the final and desired factorization. 
\end{proof}

\begin{figure}
    \centering
    \includegraphics[width = 6in]{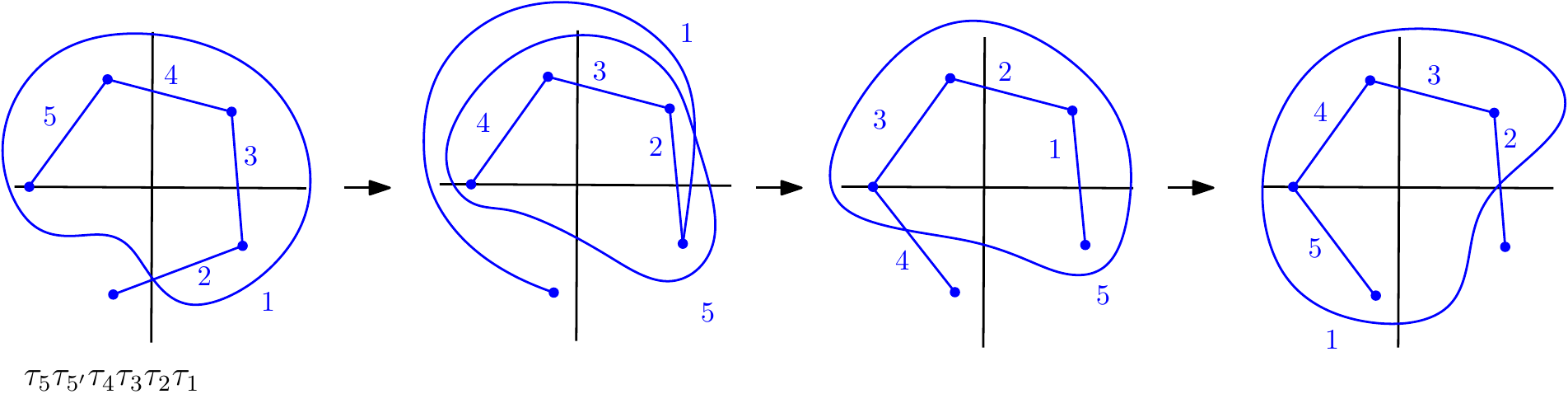}
    \caption{The sequence of Hurwitz moves that relates $\phi_{\tilde{4}}$ and $\phi_1(\phi_{\tilde{4}})$
    (as drawn in the quotient plane). First we conjugate things by the Dehn twist indicated by the closed curve. Then we move the first arc past the rest until it again lies on the pentagon. Lastly we conjugate the Dehn twist past all the half twists. The labeling of the curves indicates their order in the factorization.}
    \label{fig:phi4Trotate}
\end{figure}

\noindent \textbf{Case} {\boldmath$y^2= x^5 +t^8$}:
This splitting is again similar to previous cases. Via the deformation $y^2= x^5 +t^4 (t-is)^4$, this splits into two type $\phi_{\tilde{4}}$ singularities, one at $t=0$ and one at $t=is$. The identification of the fiber 
at $t = i s$ with the original $x^5 + t^4$ singularity is by some rigid rotation in the plane and so this gives us the word $\phi_1^k(\phi_{\tilde{4}}) \phi_{\tilde{4}}$ but by the previous lemma, this is Hurwitz equivalent to the factorization $\phi_{\tilde{4}}^2$. The Lefschetz fibration corresponding to the positive word $\phi_{\tilde{4}}^2$ is negative definite with $b_2=8$. This agrees with the resolution given in Section~\ref{phi_1s} and so this admits a simultaneous resolution and the Lefschetz fibration given by the positive word $\phi_{\tilde{4}}^2=(\tau_5 \tau_{5'} \tau_4 \tau_3 \tau_2 \tau_1 )^2$ is a deformation of the fibration on the resolution.

\smskip

\noindent \textbf{Case} {\boldmath$y^2= x^5 +t^9$}:
This splits via the deformation $y^2= x^5 +t^5 (t-is)^4$. There is one singularity of type $I$ at $t=0$ and one of type $\phi_{\tilde{4}}$ at $t=is$. The singularity at $t=is$ 
has the form $y^2 = x^5 + (-is)^5 (t-is)^4$, which is again rotation of the factorization  $\phi_{\tilde{4}}$. This splitting yields a factorization of the form $\phi_1^k(\phi_{\tilde{4}}) I$, which by Lemma~\ref{lm:phi4Trotate} is Hurwitz equivalent to $\phi_{\tilde{4}} I$. The Lefschetz fibration corresponding to the positive word $\phi_{\tilde{4}} I $ is negative definite with $b_2=12$. This agrees with the resolution given in Section~\ref{phi_1s} and so this admits a simultaneous resolution and the Lefschetz fibration given by the positive word $\phi_{\tilde{4}} I $ is a deformation of the fibration on the resolution.

\noindent \textbf{Case} {\boldmath$y^2= x^5 +t^{10}$}:
In the resolution of $\phi_1^{10}$, if we dropped the arrow, at the end of the successive blow-downs we obtain a genus 2 surface of self intersection $-1$ (see the last row in Figure \ref{phi1ktable}). The resolution of this singularity then has $b_1 = 4$ and $b_2 = 1$, and so must be given by a single separating Dehn twist. The monodromy is a boundary twist, $\phi_1^{10} = \tau_{\bdry}$, and so the factorization must be a boundary twist as well.

\subsection{\texorpdfstring{\boldmath $\phi_3^k$}{phi3k}}
This is the family $y^2=x^6+t^k$.

\smskip

\noindent \textbf{Case} {\boldmath $y^2=x^6+t$}: This case is analogous, indeed nearly identical, to the $\phi_1$ case. We split this via the deformation $y^2 = x^6+sx+t$.  Duplicating the picture for $\phi_1$, one degree larger, gives the braid description for $\phi_3$ of $a_1a_2a_3a_4a_5$. Then the positive word for the Lefschetz fibration is $\phi_3 = \tau_1 \tau_2 \tau_3 \tau_4 \tau_5$ with the skeleton shown in Figure~\ref{fig:phi3}. We note that, just as in the many of the other cases, the total braid monodromy of this surface is a counterclockwise rotation of $2\pi/6$ and so by changing the reference arc to the singularity, we can transform the skeleton by any power of this rotation and this gives a Hurwitz equivalent positive word.
\smskip

\begin{lemma} \label{lem:phi3rotate} The factorization $\phi_3 = \tau_1 \tau_2 \tau_3 \tau_4 \tau_5$ is Hurwitz equivalent to all other factorizations of $\phi_3$ given by rotating the braid factorization in Figure~\ref{fig:phi3} by some multiple of $2\pi/6$ and lifting via the branched double cover. More explicitly, since the monodromy $\phi_3$ is represented by this rigid rotation by $2\pi/6$, we have that $\phi_3^k (\phi_3)$ is Hurwitz equivalent to $\phi_3$.
\end{lemma}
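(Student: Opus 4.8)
The plan is to carry over verbatim the structure of the proof of Lemma~\ref{lm:phi1rotate}, replacing the pentagon of five branch points of $y^2 = x^5$ by the hexagon of six branch points of $y^2 = x^6$. As there, I would give both a conceptual argument and a direct one, since the conceptual one explains \emph{why} the statement is forced and the direct one certifies that the equivalence can be realized by Hurwitz moves alone.

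For the conceptual argument, I would observe that the total braid monodromy obtained by encircling all the singular values of the splitting deformation $y^2 = x^6 + sx + t$ is exactly the rigid counterclockwise rotation of the six marked points through $2\pi/6$, which lifts via the double branched cover to $\phi_3$ itself. Conjugating a factorization by the total monodromy amounts to concatenating each of the identification paths with a full counterclockwise loop enclosing all the singular values, an operation that preserves Hurwitz equivalence. Since the total monodromy is here $\phi_3$, this conjugation carries $\phi_3$ to $\phi_3^k(\phi_3)$, so $\phi_3^k(\phi_3)$ is Hurwitz equivalent to $\phi_3$.

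For the direct argument, I would apply Lemma~\ref{lm:hurwitz} repeatedly to slide the first band of the quasipositive factorization of $\phi_3$ around the hexagon. Because the factorization $\tau_1\tau_2\tau_3\tau_4\tau_5$ has only five half-twists while the branch points form a hexagon, there is exactly one empty edge; sliding the leading half-twist around to that empty edge realizes the rotation by $2\pi/6$ at the level of the braid factorization, and lifting via the branched double cover gives the stated Dehn-twist equivalence. Iterating then handles every multiple of $2\pi/6$, matching the braid factorizations obtained by rotating the picture in Figure~\ref{fig:phi3}.

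I do not anticipate a genuine obstacle: the argument is structurally identical to the $\phi_1$ case, with the single empty edge of the hexagon playing the role of the empty edge of the pentagon. The only point requiring care, as flagged in the note following Lemma~\ref{lm:phi1rotate}, is to keep the equivalence in the strongest sense — using Hurwitz moves alone, with neither cyclic permutation nor global conjugation — since this lemma is later invoked to compose and decompose the factorizations of compound singularities.
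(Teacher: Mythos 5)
Your proposal is correct and matches the paper's proof, which simply states that the argument is the immediate analogue of Lemma~\ref{lm:phi1rotate}: the same conceptual conjugation-by-total-monodromy observation plus the same direct argument sliding the leading band around to the single empty edge (hexagon in place of pentagon) via Lemma~\ref{lm:hurwitz}. No further comment is needed.
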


\begin{proof} The proof is the immediate analogue of the proof of Lemma~\ref{lm:phi1rotate}.
\end{proof}


\begin{figure}
    \centering
    \includegraphics[width=4.3in]{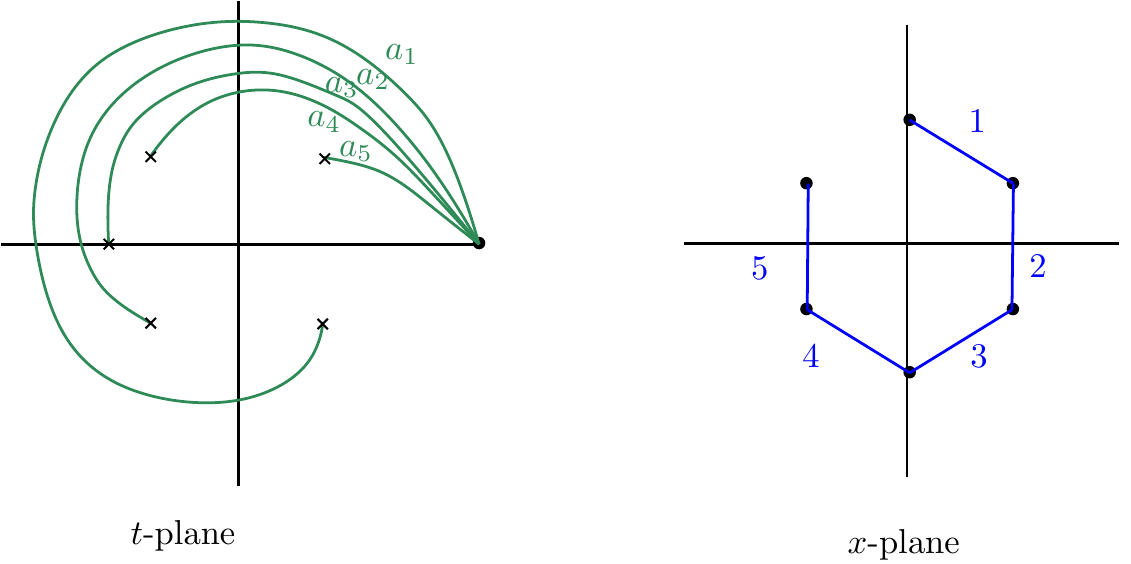}
    \caption{The positive factorization for $\phi_3$. On the left we see the paths in the $t$-plane from the reference fiber to the singular fibers. On the right, the arcs along which the corresponding braid half twists occur. The Dehn twists in the Lefschetz fibration are lifts of these braid half twists to the double cover of the disk branched over the marked points.}
    \label{fig:phi3}
\end{figure}

\noindent \textbf{Case} {\boldmath $y^2=x^6+t^2$}: 
The resolution of this singularity has $b_1 = 0$ and $b_2 = 5$. The length of the corresponding positive word is $10$, which we can achieve by splitting the singularity as $y^2=x^6+t(t+s)$, yielding two type $\phi_3$ singularities, one at $t=0$ and one at $t=-s$. The identification of the singularity at $t=0$ along the from $t=1$ to $t=0$ along the real line agrees with the work done on the previous case. The singularity at $t=-s$ along a path with positive imaginary coordinate gives us some conjugate $\phi_3^k(\phi_3)$. By Lemma~\ref{lem:phi3rotate} this is Hurwitz equivalent to our preferred factorization $$\phi_3^2 =(\tau_1 \tau_2 \tau_3 \tau_4 \tau_5)^2.$$ 
\smskip

\noindent \textbf{Case} {\boldmath $y^2=x^6+t^4$}: 
The resolution of this singularity has $b_1 = 0$ and $b_2 = 5$ so the Lefschetz fibration will correspond to a word of length 10. We split this via the deformation $(x^2-s)x^4+t^4$ and obtain one $x^4+t^4$  singularity at $(0,0)$, which in the branched cover consists of two Dehn twists that form a separating pair, and 8 other branch points. Before the deformation above, the branch locus $\Delta$ intersects the fiber above $t=1$ in the 6th roots of $-1$. The deformation takes this hexagon of points and decreases the norm of their real part while increasing the norm of their imaginary part, mildly stretching the hexagon vertically. The singularity above $t=0$ is a full twist among the four points closest to the origin, the points which do not lie on the imaginary axis. The remaining branch point singularities come in four matched pairs, each pair lying in a single $t$-fiber and having angular components $\pi/4, 3\pi/4, 5\pi/4,$ and $7\pi/4$. Around the singularity over the ray at $\pi/4$, the braiding consists of two of the four center points, the points in the first and third quadrants in the cartesian plane, braiding with the two points along the imaginary axis. As you continue in the $t$-plane in a circle around the origin, this movie repeats but after having rotated the hexagon. To get the factorization, then we choose paths outside of the disk containing the singularities and pull back the braiding along these paths. The picture is rotationally symmetric and we see the factorization comes as a list of four pairs of braid half twists (where the first and last pair end up being the same).

Tracing out the matched branching, we get the factorization of $\phi_3^4$ into 10 twists, where all curves are shown in Figure~\ref{fig:phiAmonodromy}. We then choose the same skeleton chosen for $\phi_3$ to align our image with the standard generators to get the factorization:

$$\phi_3^4 = \phi_A = \tau_1 \tau_4 \tau_3 \tau_{a_1} \tau_2 \tau_5 \tau_1 \tau_4 \tau_{b_1} \tau_{b_1'}$$

where the curves $a_1$, $b_1$, and $b_1'$ on the genus 2 surface are shown in Figure~\ref{fig:phiAmonodromy}.

\smskip

\begin{figure}
    \centering
    \includegraphics[width=2in]{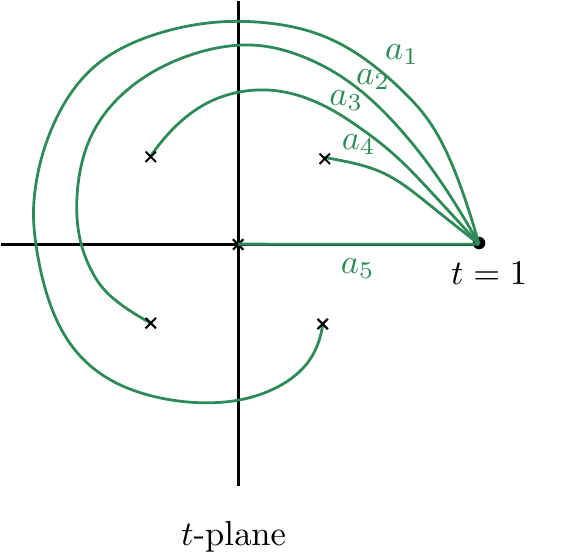} \\
    \includegraphics[width=6in]{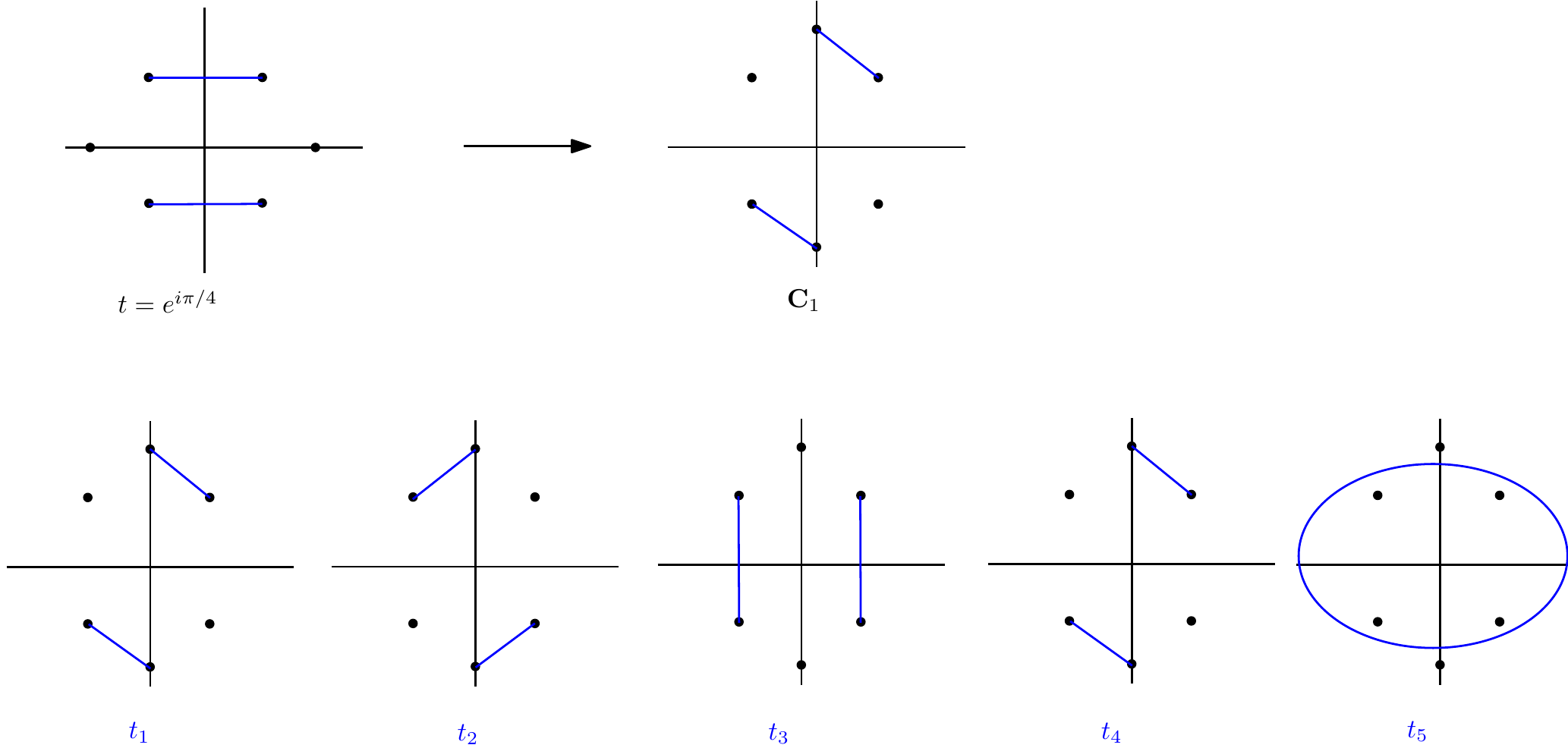} \\
    \vspace{0.2in}
    \includegraphics[width = 4.5in]{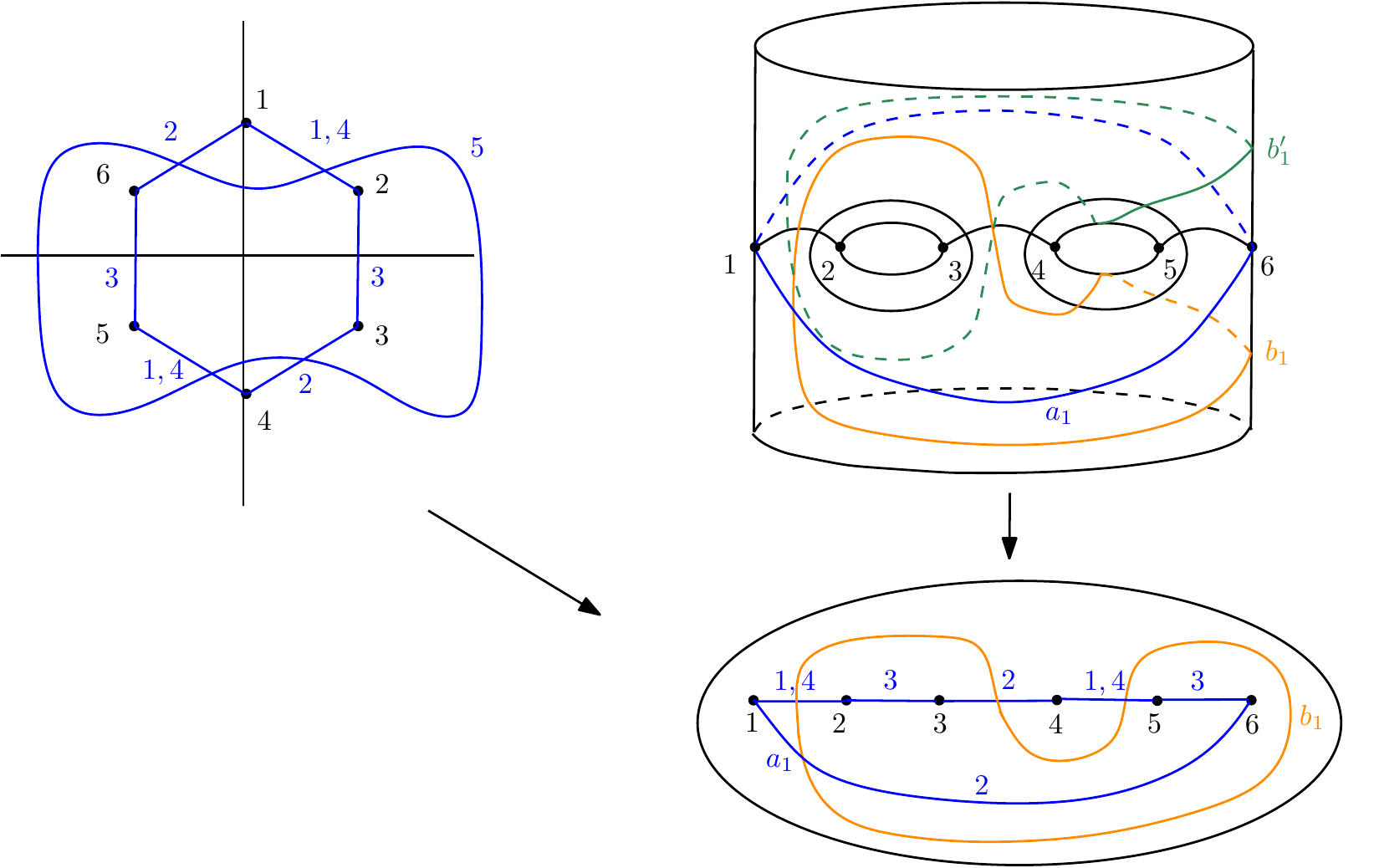}

    \caption{The positive factorization $\phi_A$ for $\phi_3^4$. The factorization consist of 4 pairs of twists, each pair along opposite sides of the hexagon, along with a pair of twists that separate off a genus 1 subsurface. We draw the picture of the branch locus for the hyperelliptic quotient and then lift the factorization to the genus 2 surface.}
    \label{fig:phiAmonodromy}
\end{figure}

\noindent \textbf{Case} {\boldmath $y^2=x^6+t^5$}:

The resolution of this singularity has $b_1 = 0$ and $b_2 = 10$ so the Lefschetz fibration will correspond to a word of length 15. We split this via the deformation $x^6+t^4(t+s)$ and obtain two singularities, one of type $\phi_3=x^6 + t$ at $t=-s$ and one of type $\phi_3^4 = x^t + t^4$ at $t=0$. We choose paths so that the path from $t=1$ to $t=0$ lies on the real axis, and the path to $t = -s$ lies to the right. The singularity at $t = -s$ agrees with $\phi_3$ after rotation by some power of $\phi_3$, and pulling back also changes this identification by some power of $\phi_3$, so this gives us the positive word $\phi_3^k(\phi_3) \phi_A$. By Lemma~\ref{lem:phi3rotate}, this is then Hurwitz equivalent to the word
$$\phi_3^5 = \phi_3 \phi_A.$$

\smskip

\noindent \textbf{Case} {\boldmath$y^2= x^6 +t^{6}$}:
In the resolution of $\phi_3^{6}$, if we dropped the two arrows, at the end of the successive blow-downs we obtain a genus 2 surface of self intersection $-2$ (see the last row in Figure \ref{phi3ktable}). The resolution of this singularity then has $b_1 = 4$ and $b_2 = 1$, and so must be given by two Dehn twists along two homologous curves. The monodromy is then the boundary multitwist, $\phi_3^{6} = \tau_{\bdry}$, and the factorization must be the boundary multitwist as well.

\smskip

\subsection{\texorpdfstring{\boldmath $\phi_2^k$}{phi2k}}

The next family we look at is $y^2 = x(x^4 + t^k)$. As before, there is a $k$-fold branched cover over the principal case of $k=1$, $y^2 = x(x^4 + t)$, branched over the singular fiber at $t=0$. We'll denote the monodromy of the principal case as $\phi_2$ and we'll show that the resolution of that singularity corresponds to the Lefschetz fibration with word $\phi_2 = (\tau_1 \tau_1 \tau_2 \tau_3 \tau_4).$ The monodromy for the open book on the link of the singularity is then just the $k$th power of $\phi_2$, $\phi_2^k$. In each case, we'll find the positive word that describes the Lefschetz fibration on the resolution.
For this family, the fiber over $t=1$ is the curve $y^2 = x(x^4+1)$, which is hyperelliptic and the branched double cover of $\CC$ branched over the points $x(x^4+1)=0$. We choose to identify this curve with the standard curve shown in Figure~\ref{fig:gens} using the skeleton shown in Figure~\ref{fig:phi2skeleton}.

\smskip
\begin{figure}
    \centering
    \includegraphics[width =2.3in]{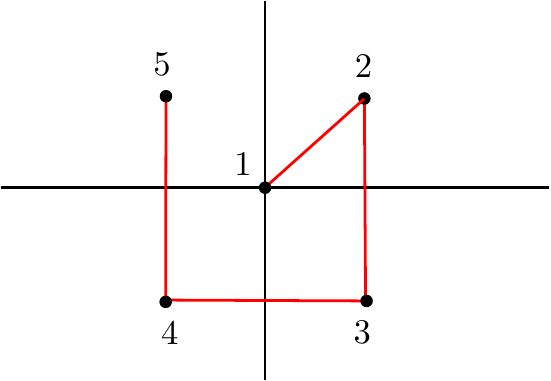}
    \caption{The identification of $y^2 = x(x^4+1)$ with the standard genus 2 surface, $\Sigma_{2,1}$, from Figure~\ref{fig:gens}.}
    \label{fig:phi2skeleton}
\end{figure}

\noindent \textbf{Case} {\boldmath $y^2= x(x^4 +t)$}:
The curve $\Delta = \{ (t,x) \big| x(x^4 +t) =0\}$ has two components, the line $x = 0$ and the quartic $x^4 + t$. These are smooth curves that intersect transversely at $(0,0)$ but the quartic component is tangent to the fiber $t=0$. 
Choosing the deformation $(x-is)(x^4+t)$ deforms $\Delta$ via $\Delta_s$ so that the intersection between the two components 
occurs transverse to the fibration and with local model $x^2 + t^2$. The double cover of $\CC^2$ branched over this singular surface is the standard double point for the duVal singularity $A_1$. Topologically this is resolved by gluing in a $-2$ sphere and the corresponding Lefschetz fibration has the two parallel Dehn twists. For $s \neq 0 $, the fibration on $X_s$ has two singular fibers, one with local model $x^4 + t$ and the other as discussed above. We saw in Section~\ref{sec:lower genus} that $x^4 + t$ splits into the Lefschetz fibration with monodromy $\tau_1 \tau_2 \tau_3$ so we just need to determine how the Dehn twists from the $A_1$ singularity and the Dehn twists from the $x^4 + t$ singularity interact.

To do this we explicitly track the monodromies of the two singularities as measured from $t=1$. The singular fibers lie at $t=0$ and $t=-s^4$. Taking the paths as shown in Figure~\ref{fig:phi2mon}, the singularity at $t = -s^4$ yields the double point with monodromy $\phi_1^3$. Traversing the singularity at $t=0$ give a counterclockwise quarter rotation among the 4th roots of $-1$. The arcs in the factorization of the singularity $x^4 + t = 0$ are identified with the generators $\tau_2$, $\tau_3$ and $\tau_4$. Together this gives a Lefschetz fibration on $X_s$ with monodromy $$\phi_2 = \tau_1\tau_1\tau_2\tau_3\tau_4.$$ Comparing with the resolution, this factorization yields a manifold with $b_1 = 0$ and $b_2 = 1$ which agrees with the resolution from  Section~\ref{sec:phi2kres}, exactly as required.
\begin{figure}
\centering \includegraphics[width=5.7in]{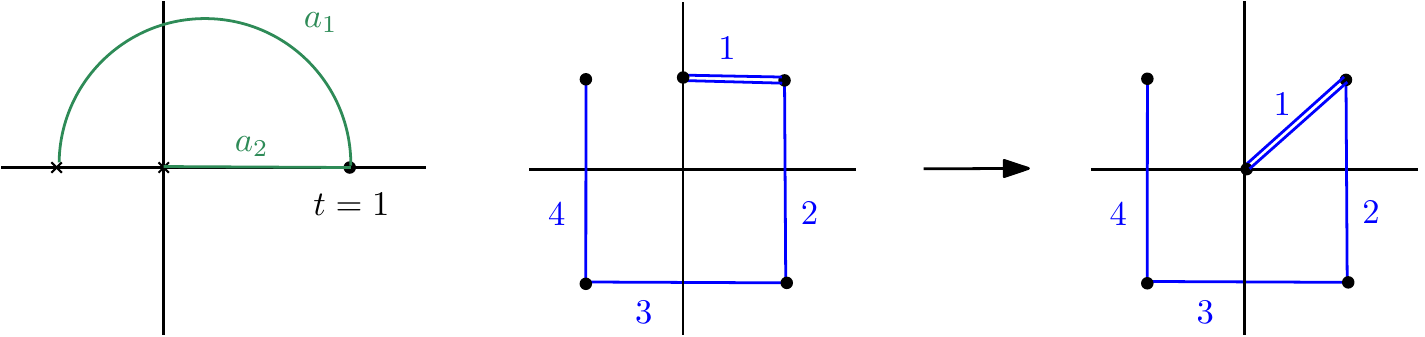}
\caption{On the left we show the two paths to the degenerate singularities in the deformation $(x-s)(x^4+t)$ in the evaluation of $\phi_2$. On the right, the two pictures that correspond to factorization of $\phi_2$ measured at $t=1$.  The left is the factorization for the deformed, with $s$ positive real, and on the right is that factorization pulled back to the fiber $t=1$ when $s=0$.} \label{fig:phi2mon}
\end{figure}

\smskip
\begin{lemma} \label{lem:phi2rotate} Just as in the case of $\phi_1$, this factorization of $\phi_2$ is Hurwitz equivalent to all of the other factorizations obtained by rotating the plane through a multiple of $2\pi/4$. Equivalently, the factorizations $\phi_2^k(\phi_2)$ and $\phi_2$ are Hurwitz equivalent. The statement also holds for the factorization $\phi_{\tilde{2}}$: $\phi_2^k(\phi_{\tilde{2}})$ and $\phi_{\tilde{2}}$ are Hurwitz equivalent.
\end{lemma}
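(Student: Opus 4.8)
The plan is to treat both assertions as the $2\pi/4$ analogues of Lemmas~\ref{lm:phi1rotate} and \ref{lm:phi4Trotate}, the only genuinely new feature being the doubled band $\tau_1\tau_1$ coming from the $A_1$ point and, in the $\phi_{\tilde{2}}$ case, the separating pair $\tau_5\tau_{5'}$. The geometric input I would record first is that the total braid monodromy of the deformed branch locus for $y^2=x(x^4+t)$, taken around a large loop enclosing all of the singular values, is a rigid counterclockwise rotation of the plane through $2\pi/4$: the four outer branch points (the fourth roots of $-1$, at the vertices of a square) permute cyclically while the central point coming from the line $x=0$ stays fixed. Lifting to the double branched cover via the skeleton of Figure~\ref{fig:phi2skeleton}, this rotation is exactly the mapping class $\phi_2$. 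Consequently, applying $\phi_2^k$ to any factorization is realized geometrically by rigidly rotating its entire braid picture through $k\cdot 2\pi/4$, so both claims reduce to showing that a single quarter turn of the braid picture produces a Hurwitz-equivalent factorization.

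For the $\phi_2$ claim I would first record the soft argument exactly as in the opening of the proof of Lemma~\ref{lm:phi1rotate}: conjugating a factorization by its own product (here $\phi_2$) amounts to pre-concatenating the arcs that identify the vanishing cycles with one full counterclockwise loop around all the singular values, which is a Hurwitz equivalence, and iterating gives $\phi_2^k(\phi_2)\sim\phi_2$ at once. To obtain the \emph{strong} form (only Hurwitz moves, no global conjugation), which is what we need when splicing this word into compound singularities later, I would instead argue directly: repeatedly apply Lemma~\ref{lm:hurwitz} to slide the bands of $\phi_2=\tau_1\tau_1\tau_2\tau_3\tau_4$ so as to advance the whole configuration by one quarter turn. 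The two parallel copies in $\tau_1\tau_1$ are twists about a single curve, so under both the rotation and the sliding moves they travel as a rigid block; I would check that a $\tau_1\tau_1$-block slides past a single band in two applications of Lemma~\ref{lm:hurwitz} and lands as the expected doubled band about the image curve.

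For the $\phi_{\tilde{2}}$ claim I would follow the proof of Lemma~\ref{lm:phi4Trotate} essentially verbatim, replacing its pentagon (rotation $2\pi/5$) by the square (rotation $2\pi/4$). Writing $\phi_{\tilde{2}}=\tau_5\tau_{5'}\tau_4\tau_3\tau_2$, I would (i) conjugate all of the braid half-twists by the Dehn twist about the separating closed curve underlying $\tau_5\tau_{5'}$; (ii) apply Hurwitz moves to slide the first arc past the remaining arcs until it again lies on the square; and (iii) conjugate the separating Dehn twist past the product of the half-twists, which is precisely the rotation through $2\pi/4$, returning the word to the form $\phi_{\tilde{2}}$ but advanced by a quarter turn. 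Iterating then realizes every $\phi_2^k(\phi_{\tilde{2}})$.

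The main obstacle I expect is bookkeeping rather than anything conceptual: ensuring that the doubled band $\tau_1\tau_1$ (respectively the separating pair $\tau_5\tau_{5'}$) is carried coherently through the sequence of Hurwitz moves and returns to the same unordered pair of parallel (respectively separating) twists after a quarter turn, and that the presence of the fixed central branch point — which makes the five-point configuration a square-with-center rather than the regular pentagon of the $\phi_1$ case — does not obstruct sliding the bands around. This is cleanest to verify diagrammatically, so I would accompany the argument with a figure analogous to Figure~\ref{fig:phi4Trotate} recording the sequence of moves.
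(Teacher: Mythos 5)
Your proposal is correct and follows essentially the same route as the paper: a direct, figure-supported sequence of Hurwitz moves (via Lemma~\ref{lm:hurwitz}) that advances the braid picture by one quarter turn, with the soft conjugation-by-the-total-monodromy observation recorded separately, exactly as in Lemmas~\ref{lm:phi1rotate} and~\ref{lm:phi4Trotate}. The only difference is in bookkeeping: for $\phi_{\tilde{2}}$ the paper's Figure~\ref{fig:phi2rotate} uses a slightly different order of slides (full twist over the adjacent half-twist, then back, then across the remaining arcs) rather than your verbatim transcription of the $\phi_{\tilde{4}}$ sequence, but both realize the same quarter-turn equivalence.
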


\begin{proof} 
For the factorization of $\phi_2 = \tau_1\tau_1\tau_2\tau_3\tau_4$, we begin by conjugating the double twist in the first position past the next twist, taking us to the second image. Next sliding the curve now labeled 1 past all the other curves takes us to the third and final configuration, $\phi_2^{-1}(\phi_2)$. 

For the factorization of $\phi_{\tilde{2}} = \tau_5\tau_{5'}\tau_4\tau_3\tau_2$,  we start by sliding the full twist 1 over the twist 2. Then reversing that, sliding the halftwist now labeled 1 over the full twist now labeled 2. Then we slide that halftwist labeled 2 across 3 and 4 to arrive at the factorization $\phi_2(\phi_{\tilde{2}})$. (See Figure \ref{fig:phi2rotate}).
\end{proof}

\begin{figure}
    \centering
   \includegraphics[width = 5 in]{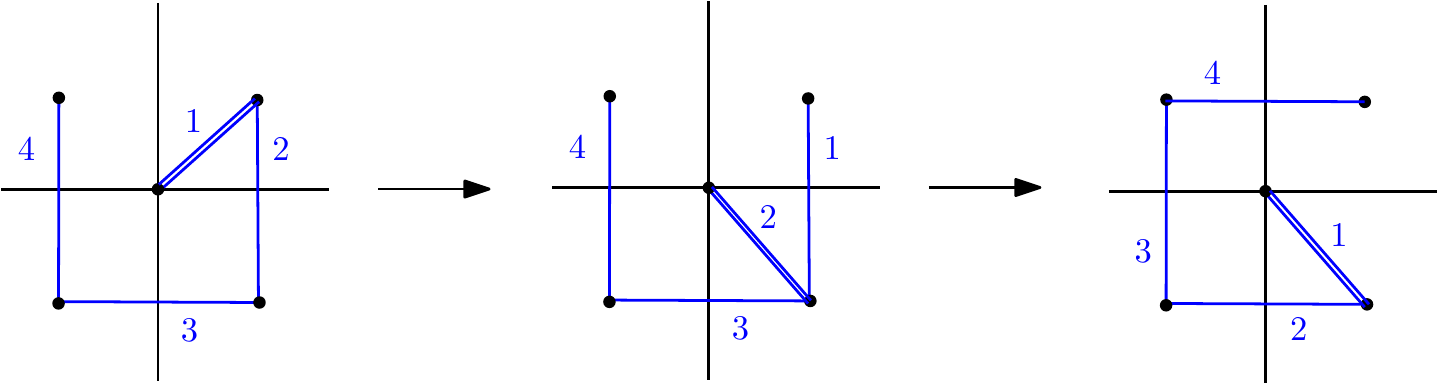}\\
    \vspace{0.2in}
    \includegraphics[width = 6 in]{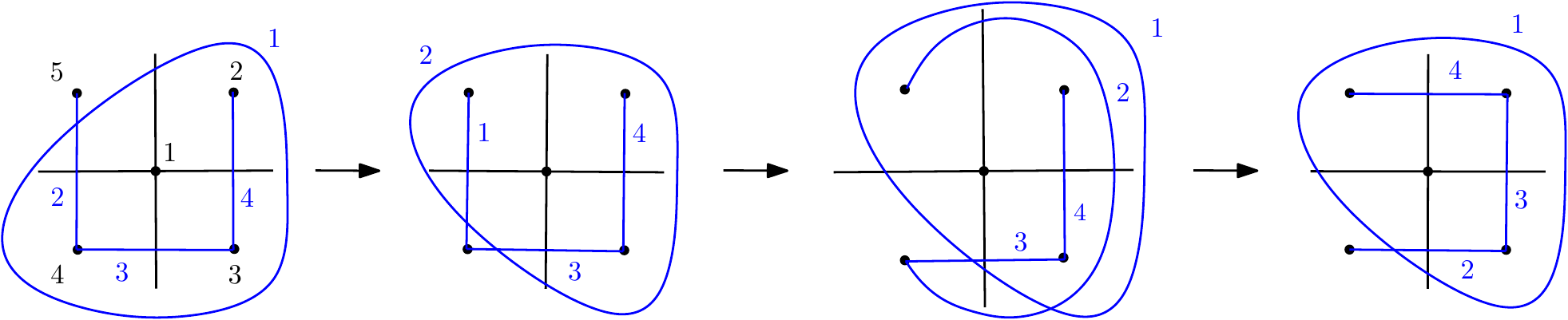}
    \caption{The sequence of Hurwitz moves that relates the factorization $\phi_2$ and $\phi_2^{-1}(\phi_2)$. Also shown is the sequence of moves that relates $\phi_{\tilde{2}}$ and $\phi_2^{-1}( \phi_{\tilde{2}})$. These moves are discussed in Lemma~\ref{lem:phi2rotate}.}
    \label{fig:phi2rotate}
\end{figure}

\noindent \textbf{Case} {\boldmath $y^2= x(x^4 +t^2)$}: 
Deforming by $x(x^4 +t(t-s))$ yields fibrations $X_s$ with two type $\phi_2$ singularities, at $t=0$ and $t=s$. Resolving these yields a surface with $b_1=0$ and $b_2 = 6$, matching the resolution of $y^2= x(x^4 +t^2)$ (see Section \ref{sec:phi2kres}). The fibration on $\tilde{X}$ therefore deforms into the Lefschetz fibration given by the positive word $\phi_2^2 = (\tau_1\tau_1\tau_2\tau_3\tau_4)^2$.

\smskip

\noindent \textbf{Case} {\boldmath $y^2= x(x^4 +t^3)$}: This resulting monodromy looks very similar to that of $\phi_1$, and indeed both are isomorphic after capping. We call the resulting factorization $\phi_{\tilde{2}}$.

Deform the branch curve $\Delta$ to $x((x-s)x^3+t^3)$. This yields fibrations on $X_s$ with four singular fibers. One singularity is of type $x(x^3 + t^3)$ located at $t=0$. As was shown in Section~\ref{sec:lower genus}, this corresponds to a Dehn twist that separates off a genus 1 subsurface. The other singularities are simple braidings. The singular fiber at $t=0$ resolves to a pair of Dehn twists along curves that separate off a genus 1 surface with two boundary components. The other three singularities occur at the third roots of $3^3/4^4 s^4$.

Tracing out the monodromy along the paths in the $t$-plane as shown in Figure~\ref{fig:phi2Tmon}, we see how these four Dehn twists are arranged relative to each other. This yields a fibration with monodromy $$\phi_{\tilde{2}} = \tau_5\tau_{5'}\tau_4\tau_3\tau_2.$$ The total space has $b_1 = 0$ and $b_2 = 1$, matching the resolution in Figure~\ref{phi2ktable}, and so this is a deformation of the resolution.


\begin{figure}
    \centering
    \includegraphics[width=4in]{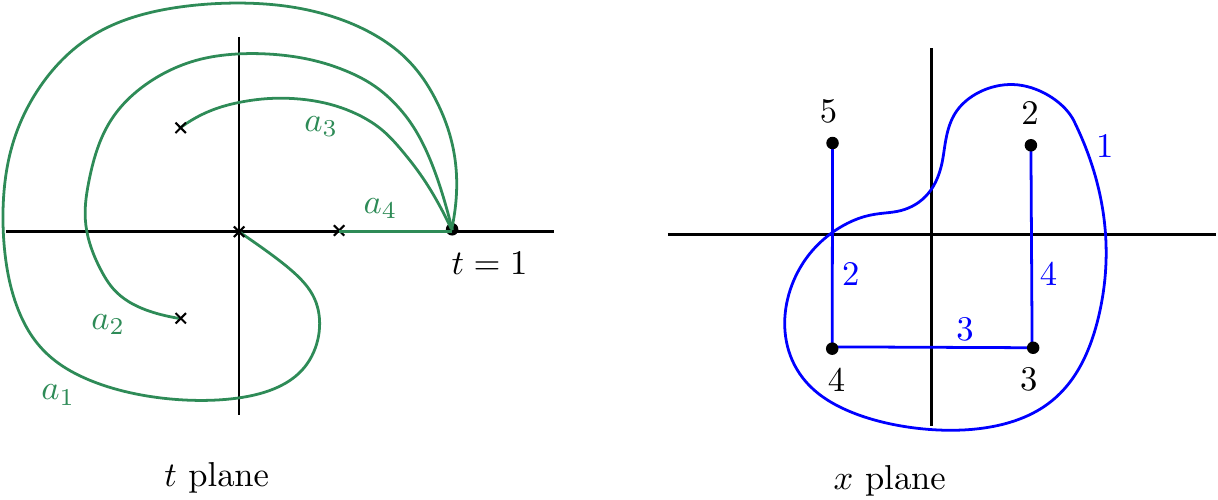}
    \caption{The monodromy of $\phi_{\tilde{2}}$ and the paths in the $t$-plane that realize it.}
    \label{fig:phi2Tmon}
\end{figure}
\smskip

\noindent \textbf{Case} {\boldmath $y^2= x(x^4 +t^4)$}:
This Lefschetz fibration turns out to be the same as $\phi_1^5 = I =\tau_1\tau_2\tau_3\tau_4\tau_5\tau_{5'}\tau_4\tau_3\tau_2\tau_1$ and the argument is very similar. We split $x(x^4 +t^4)$ into $x(x^4 +(t+s)t^3)$, with a type $\tilde{\phi_2}$ singularity ($x(x^4 +t^3)$) at $t=0$ and a type $\phi_2$ singularity ($x(x^4 +t)$) at $t = - s$ (up to some identification). Using the path from $t=1$ to $t=0$ along the real line, we can identify everything from the $\phi_{\tilde{2}}$ singularity from the $y^2 = x(x^4 + t^3)$ case without change. If we take the path with positive imaginary value to the $\phi_2$ singularity at $t= -s$, we pull back the factorization of $\phi_2$ up to conjugation by some power of $\phi_2$. By Lemma~\ref{lem:phi2rotate}, this conjugate is Hurwitz equivalent to the factorization $\phi_2$.
This gives us the factorization: $\phi_2^4 = \phi_2 \phi_{\tilde{2}} = \tau_1\tau_1\tau_2\tau_3\tau_4\tau_5\tau_{5'}\tau_4 \tau_3 \tau_2$.
To get to the desired word of $\tau_1\tau_2\tau_3\tau_4\tau_5\tau_{5'}\tau_4\tau_3\tau_2\tau_1$, we can apply Hurwitz moves to the first instance of $\tau_1$ to slide it past the rest of the word. The rest of the word is $I \tau_1^ {-1}$ and since $\tau_1$ is hyperelliptic, after conjugating we get back a Dehn twist along the same curve. Thus we have 

$$\phi_2^4 = \phi_2 \phi_{\tilde{2}} = \tau_1\tau_1\tau_2\tau_3\tau_4\tau_5\tau_{5'}\tau_4 \tau_3 \tau_2 \equiv \tau_1\tau_2\tau_3\tau_4\tau_5\tau_{5'}\tau_4 \tau_3 \tau_2\tau_1 = I.$$

This is exactly the word found for $f(x,t) = x^5 +t^5$. 

\smskip

\noindent \textbf{Case} {\boldmath $y^2= x(x^4 +t^5)$}:
The resolution of this singularity has $b_1 = 0$ and $b_2 = 11$ and so its Lefschetz fibration will have a positive word factorization of length 15. Using the deformation $y^2= x(x^4 +t^4(t+s))$, this splits into singularities of types $y^2= x(x^4 +t^4)$ at $t=0$ and $y^2= x(x^4 +(t+s))$ at $t=-s$. Choosing paths in the plane so that the path to $t=s$ is to the right and the path to $t=0$ lies on the real line, then this writes our positive factorization as $\phi_2^5 = \phi_2^k(\phi_2) I$ which is Hurwitz equivalent to $\phi_2 I$ by Lemma~\ref{lem:phi2rotate}.

\smskip

\noindent \textbf{Case} {\boldmath $y^2= x(x^4 +t^6)$}:
The resolution has $b_1 = 0$ and $b_2 = 6$, so our Lefschetz fibration will have word length 10. Splitting by $x(x^4+t^3(t-s)^3)$ yields two singularities of type $x(x^4+t^3)$, one at $t=0$ and one at $t=s$, giving a factorization $\phi_{\tilde{2}}^2$.

\smskip

\noindent \textbf{Case} {\boldmath $y^2= x(x^4 +t^7)$}:
The resolution has $b_1 = 0$ and $b_2 = 11$, so our Lefschetz fibration will have word length 15. We split via the deformation $x(x^4+t^4(t+s)^3)$ to get a singularity of type $I$ at $t=0$ and $\phi_{\tilde{2}}$ at $t=s$. We continue as in the case $y^2 = x(x^4 + t^5)$, invoking Lemma~\ref{lem:phi2rotate} to get the positive factorization $\tilde{\phi_2} I$.

\smskip

\noindent \textbf{Case} {\boldmath $y^2= x(x^4 +t^8)$}:
In the resolution of $\phi_2^{8}$ we obtain a genus 2 surface of self intersection $-1$ (see the last row in Figure \ref{phi2ktable}). The resolution of this singularity then has $b_1 = 4$ and $b_2 = 1$, and so must be given by a single separating Dehn twist. The monodromy is a boundary twist, $\phi_2^{8} = \tau_{\bdry}$, and so the factorization must be a boundary twist as well.

\smskip

\subsection{\texorpdfstring{\boldmath $\phi_4^k$}{phi4k}}
Lastly, we take the family $y^2= x(x^5 +t^k)$. In the mapping class group, the monodromy for $k=1$ can be written as $\tau_1 \tau_1 \tau_2 \tau_3 \tau_4 \tau_5$ which is a 5th root of the boundary multitwist. 

\smskip

\noindent \textbf{Case} {\boldmath $y^2= x(x^5 +t)$}: 
Similar to the $\phi_2$ case, the branch curve defined by $x(x^5 +t)$ has two components that meet at $(0,0)$. We split this singularity via the deformation $y^2 = (x+s)(x^5+t)$ which moves the point of intersection to $(-s, s^5)$ where the two components intersect transversely. After the deformation we see branching at $t = 0$ and $t = s^5$, splitting the singularity into a $x^5+t$ singularity at $t=0$ and a transverse double point at $t = s^5$. We understand each of these monodromies separately, and, as in the $\phi_2$ case, we need to trace through how these two identifications match up. For ease, we take $s$ to be a positive real perturbation and, use the arcs shown in Figure~\ref{fig:phi4}. The double point gives us a factor of $\tau_1^2$ and the $x^5 + t$ singularity at the origin splits into something which we can identify with $\phi_1$, $\tau_2 \tau_3 \tau_4 \tau_5$ using the skeleton chosen. This gives us the monodromy factorization of  $$\phi_4 = \tau_1 \tau_1 \tau_2 \tau_3 \tau_4 \tau_5.$$ This yields a total space with $b_1 = 0$ and $b_2 = 1$, which agrees with that of the resolution and so gives us a flat deformation of the resolution.

\begin{figure}
    \centering
 \includegraphics[width=3in]{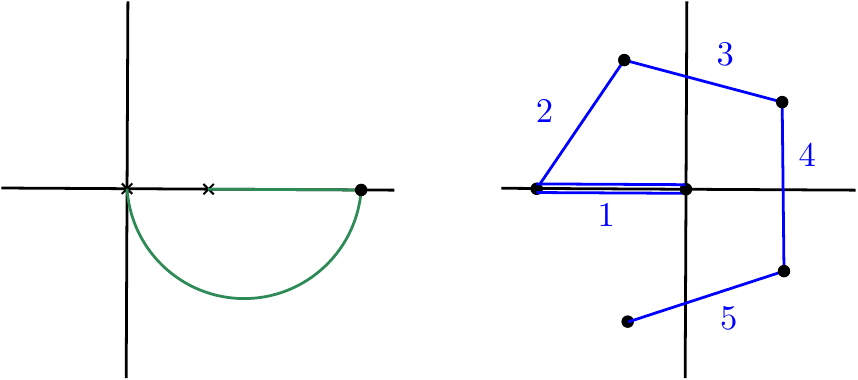} \\ 
 
 \includegraphics[width=1.5in]{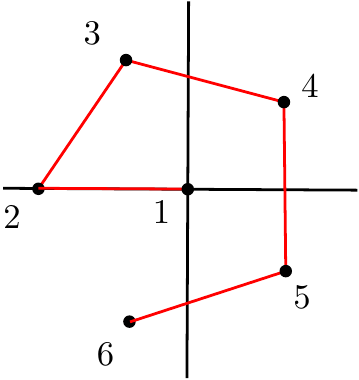}

    \caption{The top row shows the monodromy of $\phi_4$ and the paths used to identify it. The second row shows the skeleton used to identify the the fiber at $t=1$ with the standard surface $\Sigma_{2,2}$.}
    \label{fig:phi4}
\end{figure}

\smskip

\begin{lemma} \label{lem:phi4rotate} The factorization of $\phi_4 = \tau_1^2 \tau_2 \tau_3 \tau_4 \tau_5$ is Hurwitz equivalent to all of the other factorizations obtained by rotating the plane through a multiple of $2\pi/5$ and pulling back. Equivalently, the factorizations $\phi_4^k(\phi_4)$ and $\phi_4$ are Hurwitz equivalent. 
\end{lemma}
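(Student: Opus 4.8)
The plan is to follow the template of Lemma~\ref{lem:phi2rotate} essentially verbatim, since the branch configuration for $\phi_4$ has the same shape as the one for $\phi_2$. The branch locus of $x(x^5+t)$ consists of the central point $x=0$ together with the five points solving $x^5=-t$; as $t$ traverses a small counterclockwise loop about the origin, the central point is fixed and the five outer points rotate rigidly by $2\pi/5$. Thus the total monodromy $\phi_4$ is represented by a rigid rotation of the plane about the origin through $2\pi/5$ (fixing the central branch point and cyclically rotating the five outer points), lifted through the branched double cover. The conceptual argument is then identical to the one already used in Lemma~\ref{lm:phi1rotate}: conjugating the factorization by $\phi_4$ amounts to concatenating all of the identifying paths with a full counterclockwise loop enclosing every singular value, which rotates the whole picture by $2\pi/5$. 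Since a factorization and its $2\pi/5$-rotate are related precisely by this global change of anchoring arcs, they are Hurwitz equivalent, and iterating handles every multiple of $2\pi/5$.

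For a self-contained proof via Hurwitz moves, I would argue exactly as in Lemma~\ref{lem:phi2rotate}. The factorization $\phi_4 = \tau_1^2\tau_2\tau_3\tau_4\tau_5$ has the doubled twist $\tau_1^2$, coming from the transverse double point of the two branch components, in the leading position, followed by the chain $\tau_2\tau_3\tau_4\tau_5$ of half-twists among the rotating outer points. First I would conjugate the doubled twist past the next twist $\tau_2$, using Lemma~\ref{lm:hurwitz} applied to the braid in the two-fold quotient. Then I would slide the arc that now occupies the leading position past each of the remaining half-twists in turn, arriving at $\phi_4(\phi_4)$ (equivalently $\phi_4^{-1}(\phi_4)$), which is exactly the factorization obtained by rotating the braid picture through $2\pi/5$ and lifting. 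This is one slide longer than the $\phi_2$ computation, the only difference being that the outer chain here has length four rather than three. Since Lemma~\ref{lm:hurwitz} guarantees that each braid-level move lifts to a Hurwitz move on the corresponding Dehn-twist factorization in the double branched cover, the equivalence holds in $\mathrm{MCG}(\Sigma_{2,2})$ as asserted.

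The one point requiring care is the bookkeeping around the doubled twist $\tau_1^2$: unlike the single half-twists in the chain, the double point contributes two parallel twists that must be transported together, and I would need to verify that conjugating them past $\tau_2$ and then sliding the resulting (still doubled) strand across the chain returns a doubled twist in the correct new position rather than separating the two factors. This is precisely the subtlety already resolved in the $\phi_2$ case (compare Figure~\ref{fig:phi2rotate}), so I expect no genuinely new difficulty here—only a slightly longer sequence of slides—and the diagrammatic sequence can be drawn in the quotient plane in exact analogy with that figure.
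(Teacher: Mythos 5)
Your proposal matches the paper's argument: the paper's proof of this lemma is literally ``exactly analogous to the proof of the first case of Lemma~\ref{lem:phi2rotate},'' i.e.\ conjugate the doubled twist past the next twist and then slide the resulting leading arc across the rest of the chain, which is precisely the sequence of Hurwitz moves you describe (your conceptual path-concatenation remark likewise mirrors the paper's comment in Lemma~\ref{lm:phi1rotate}). The only nitpick is that $\phi_4(\phi_4)$ and $\phi_4^{-1}(\phi_4)$ are the two opposite rotations rather than literally the same factorization, but since the lemma covers all multiples of $2\pi/5$ this does not affect correctness.
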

\begin{proof} The proof for this case is exactly analogous to the proof of the first case of Lemma~\ref{lem:phi2rotate} (the case of $\phi_2$).

\end{proof}

\noindent \textbf{Case} {\boldmath $y^2= x(x^5 +t^2)$}:
Splitting this singularity via $x(x^5 +t(t+s))$ yields two singularities, each of type $\phi_4$. By Lemma~\ref{lem:phi4rotate}, this yields a monodromy of $$\phi_4^2 = (\tau_1 \tau_2 \tau_3 \tau_4 \tau_5 \tau_5)^2.$$ This has the desired values of $b_1 = 0$ and $b_2 = 7$ and so the deformation of the corresponding resolution is flat. 
\smskip

\noindent \textbf{Case} {\boldmath $y^2= x(x^5 +t^3)$}: 
This splits by the deformation $y^2 = x(x^3(x^2-s)+t^3)$. We will call the positive word associated to the Lefschetz fibration $\phi_B$. The deformation produces a fibration with three singular fibers, which lie over $t=0$ and the six points $t= (\pm(4s(3s/5)^{1/2} - 6(3s/5)^{5/2})^{1/3}$, that lie on the six rays with angles $\theta = k*\pi/3, \, k = 0, \dots 5.$ The $t=0$ fiber has a type $x(x^3 + t^3)$ singularity which gives a pair of Dehn twists that separates off a genus 1 surface. For each of the other six singularities we take the paths from $t=1$ to the singular value and pull back the braid collision to get the factorization. These collisions occur between a pair of points which collide alternately at $x = 1$ and $x=-1$. Pulling these collisions back to the reference point at $t=1$ using the arcs in the $t$-plane as shown in Figure~\ref{fig:phiBmonodromy} we get the factorization $$\phi_4^3 = \phi_B = \tau_4 \tau_{a_2} \tau_3 \tau_5 \tau_2 \tau_4 \tau_{b_2} \tau_{b_2'}$$ where the labeling of the Dehn twists corresponds to the labels shown on the quotients under the hyperelliptic involution. This is a fibration of word length $8$ which yields a total space having $b_1 = 0$ and $b_2 = 3$, as needed. 
\smskip

\begin{figure}
    \centering
    \includegraphics[width=4.7in]{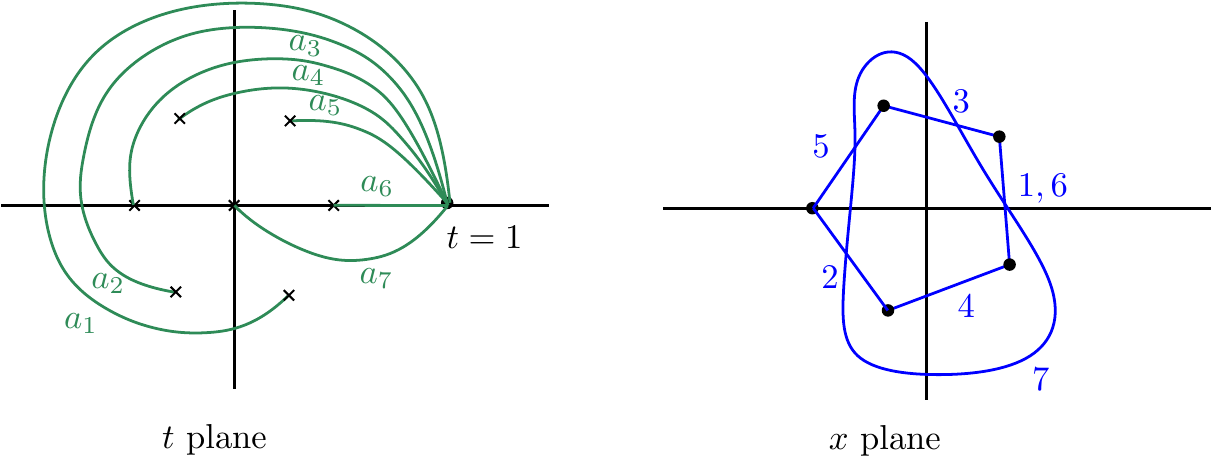} \\ 
     \vspace{0.2in}
    \includegraphics[width=2.7in]{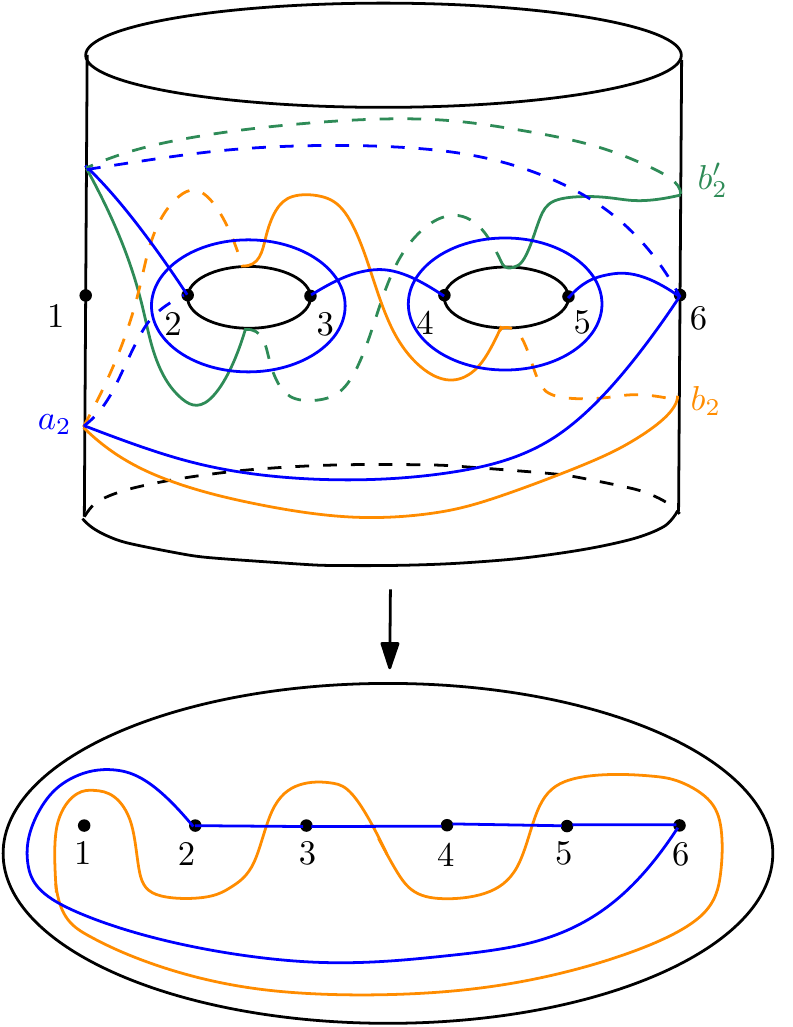}
    \caption{The the factorization $\phi_B$ of the monodromy of $\phi_4^3$.}
    \label{fig:phiBmonodromy}
\end{figure}
 
\noindent \textbf{Case} {\boldmath $y^2= x(x^5 +t^4)$}:
The resolution for this example has $b_1 = 0$ and $b_2 = 9$ and so we are looking for a factorization with word length 14. Splitting the singularity as $y^2= x(x^5 +t^3(t+s))$ yields two singularities. At $t=0$ we have the singularity of type $\phi_B$ (the previous case) of word length 8 and at $t=-s$ a singularity of type $\phi_4$ of word length 6. Choosing $s$ to be positive imaginary and choosing arcs so that the path from $t=1$ to $t=0$ lies on the real line and the path $t = - s$ is to the right, we pull the two factorizations from the two cases back to $t=1$. This gives us a positive word $\phi_3^4 = \phi_4^{k} (\phi_4) \phi_B$. By Lemma~\ref{lem:phi4rotate}, this factorization is Hurwitz equivalent to the factorization: $$\phi_4^4 = \phi_4 \phi_B = (\tau_1^2 \tau_2 \tau_3 \tau_4 \tau_5) (\tau_4 \tau_{a_2} \tau_3 \tau_5 \tau_2 \tau_4 \tau_{b_2} \tau_{b_2'}).$$
This has the correct word length and so gives a flat deformation of the resolution.

\smskip

\noindent \textbf{Case} {\boldmath $y^2= x(x^5 +t^5)$}:
In the resolution of $\phi_4^{5}$ we obtain a genus 2 surface of self intersection $-2$ (see the last row in Figure \ref{phi4ktable}). The resolution of this singularity then has $b_1 = 4$ and $b_2 = 1$, and so must be given by two Dehn twists along two homologous curves. The monodromy is then the boundary multitwist, $\phi_4^{5} = \tau_{\bdry}$, and the factorization must be the boundary multitwist as well.
\bibliographystyle{amsalpha}
\bibliography{References}

\vspace{0.3in}
\end{document}